\documentclass[towside]{article}
\usepackage[left=7mm,right=7mm]{geometry} 
\usepackage{vmargin} 
\usepackage{xr}
\externaldocument{mex-revision1-supp}
\usepackage[numbers]{natbib}
\usepackage{fancyhdr}
\usepackage{amsmath}
\usepackage{mathrsfs}

\usepackage{mathabx}

\usepackage{amsthm}

\usepackage{amsfonts}
\usepackage{amssymb}
\usepackage{graphicx}
\usepackage{placeins}
\usepackage{algorithm2e}
\usepackage{caption}
\usepackage{array}
\usepackage{multicol}
\usepackage{color}
\usepackage{bbm}
\usepackage{mhequ}
\usepackage{enumerate}
\usepackage[font={small,it}]{caption}

\usepackage{accents}
\renewcommand{\underbar}[1]{\underaccent{\bar}{#1}}





\theoremstyle{plain}

\newtheorem{theorem}{Theorem}[section]

\newtheorem{corollary}[theorem]{Corollary}

\newtheorem{lemma}{Lemma}[section]

\theoremstyle{definition}

\newcommand{\PI}{\pi} 
\renewcommand{\d}{{\text{d}}}
\newcommand{\cN}{{\mathcal{N}}}
\newcommand{\mc}[1]{\mathcal{#1}}
\newcommand{\bb}[1]{\mathbb{#1}}
\newcommand{\bs}[1]{\boldsymbol{#1}}

\newcommand\ci{\perp\!\!\!\perp}

\newcommand{\Exp}{\textnormal{Exponential}}

\newcommand{\jwtime}{\kappa_{\Delta}} 

\newcommand{\gamdy}{\gamma_\d} 

\newcommand{\THETA}{\bbR} 
\newcommand{\bbH}{{\bb H}}
\newcommand{\ck}{{\mathcal{K}}}   

\newcommand{\Fr}{\mathsf{Fr}}

\newcommand{\xst}{\xi_{s^*}}

\newcommand{\vpL}{\varphi_\Lambda}%
\newcommand{\msv}{\mathsf{MSV}}
\newcommand{\Yt}[1]{Y_{t_{#1}}(x_{#1})}
\newcommand{\distd}{\mathrel{\stackrel{\mc{D}}{=}}}
\newcommand{\iN}{_{1\le i\le n}}
\newcommand{\jJ}{_{j\in J}}

\def \be{\begin{equs}}
\def \ee{\end{equs}}
\newcommand{\Ymn}{\vee\iN Y_i(x,t)}
\newcommand{\Ystmn}{\vee\iN Y^*_i(x,t)}
\newcommand{\wij}{\omega^{(i)}_j}
\newcommand{\Ni}{\cN^{(i)}}

\DeclareMathOperator{\M}{M}
\DeclareMathOperator{\AD}{AD}
\DeclareMathOperator{\KS}{KS}
\DeclareMathOperator{\var}{var}
\newcommand{\wh}[1]{\widehat{#1}}

\newcommand{\km}{\thinspace{\text{km}}}

%
%
%


\newcommand{\bbN}{{\mathbb{N}}}

\newcommand{\bbR}{{\mathbb{R}}}

\newcommand{\cA}{{\mathcal{A}}}

\newcommand{\cH}{{\mathcal{H}}}

\newcommand{\cP}{{\mathcal{P}}}

\newcommand{\cX}{{\mathcal{X}}}

\renewcommand{\P}{{\mathsf{P}}} \newcommand{\E}{{\mathsf{E}}}

\newcommand{\Po}{\mathsf{Po}}

\newcommand{\hide}[1]{}
\newcommand{\argmax}{\mathop{\mathrm{argmax}}}
\newcommand{\argmin}{\mathop{\mathrm{argmin}}}

\newcommand{\iid}{\mathrel{\mathop{\sim}\limits^{\mathrm{iid}}}}


\newbox\asbox
\setbox\asbox=\hbox{\vrule height 15pt depth3.5pt width0pt}
\def\astrut{\relax\ifmmode\copy\strutbox\else\unhcopy\strutbox\fi}
\newdimen\bsigdep
\def\bSig{{\setbox0\hbox{$\Sigma$}\bsigdep=1\dp0\advance\bsigdep by
 .2\ht0 \rlap{\kern.3\wd0\vrule height1.2\ht0 depth1\bsigdep}\box0}}%
%

\newcommand{\ans}[1]{\hbox to #1truecm{
    \vrule height 20pt depth3.5pt width0pt
    \leaders\hrule height-2.5pt depth3pt\hfill}}


%

\newcommand{\ie}{\emph{i.e.{}}}

\newcommand{\half}{{\mathchoice{{\textstyle\frac12}} {{\textstyle\frac12}}
                {{\scriptscriptstyle\frac12}}{{\scriptscriptstyle\frac12}}}}
\newcommand{\one}{\mathbf{1}}
\newcommand{\bone}[1]{\one_{\{#1\}}}

\newcommand{\bet}[1]{\left [#1\right ]} 
\newcommand{\cet}[1]{\left (#1\right )} 
\newcommand{\set}[1]{\left\{#1\right\}} 
\newcount\ola \newcount\olb \newcount\olc \newcount\old \newcount\ole
\newcount\och\newcount\level
\DeclareMathOperator{\hr}{hr}


\ifx\newcolumntype\undefined
  \typeout{No Array, no sweat.}
\else
  \typeout{Hey, we've got Array!}
  \newcolumntype{C}{>{$}c<{$}}
  \newcolumntype{L}{>{$}l<{$}}
  \newcolumntype{R}{>{$}r<{$}}
\fi
\def\OL#1{\par\noindent\hangindent=#1\parindent 
  \kern1\hangindent\ignorespaces}%
\def\ol#1{%
    \level=#1
    \ifcase\level
    \ola=0 \olb=0 \olc=0 \old=0 \ole=0\or         
    \olb=0 \olc=0 \old=0 \ole=0 \advance\ola by 1 
    \gdef\olev{\uppercase\expandafter{\romannumeral\ola}} \or
    \olc=0 \old=0 \ole=0 \advance\olb by 1        
    \och=64 \advance\och by\olb
    \gdef\olev{\char\och}\or
    \old=0 \ole=0 \advance\olc by 1               
    \och=48 \advance\och by\olc
    \gdef\olev{\char\och}\or
    \ole=0 \advance\old by 1                      
    \och=96 \advance\och by\old
    \gdef\olev{\char\och}\or
    \advance\ole by 1                             
    \gdef\olev{\romannumeral\ole} \or
    \message{Outline depth too deep: #1}\fi
    \ifnum\level>0 \OL\level\llap{\olev.\enspace}\ignorespaces\fi}%
\long\def\comment#1/*#2*/{\endcomment}%
\def\endcomment{\relax}%
\newcounter{probno}\newcounter{partno}[probno]
\newcommand{\newpart}[1][0]{\ifnum\value{partno}=0\medskip\else\vfill\fi
  \par\stepcounter{partno}\alph{partno})
  \ifnum#1<0(XC)\fi\ifnum#1=0\quad\fi\ifnum#1>0(#1)~\fi}

\makeatletter 
\count1\time \divide\count1 60 \count2=-\count1
\multiply\count2 60 \advance\count2 \time
\edef\now{\two@digits{\the\count1}:\two@digits{\the\count2}}
\makeatother

\def\wbox#1#2#3{{\vcenter{\vbox{\hrule height.#3pt
    \hbox{\vrule width.#3pt height#1pt \kern#2pt \vrule width.#3pt}%
                            \hrule height.#3pt}}}}%
\def\Proof.{\medbreak\noindent{\bf Proof.\enspace}}
\def\qed{\nobreak{\hfill\penalty0\hbox to1truecm{}\nobreak
    \hfill$\wbox634$\par\bigskip}}%

\ifx\url\undefined
  \typeout{No url, no sweat.}
\else
  \typeout{Hey, we've got url!}
\makeatletter
\def\url@rlwstyle{%
  \@ifundefined{selectfont}{\def\UrlFont{\sf}}
    {\def\UrlFont{\small\ttfamily}}}
\makeatother
\urlstyle{rlw}
\fi
\newif\ifdraft
%

\newif\ifsame
\newcommand{\strcfstr}[2]{%
  \samefalse
  \begingroup
    \def\1{#1}\def\2{#2}%
    \ifx\1\2\endgroup \sametrue
    \else \endgroup
    \fi}
\def\rev$Revi#1: #2 ${#2}
\def\dat$Dat#1: #2 #3 ${#2}
\def\need#1{\vskip0pt plus#1in\penalty-250\vskip0pt plus-#1in}%
\def\SVNtz$#1 -0#200#3${\global\def\tz{\ifcase#2 GMT\or-1\or-2\or ADT\or
    EDT\or EST \or CST\or MST\or PST\or AKST\or -10\or HST\else -#2\fi}}
{\ker}[1]{k(x,#1)}     

\graphicspath{{./}{Figures/New/}{Figures/}}
\begin{document}
\allowdisplaybreaks

\pagestyle{fancy}
\fancyhead[RO,LE]{\small\thepage}
\fancyhead[LO]{J. E. Johndrow and R. L. Wolpert}
\fancyhead[RE]{Tail waiting times}

\title{Model-free inference on extreme dependence via waiting times}
\author{James E. Johndrow \\ Duke University, Durham, NC USA \\
        \texttt{jj@stat.duke.edu} 
\and Robert L. Wolpert \\ Duke University, Durham, NC USA \\
         \texttt{wolpert@stat.duke.edu}
}

\maketitle
\begin{abstract}
  A variety of methods have been proposed for inference about extreme
  dependence for multivariate or spatially-indexed stochastic processes and
  time series.  Most of these proceed by first transforming data to some
  specific extreme value marginal distribution, often the unit Fr\'echet,
  then fitting a family of max-stable processes to the transformed data
  and exploring dependence within the framework of that model. The marginal
  transformation, model selection, and model fitting are all possible
  sources of misspecification in this approach.

  We propose an alternative model-free approach, based on the idea that
  substantial information on the strength of tail dependence and its
  temporal structure are encoded in the distribution of the waiting times
  between exceedances of high thresholds at different locations.  We
  propose quantifying the strength of extremal dependence and assessing
  uncertainty by using statistics based on these waiting times.  The method
  does not rely on any specific underlying model for the process, nor on
  asymptotic distribution theory. The method is illustrated
  by applications to climatological, financial, and electrophysiology data.

  To put the proposed approach within the context of the existing
  literature, we construct a class of spacetime-indexed stochastic
  processes whose waiting time distributions are available in closed form
  by endowing the support points in de Haan's spectral representation of
  max-stable processes with random birth times, velocities, and lifetimes,
  and applying Smith's model to these processes. We show that waiting times
  in this model are stochatically decreasing in mean speed, and the sample
  mean of the waiting times obeys a central limit theorem with a uniform
  convergence rate under mild conditions. This indicates that our procedure
  can be implemented in this setting using standard $t$ statistics and associated
  hypothesis tests.
\end{abstract}

\textbf{Keywords: } extreme value; max-stable process; peaks-over-thresholds;
tail dependence; time series; waiting time.

\section{Introduction}

In applications where multivariate or spatial extremes are of interest,
typically one has a collection of observations
$w(\bs x,t)=(w(x_1,t), \ldots,w(x_n,t))$ of a stochastic process
$\{W(x,t)\}$ at a collection of locations $x_1,\ldots,x_n$ and times
$t_1,\ldots,t_p$ in some study period $[T_0,T_0+T]$. These observations
could represent hourly precipitation, maximum daily wind speed, or, if we
treat the spatial index set $\mc X$ as a latent coordinate in an abstract
attribute space, essentially any multivariate time series, such as daily
stock prices. Inference often focuses on the strength of dependence at
extreme quantiles for pairs of points $x_1,x_2$.

Methods for estimation and inference often fit a particular parametric or
semi-parametric model to data, then explore dependence within the context
of this model. This is typically a model of a \emph{max-stable process}.
Prior to fitting, data have usually been transformed either by taking the
maximum over time windows \citep{tawn1988bivariate, tawn1990modelling}, or
keeping only data points where $w(\bs x,t)$ exceeds a threshold \citep
{davison1990models, smith1984threshold}, then transforming to a specific
extreme value marginal distribution such as the unit Fr\'echet.  The
implicit assumption is that these ``extreme'' data are approximately
realizations from the limiting max-stable process.  In most cases the full
likelihood under the model is intractable, so pseudo- or composite-
likelihood methods are used.

Here we propose an alternative approach to inference about dependence in
the extremes of a space-time indexed stochastic process, based on waiting
times between threshold exceedances at pairs of spatial locations.  Our
procedure has two steps.  We first compute an estimate of the distribution
of waiting times between exceedances at pairs of spatial locations under
the assumption that the process is independent at those points.  We then
estimate the distance between this ``null'' distribution and the empirical
distribution of waiting times between exceedances at these two points in a
suitable metric on probability measures.  This estimate is our basic
statistic quantifying dependence.  We also propose methods for interval
estimation and measures of significance.  An advantage of this method over
fitting a specified model directly to the data is that it does not rely on
any assumptions about the underlying process and, in contrast to
alternatives, is not based on any asymptotic approximation to the
distribution of observed data.  Unlike most alternatives, our approach does
not require estimation or transformation of the marginals.  Moreover, while
the term ``spatial locations'' is a useful shorthand, we emphasize that the
index set could be abstract and the locations unobserved, a point that we
illustrate with financial applications.  Our approach is also applicable to
settings where max-stable modeling is not, such as multi-hazard risk
management quantifying overall risk due to multiple sometimes-related
hazards like hurricanes, earthquakes, volcanic eruptions, and tsunamis.

A significant portion of this paper is devoted to connecting our approach
with existing methods for inference based on max-stable processes by
constructing a particular model of a max-stable process in which the
distribution of waiting times between exceedances is tractable.  The model
we construct is similar to that of \citep{davis2013max} in that both are
generalizations of \citep{smith1990max} to space-time, and may be viewed as
a special case of the models considered in \citep {embrechts2016space}.
Our aim is not to propose a new class of models for space-time indexed
max-stable processes, but to construct a particular space-time indexed
max-stable process in which we can easily study the distributions of
waiting times.  We show that waiting times in this model are stochatically 
decreasing in mean speed, and the sample mean of the waiting times obeys a 
central limit theorem. This indicates that in this setting, a form  
of our procedure can be implemented using standard $t$ tests for the
difference of means.

A \emph {max-stable process} is defined by \citet{de1984spectral} to be a
stochastic process $Y(x)$ on an index set $\mc{X}$ with the property that,
for all integers $n\in\bbN$,
\begin{align}
 Y(\cdot) \distd \frac1n \bigvee_{i =1}^n Y_i(\cdot), \label{eq:maxstable}
\end{align}
where $\{Y_i\}$ are iid copies of $Y$, where ``$\vee$'' denotes pointwise
maximum, and where for two stochastic processes $Y,Z$ 
the relation $Z(\cdot)\distd Y(\cdot)$ means that all their
finite-dimensional marginal distributions agree.  Slightly different
definitions appear elsewhere in the literature
(\citealp {smith1990max}; 
\citealp[ \S9.3] {coles2001introduction};
\citealp{schlather2002models};
\citealp[ \S8.2] {beirlant2006statistics};
\citealp[ \S9.2] {de2006extreme}).
Some authors use the term ``\emph{simple} max-stable process'' for those
which satisfy \eqref{eq:maxstable} (and hence have Fr\'echet univariate
marginal distributions with shape $\alpha=1$) and extend the class of
max-stable processes to those satisfying $Y(\cdot) \distd [\vee_{i =1}^n
Y_i(\cdot)-b_n(\cdot)]/a_n(\cdot)$
for suitable sequences of functions $a_n(\cdot)>0$, $b_n(\cdot)$.  In the
spatial or spatio-temporal setting, one usually takes $\mc{X} = \bb{R}^d$ for
some integer $d$.

There is a large literature on parametric models for max-stable
processes. Two approaches are common for model building. The first uses the
characterization of de Haan \citep{de1984spectral, resnick1991random}: a
process $Z(x) := \sup_j u_j \ker{\xi_j}$ is max-stable if
$k : \mc X \times \mc X \to \bb R_+$ is a nonnegative kernel satisfying
$\int_\cX\,\ker{\xi}\, m(d\xi)=1$ and $\{u_j,\xi_j\}$ are points of a
Poisson random Borel measure on $\bbR_+\times\mc X$ with intensity measure
proportional to $u^{-2}\,du\,m(d\xi)$ for some $\sigma$-finite Borel
reference measure $m(dx)$ on $\cX$. \citet{smith1990max} uses Gaussian
kernels to construct a model in which the joint distribution of the process
at two points is tractable (see also \citep{coles1996modelling} and
\citep{schlather2003dependence}).  Theoretical characterizations of this
model are found in \citet{husler1989maxima}.  Another example of this
approach is the circular model of \citet{coles1994directional}.

The other main approach is based on the \emph{spectral measure} 
\citep[ \S 8.2.3]{beirlant2006statistics}.  The most popular, and oldest,
parametric model is the logistic model \citep{gumbel1960bivariate,
  gumbel1960distributions}. Extensions of this model beyond the bivariate
case were described by \citet{tawn1990modelling,
  coles1991modelling}. Another example is the Dirichlet model of
\citet{coles1991modelling}.

In either approach to max-stable modeling, data must be transformed prior
to fitting, either by taking maxima over time windows or selecting only 
data that exceed a
threshold. There are several varieties of the latter, including: keeping
observations at times $t$ for which $\max_{i=1}^n w(x_i,t)$ exceeds a
pre-specified threshold \citep{rootzen2006multivariate,
  buishand2008spatial}; fixing a specific component (say, $x_1$) and
keeping observations at times $t$ where $w(x_1,t)$ exceeds a threshold
\citep{heffernan2004conditional, heffernan2007limit, das2011conditioning,
  balkema2007high}; and, keeping all observations at times $t$ where some
vector norm $\|w(\cdot,t)\|$ exceeds a threshold \citep
{coles1991modelling, ballani2011construction}. In addition, one usually
needs to estimate the margins; see \citet[ \S 9.3]
{beirlant2006statistics}.  Fitting is usually performed using approximate
likelihood methods.

A potential shortcoming of generic max-over-windows and peaks-over-thresholds 
approaches is that some temporal information is lost
by the transformation process. More recently, a number of models explicitly 
incorporating a time dimension have been proposed. \citet{huser2014space} extend
a model of \citet{schlather2002models} to the space-time setting. The model
is constructed using the de Haan characterization, and thus the method essentially
treats time as one component of the index set $\mc X$. \citet{davis2013max} 
similarly
extend the model of \citet{smith1990max} to the space-time setting, again 
treating
one component of the index set $\mc X$ as the time domain. One restrictive
feature of these models is that the nature of dependence across space and
time is the same. More recently,  
\citet{embrechts2016space} proposes models in which temporal dependence 
structure
can be different from spatial dependence structure when the process is Markovian in time.

\section{Inference based on waiting times} \label{sec:method} 

\subsection{Basic extremal dependence measure}
Begin with a real-valued space-time indexed stochastic process
\be
Y: \mc X \times \mc T \to \bb R,
\ee
typically with spatial coordinate $x \in \mc X = \bb R^d$ and time
coordinate $t \in \mc T$ with either continuous time $\mc T = \bb R_+$
or discrete time $\mc T = \bb Z_0 = \{0,1,2,\cdots\}$ with the property
that, for each fixed $x \in \mc X$, the process
\be
t \mapsto Y(x,t)
\ee
is stationary and strong Markov. Our procedure does not require Markovianity,
but it is more intuitive to motivate it in this context.

Fix a collection of (perhaps high) thresholds $y(x), x \in \mc X$, and 
consider the $\mc X$ and $\mc X \times \mc X$-indexed processes
\be
V(x) &:= \inf\{ t > 0 : Y(x,t) > y(x) \} \label{eq:V} \\
Z(x_1,x_2) &:= \inf\{ t > V(x_1) : Y(x_2,t) > y(x_2) \} - V(x_1) \label{eq:Z}, 
\ee
the waiting time until first exceedance of $y(x)$ at $x$, and the waiting time
until the first exeedance of $y(x_2)$ at $x_2$ subsequent to an exceedance of 
$y(x_1)$ at $x_1$. Let $\d$ be a semimetric on the space of probability 
measures and define
\be \label{eq:gammad}
\gamma_\d(x_1,x_2) := \d(\mc L\{V(x_2)\},\mc L\{Z(x_1,x_2)\}),
\ee
where $\mc L(Z)$ is the law of the random variable $Z$. If 
$Y(x_2,t) \ci Y(x_1,t)$, then $\gamma_\d(x_1,x_2) = 0$, whereas if there is 
strong dependence at high quantiles between $Y(x_1,\cdot)$ and $Y(x_2,\cdot)$, 
we would expect $\gamma_\d$ to be large. Thus, we propose to use 
estimates of $\gamma_\d(x_1,x_2)$ based on samples of $Y$ at pairs of locations 
$x_1,x_2$ to quantify the strength of dependence at high quantiles. 

In many applications, 
it is likely that dependence at high quantiles of $Y$ between locations $x_1$ 
and $x_2$ would result in the expectation of $Z(x_1,x_2)$ being smaller than 
the expectation of $V(x_2)$. In other words, extreme events at $x_1$ would tend 
to be followed soon thereafter by extreme events at $x_2$. In this case, we can 
just choose the semimetric
\be \label{eq:meandiff}
\d(\mu,\nu) = M(\mu,\nu) := \left| \int z (\mu-\nu)(dz) \right|, 
\ee
the absolute difference in the expectations. We consider other, more 
general, choices of $\d$ later. 

\subsection{Procedure: estimation of $\mc L(V(x_2))$ and $\mc L(Z(x_1,x_2))$}
Suppose initially that we observe $Y(x,t)$ at all times $t \ge 0$. Fix a site
$x \in \mc X$ and collections of thresholds $\underbar y(x) < \bar y(x)$ in the 
support of 
$Y(x,t)$, and set $\bar s_0 := 0$. Then for $j \in \bb N$, set 
\be \label{eq:v}
\underbar s_j(x) &:= \inf\{t > \bar s_{j-1}(x) : Y(x,t) \le \underbar y(x) \}; \\
\bar s_j(x) &:= \inf\{t > \underbar s_j(x) : Y(x,t) \ge \bar y(x) \}; \\
v_j(x) &:= (\bar s_j(x) - \underbar s_j(x)). 
\ee
The $(\underbar s_j(x), \bar s_j(x))$ are the times of the $j$th upcrossing of 
$(\underbar 
y(x), \bar y(x))$ by $Y(x,\cdot)$ and $v_j(x)$ is its duration. 

If 
instead of fixing $\underbar y(x)$, we drew $\underbar y(x)$ from the 
marginal distribution of $Y(x,t)$ -- which by assumption of stationarity does 
not depend on $t$ -- then by the strong Markov property, $\{v_j(x)\}$ would 
be an iid sequence from exactly the distribution of interest, that of $V(x)$ 
in \eqref{eq:V}. 
We expect that $\{v_j(x)\}$ will have \emph{approximately} the same 
distribution as $V(x)$
even with a fixed, appropriately chosen $\underbar y(x)$, such as the median. 
In
fact, if $Y(x,\cdot)$ were a finitely supported, discrete-time Markov process, it follows
from Theorem 1 of \cite{peres2015mixing} that the hitting time of the median is
exactly the mixing time.

If we only observe $Y(x,t)$ on some interval $t \in [0,T]$ then only a finite number
$J \ge 0$ of upcrossings will occur. In that case $\bar s_j(x)$ is infinite for 
$j 
> J$,
so $J = \max\{j \ge 0 : \bar s_j(x) < \infty\}$. It is possible for $J$ to be 
zero, 
i.e. to have no upcrossings before time $T$. In real applications the 
marginal distribution of $Y$ will be estimated from the sample, and its 
quantiles used to determine thresholds, so this will not be a concern. 
Accordingly, we use
\be \label{eq:FV}
\widehat F_{V(x)}(v) =  J^{-1} \sum_{j=1}^J \bone{v_j(x) \le v},
\ee
the empirical distribution of $\{v_j(x)\}$, as an estimator of $\mc L(V(x))$.

Estimation of $\mc L(Z(x_1,x_2))$ is similar. Fix two locations $\{x_1,x_2\} 
\subset \mc X$ and thresholds $\underbar 
y(x_1) < \bar y(x_1)$ and $\underbar y(x_2) < \bar y(x_2)$ in the 
support of $Y(x,t)$. Define $\bar{\mc S}(x) = \{\bar s_j(x)\}$ as the set of
all first exceedance times of $\bar y(x)$ at location $x$
obtained using \eqref{eq:v}. Define $s^*_0(x_2) = 0$ and for $j \in \bb N$,
set
\be
s^*_j(x_1) &:= \inf\{ t \in \bar{\mc S}(x_1) : t > s^*_{j-1}(x_2) \}; \\
s^*_j(x_2) &:= \inf\{ t \in \bar{\mc S}(x_2) : t > s^*_j(x_1) \}; \\
z_j(x_1,x_2) &:= (s^*_j(x_2) - s^*_j(x_1)) \label{eq:z}
\ee
This generates a sequence $\{z_j(x_1,x_2)\}$ of times to an exceedance at 
$x_2$ that follow one at $x_1$. A similar algorithm will generate a sequence 
$\{ z_j(x_2,x_1) \}$ of times to an exceedance at $x_1$ that follow one at 
$x_2$. We use
\be
\widehat F_{Z(x_1,x_2)}(z) = J^{-1} \sum_{j=1}^J \bone{z_j(x_1,x_2) \le z},
\ee
the analogue of \eqref{eq:FV}, as an estimator of $\mc L\{Z(x_1,x_2)\}$. We then 
use
\be
\widehat \gamma_\d(x_1,x_2) := \gamma_\d(\widehat F_{V(x_2)}, \widehat 
F_{Z(x_1,x_2)})
\ee
as our estimator of $\gamma_\d(x_1,x_2)$. In the case where $\d$ is the semimetric 
in \eqref{eq:meandiff}, one can perform approximate 
classical tests of the hypothesis 
\be
H_0: \d = 0
\ee
using the Welch $t$ statistic \cite{welch1947generalization}, and construct 
confidence intervals for the difference in means, with the one additional 
requirement that the central limit theorem holds the sample means $J^{-1} 
\sum_j v_j(x_2)$ and $J^{-1} \sum_j z_j(x_1,x_2)$.

In practice, it is often the case that $Y(x,t)$ is well-defined for all times $t \in \bb R_+$,
but is observed only at an increasing sequence of times $\{t_i\}$. In this case, 
if we begin for some $j$ with $Y(x,\underbar s_j(x))$ in the stationary 
marginal 
distribution, and set $\bar s_j(x) = \inf\{t_i > \underbar s_j(x) : Y(x,t_i) 
\ge 
\bar 
y\}$, then $v_j(x) := (\bar s_j(x) - \underbar s_j(x))$ will always 
over-estimate 
the 
actual time-to-exceedance. In fact it could over-estimate by an arbitrarily 
large amount, since it is possible for $Y(x,t^*) \ge \bar y(x)$ for an 
unobserved time $t^* \in (\underbar s_j(x), \bar s_j(x))$ that could be 
arbitrarily 
close to 
$\underbar s_j(x)$. This problem is not peculiar to our setting, and arises 
any time a continuous-time process is sampled discretely. 
If the discrete sampling frequency is high enough for the gaps $(t_j - t_{j-1})$ to be small compared to 
the typical fluctuations of $Y(x,t)$, then the discrete 
approximation will be reasonably accurate. We will assume this is the case -- were it not 
so, it would be a serious deficiency of the sampling design that would limit 
the usefulness of the data for most inference problems.

\subsection{Alternative metrics $\d$}
In some cases, the assumption that the expectation of $Z(x_1,x_2)$ decreases as 
$Y(x_1,\cdot)$ and $Y(x_2,\cdot)$ become more highly dependent at high 
thresholds is not realistic. For example, neurons firing 
in one brain region may suppress neuronal activity in another brain region. As 
such, we consider some alternatives to the semimetric in \eqref{eq:meandiff}.

A popular measure of discrepancy between empirical 
distributions is the  Anderson-Darling two-sample statistic
\be
\AD(\wh F_V, \wh F_Z) &= \frac{J_1 J_2}{J} \int_{-\infty}^{\infty} 
\frac{\{\wh F_V(z) - \wh
F_Z(z)\}^2}{\wh H_J(x)\{1-\wh H_J(x)\}} d \wh H_J(z),  
\ee
where $\wh H_J(z)$ is the empirical distribution function of the combined 
sample, 
which consists of $J = J_1+J_2$ observations. $\AD(\cdot,\cdot)$ is also a 
proper distance between finite atomic measures. 

Another distance we use is the Kolmogorov metric
\be \label{eq:Kolmogorov}
\KS(F_V,F_Z) = \sup_x |F(x) - G(x)|
\ee
for distribution functions $F_V,F_Z$. In finite samples this is estimated from 
the 
Kolmogorov-Smirnov statistic, which is just \eqref{eq:Kolmogorov} evaluated for 
empirical distribution functions $\wh F_V, \wh F_Z$. 

A third metric used here is a kernel
metric studied in the machine learning literature \cite{gretton2012kernel, gretton2006kernel, 
smola2007hilbert, song2009hilbert, sriperumbudur2010hilbert, gretton2012kernel, 
minsker2014robust} and defined 
as follows. Let $(\bb H,\langle
\cdot,\cdot \rangle_{\bb H})$ be a reproducing kernel Hilbert space on
$\THETA$ with reproducing kernel $\ck: \THETA \times \THETA \to \bb R$ and
unit ball $\bb H_1 := \{f\in\bb H: \langle f,f \rangle_{\bb H} \le 1\}$.  For
each $\theta\in\THETA$ denote by $\ck_\theta\in\bb H$ the function
$\ck(\theta,\cdot)$, that satisfies $\langle \ck_\theta, h\rangle_{\bb H} =
h(\theta)$ for all $h\in\bb H$. The set $\cP_\ck:=\{\text{ Borel probability measures 
$\mu$ on $\THETA$
  s.t. } \int_\THETA\sqrt{\ck(\theta,\theta)}\,\mu(d\theta)<\infty \}$ can be
embedded into $\cH$ by the mapping $\mu \mapsto
\mu^\bbH:=\int_\THETA\ck_\theta\,\mu(d\theta)$.  This induces a pseudo-metric
on $\cP_\ck$ by
\begin{align}
\d_\ck (\mu_1,\mu_2)&:= \|\mu_1^\bbH-\mu_2^\bbH\|_\bbH 
                  = \sup_{ h \in \bb H_1} \left|\int_{\Theta} h(\theta) 
                   (\mu_1 - \mu_2)(d\theta) \right|\label{eq:w1k}
\end{align}
\citep[ \S 2.3]{gretton2012kernel}. The kernel $\ck$ is called 
\emph{characteristic} if \eqref{eq:w1k} is in fact a metric, i.e.
$d_{\ck}(\mu_1,\mu_2) = 0$ if and only if $\mu_1 = \mu_2$. The 
Gaussian kernel $\ck_G(\theta,\theta') = \exp(-|\theta-\theta'|^2)$
is characteristic, by the definition given in \citep[\S 2.3]{minsker2014robust},
since for any non-zero signed measure $\mu$,
\be
\int \int \ck_G(\theta,\theta') \mu(d\theta) \mu(d\theta') &= \frac{1}{2 \sqrt{\pi}} 
\int \int \left\{ \int \exp(i(\theta-\theta')z)e^{-z^2/4} dz \right\} \mu(d\theta) 
\mu(d\theta') \\
&= \frac{1}{2 \sqrt{\pi}} \int \left| \int \exp(i \theta z) \mu(d\theta) \right|^2 
e^{-z^2/4} dz > 0.
\ee
We use the estimator described in \cite{gretton2012kernel} given by
\be
\hat \d_{\ck} := \frac{1}{J_1(J_1-1)} \sum_i \sum_j \ck_G(v_i,v_j) + 
\frac{1}{J_2(J_2-1)} \sum_i \sum_j \ck_G(z_i,z_j) 
-\frac{2}{J_1 J_2} \sum_i \sum_j \ck_G(v_i,z_j).
\ee

\subsection{A first application: stock price data} \label{sec:djia}

We illustrate the method by applying it to log daily returns of the 30
securities that made up the Dow Jones Industrial average as of January 1,
2015 for the period 2000--2014.  Transformation to log returns is 
common in finance, and negative values are associated with declines in
asset prices.  We put $\bar y(x) = \wh F^{-1}_{Y(x,\cdot)}(0.1)$, where 
$\wh F^{-1}_{Y(x,\cdot)}(\alpha)$ is the empirical $\alpha$ quantile for the log return 
series 
of asset $j$.  We compute waiting
times until the observed series goes \emph{below} these thresholds, so we
are interested in dependence in extreme price \emph{decreases}, or asset
price crashes. Point estimates $\wh \gamma_{\d}$ for every asset pair
using $\AD, \KS,\ck_G,$ and $\M$ for $\d$ are shown in Figure 
\ref{fig:djia_tiles}; in the case of $\d=\M$, we show the value of the $t$ 
statistic with unequal variances. These images are asymmetric, as expected, 
since $\gamma_\d(x_1,x_2) \ne \gamma_\d(x_2,x_1)$ in general, underscoring the 
sensitivity of the method to the order in which extreme events occur.
In addition to 
estimates of $\M$, we also show statistical significance for testing $H_0 : \M=0$ 
at level $0.05$. The testing indicators are not shown for $\AD$ because most of 
the pairwise tests are significant, and they are omitted for $\KS$ because the 
presence of ties renders the $p$-values inaccurate. Here and elsewhere, 
$p$-values are adjusted to obtain False Discovery Rate (FDR) control at level 
0.05 using the procedure of Benjamini and Hochberg.

Clearly, the strength of dependence in price crashes varies considerably across 
the Dow components.  Many of the pairs exhibiting the largest values 
of $\wh \gamma_\d$--- indicating strong dependence--- are easily 
anticipated. For example, among the strongest interactions is that between 
cvx (Chevron) and xom (Exxon-Mobil), in either order, two equities whose price is mainly driven 
by a single underlying factor: global oil prices. Other pairs exhibiting strong dependence are Verizon (vz) 
and AT\&T (t) and J.P. Morgan Chase (jpm) and Goldman Sachs (gs).

\begin{figure}[h]
\centering
\begin{tabular}{cc}
$\log_{10}(1+ \wh \gamma_{\AD}(x_1,x_2))$ & $\log_{10}(1+\wh \gamma_{\KS}(x_1,x_2))$ \\
\includegraphics[width=0.4\textwidth]{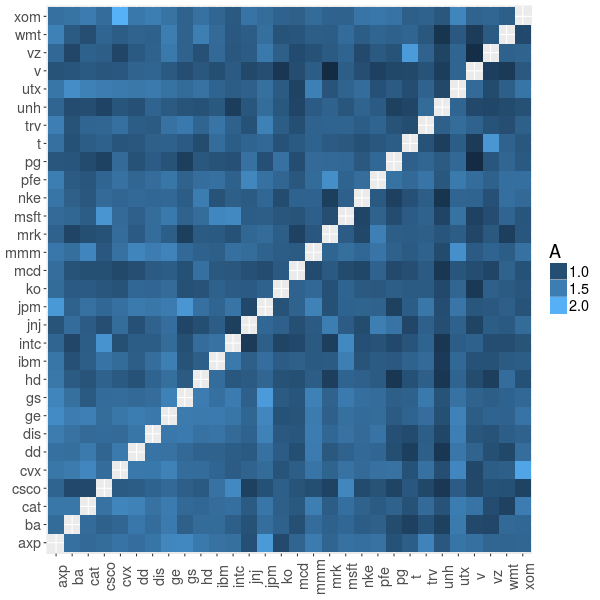} & 
\includegraphics[width=0.4\textwidth]{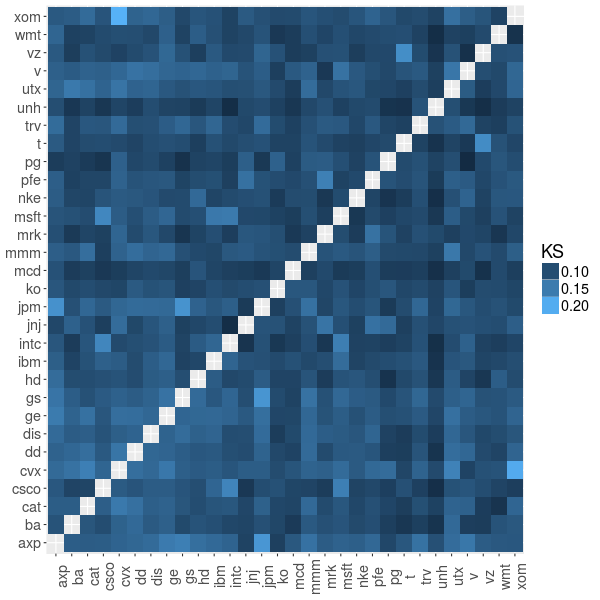} \\
$\wh \gamma_{\ck_G}(x_1,x_2)$ & $t$ statistic ($\d=\M$) \\
\includegraphics[width=0.4\textwidth]{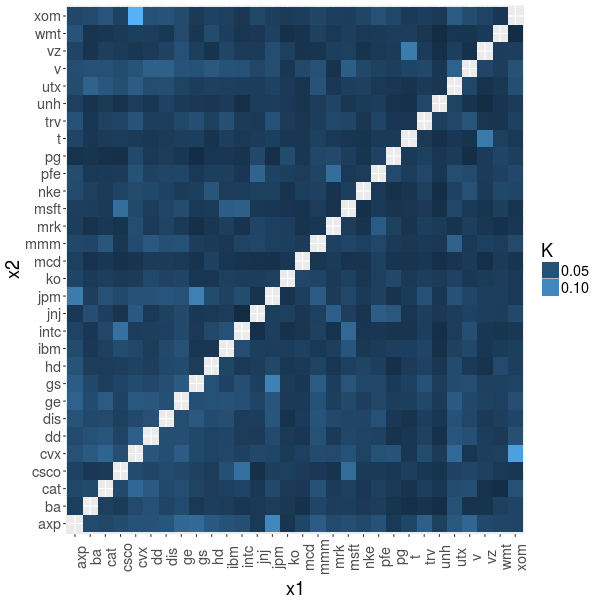} &
\includegraphics[width=0.4\textwidth]{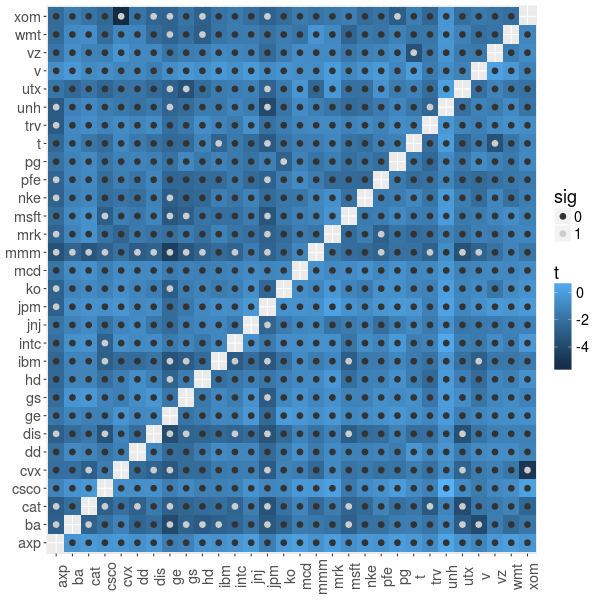} \\
\end{tabular}
\caption{Results for DJIA data. Point estimates for each of the 
different metrics are shown by shading in tile plots. $\wh \gamma_{\AD}$ and $\wh \gamma_{\KS}$ are
shown on the log scale for increased contrast. Statistical significance
is shown by overlaid points on the graphic showing $t$ statistics. }  
\label{fig:djia_tiles}
\end{figure}

Histograms of each of the four statistics across all asset pairs are shown in 
Figure \ref{fig:djia_stathists}. It is clear from these plots that while the 
$p$ values for testing $\AD=0$ and $\KS=0$ do not provide a useful way to 
identify interesting asset pairs, one can easily identify asset pairs with 
unusually strong dependence by selecting those with statistics in the right 
tail of the distribution. 

\begin{figure}
 \centering
 \includegraphics[height=0.25\textheight]{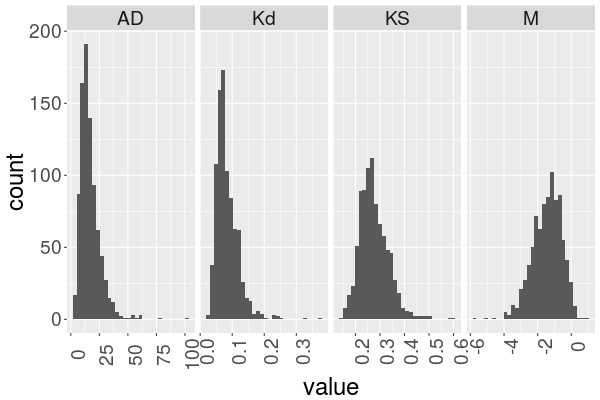} 
\includegraphics[height=0.25\textheight]{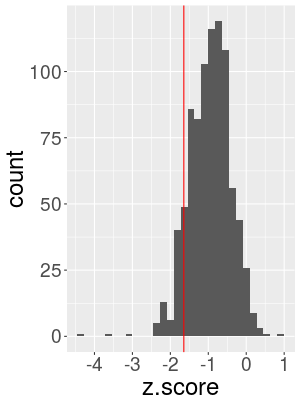}
 \caption{Left: histograms of raw statistics $\wh \gamma_{\d}$ for $\d=\AD$, 
 $\d=\ck_G$ (labeled \textnormal{Kd}), $\d=\KS$, and $\d=\M$ computed for 
every pair of assets; Right: histogram of z scores for testing $H_0: M=0$ 
after adjusting to control FDR at level 0.05.} \label{fig:djia_stathists}
\end{figure}

\subsection{Comparison to the Pickands dependence function}
A popular functional measure of extremal dependence is the bivariate
dependence function of \citet{pickands1981multivariate} (see also 
\citet[ \S 8.2.5]{beirlant2006statistics}).  For a random $n$-vector $Y$
following a max-stable distribution with distribution function $G$, define
\be \ell(v) := -\log G\{ G_1^{\leftarrow}(e^{-v_1}),\ldots,
                G_q^{\leftarrow}(e^{-v_n}) \} \ee
for $v \in \bb R_+^n$.  The function $\ell(v)$ is the \emph{stable tail
  dependence function} of $G$ (see \citep[p 257]{beirlant2006statistics}).  
  In the spatio-temporal setting, the random
variables $Y_1,\ldots,Y_n$ are associated with the process at a collection
of points, so $Y_i = Y(x_i,t)$, and repeated observations of the random
vector $Y$ correspond to sampling of the process at these locations 
at times $t_1,\ldots,t_J$. 
The \emph{Pickands
dependence function} $A(r)$ is the restriction of the bivariate tail
dependence function to the simplex
\be \label{eq:Pickands} A(r) = \ell(1-r,r), \quad r \in [0,1]. \ee
A bivariate max-stable distribution $G$ is determined by its margins
$G_1,G_2$ and $A$ by
\be G(y_1,y_2) = \exp \left[ \log\{ G_1(y_1) G_2(y_2) \} A
  \left( \frac{\log\{ G_2(y_2) \}}{\log\{ G_1(y_1) G_2(y_2) \}} \right)
  \right].  \ee
Clearly, if $A=1$, we obtain independence, while if $A$ achieves its lower
bound $A \le (1-r) \vee r$, we obtain
$G(y_1,y_2) = G_1(y_1) \wedge G_2(y_2)$, which corresponds to complete
dependence.

There is no direct representation of time in \eqref{eq:Pickands}, but if
$Y_1 = Y(x_1,t_1)$ and $Y_2 = Y(x_1,t_2)$, then $A$ is a measure of
dependence for a max-stable process at location $x_1$ at time $t_1$ and
location $x_2$ at time $t_2$.  Clearly, the existence of a pair
$\{(x_1,t_1), (x_2,t_2)\}$ for which $Y(x_1,t_1)$ is not independent of
$Y(x_2,t_2)$ is sufficient for the distribution of waiting times between
exceedances of $y_1$ at $x_1$ and $y_2$ at $x_2$ to differ from its
distribution under complete independence.  Conversely, if the distribution
of waiting times between exceedances of $y_1$ at $x_1$ and $y_2$ at $x_2$
differs from its distribution under independence, then there must exist at
least one pair $\{(x_1,t_1),(x_2,t_2)\}$ for which $A(r) \ne 1$.  While a
complete understanding of the temporal nature of dependence between
locations $x_1$ and $x_2$ depends on $A$ corresponding to every pair
$\{(x_1,t_1),(x_2,t_2)\}$, it can be fully captured by $\gamma_{\d}$ in
\eqref{eq:gammad}, at least for processes that are stationary in time.
Moreover, while $A$ only makes sense for max-stable processes, $\gamma_\d$
is meaningful for any space-time indexed stochastic process.

\section{Waiting time distributions in a model of a max-stable
  process} \label{sec:model}
In this section we construct a model of a space-time indexed max-stable
process and derive some of its properties.  Our goal is to connect the
existing literature on the statistics of multivariate extremes with the
proposed method by deriving distributions of waiting times in a model of a
max-stable process. We also show that under this model, the sample means
of waiting times satisfy a central limit theorem, and give a stochastic
dominance result for $Z$ that implies the mean waiting times are systematically
shorter under dependence.
We refer to the model as a \emph{max-stable velocity
  process}.  It is related to the space-time versions of the Gaussian
max-stable model proposed by \citet{davis2013max} and the general Markovian
max-stable models of \cite{embrechts2016space}.

\subsection{Max-stable velocity processes}
The max-stable velocity process is constructed by extending the spectral
characterization of \citet {de1984spectral} and its continuous-path
extension by \citet{resnick1991random}:
\begin{theorem}[after \citeauthor{de1984spectral} and \citeauthor
  {resnick1991random}] \label{thm:dehaan}
 Let $\{(u_j,\xi_j)\}_{j \ge 1}$ be the points of a Poisson process on
 $\bbR_+ \times \bb{R}^d$ with intensity measure proportional to $u^{-2}~du
 ~d\xi$.  Let $\{Y(x)\}_{x \in \bb{R}^d}$ be a path-continuous max-stable
 process with unit Fr\'{e}chet margins.  Then there exist 
 nonnegative continuous functions $\{\ker{\xi} : x, \xi \in \bb{R}^d\}$ such
 that
\[ \int_{\bb{R}^d} \ker{\xi} d\xi = 1 \hspace{10mm} \forall\, x \in \bb{R}^d \]
 for which
 \begin{align}
  \{Y(x)\}_{x \in \bb{R}^d} \distd
     \left\{ \sup_{j \ge 1} u_j \ker{\xi_j} \right\}_{x \in \bb{R}^d},
     \label{eq:dehaan}
 \end{align}
 where $\distd$ denotes equality in distribution.  Moreover,
 any process defined by the right side of \eqref{eq:dehaan} is max-stable.
\end{theorem}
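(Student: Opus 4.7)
The proof splits naturally into two directions: the representation and the converse. I would handle the converse (``moreover'') direction first since it is constructive and provides intuition for the harder direction.

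For the converse, suppose $Z(x) := \sup_{j\ge 1} u_j k(x,\xi_j)$ with $(u_j,\xi_j)$ the points of a Poisson process $N$ on $\bbR_+\times\bbR^d$ with intensity $u^{-2}\,du\,d\xi$. First I would verify the unit Fréchet marginal: the set $A_{x,y} := \{(u,\xi): u\,k(x,\xi) > y\}$ has $N$-mean $\int_{\bbR^d}\int_{y/k(x,\xi)}^\infty u^{-2}\,du\,d\xi = y^{-1}\int k(x,\xi)\,d\xi = 1/y$ by the normalization hypothesis, so $\P(Z(x)\le y) = \P(N(A_{x,y})=0) = e^{-1/y}$. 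For max-stability, I would take $n$ iid copies of $N$, write their superposition $\widetilde N := \sum_{i=1}^n N_i$ as a Poisson process with intensity $n u^{-2}\,du\,d\xi$, and push forward via the map $(u,\xi)\mapsto(u/n,\xi)$; by the homogeneity of $u^{-2}$ under scaling, the pushforward is again Poisson with intensity $u^{-2}\,du\,d\xi$. Since $n^{-1}\bigvee_{i=1}^n Z_i(x)$ is a supremum over the pushforward points of $u\,k(x,\xi)$, it has the same law as $Z(x)$. This verifies \eqref{eq:maxstable} at the level of finite-dimensional distributions.

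For the representation direction, I would build on \citet{de1984spectral}, which shows that any simple max-stable process $Y$ admits a representation $Y(x)\distd \sup_j u_j V_j(x)$ where $(u_j,V_j)$ are points of a Poisson process on $\bbR_+\times E$ with intensity $u^{-2}\,du\,\nu(dV)$ for some probability measure $\nu$ on a Polish space $E$ of nonnegative functions on $\bbR^d$ with $\int_E V(x)\,\nu(dV)=1$ for all $x$. Under Resnick's \citeyearpar{resnick1991random} path-continuity assumption, $E$ can be taken as a Borel subset of $C(\bbR^d;\bbR_+)$. The goal is then to re-coordinatize $E$ by $\bbR^d$ with Lebesgue reference measure. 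The plan is: (i) realize $(E,\nu)$ as a standard Borel probability space and, via a measurable isomorphism with an absolutely continuous distribution on $\bbR^d$, parametrize $V_j = g(\cdot,\xi_j)$ for $\xi_j$ drawn from a density $p$ on $\bbR^d$, with $\int g(x,\xi)p(\xi)\,d\xi=1$; (ii) absorb the density by setting $k(x,\xi):=g(x,\xi)p(\xi)$ and $v_j := u_j/p(\xi_j)$. The mapping theorem for Poisson processes then shows that $(v_j,\xi_j)$ has intensity $v^{-2}\,dv\,d\xi$, since $u^{-2}\,du\,p(\xi)\,d\xi = (vp(\xi))^{-2}p(\xi)\cdot p(\xi)\,dv\,d\xi = v^{-2}\,dv\,d\xi$, and $u_j V_j(x) = v_j k(x,\xi_j)$, yielding exactly the form in \eqref{eq:dehaan} with the required normalization.

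The main obstacle is step (i) of the re-coordinatization: producing a measurable identification of $(E,\nu)$ with $\bbR^d$ equipped with an absolutely continuous probability measure such that the resulting family $\{g(\cdot,\xi): \xi\in\bbR^d\}$ inherits the joint measurability needed to apply the Poisson mapping theorem. This relies on the standard fact that any uncountable standard Borel space with an atomless probability measure is isomorphic to $(\bbR^d,\text{Lebesgue after reweighting})$; degeneracies (e.g.\ $\nu$ with atoms, or support of lower Hausdorff dimension than $d$) can be handled by enlarging the representation trivially in auxiliary coordinates, or by observing that the original de Haan representation is not unique and one may always choose a non-degenerate version. Once this identification is in place, the remaining calculations are routine applications of the Poisson mapping and restriction theorems.
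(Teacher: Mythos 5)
The paper does not actually prove this statement: Theorem~\ref{thm:dehaan} is quoted as background (``after'' \citet{de1984spectral} and \citet{resnick1991random}), and the appendix/supplement only prove the paper's own results (Theorems~\ref{thm:msvp} onward), so there is no in-paper argument to compare yours against. Judged on its own, your outline is essentially correct and follows the standard route in the cited literature. The converse half is right: the $1/y$ computation for the Fr\'echet margin and the superposition-plus-scaling argument (pushing $nu^{-2}\,du\,d\xi$ forward under $(u,\xi)\mapsto(u/n,\xi)$) is exactly the homogeneity property the paper itself exploits later when proving Theorem~\ref{thm:msvp}, and it does give equality of all finite-dimensional laws. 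For the direct half, your re-coordinatization of the spectral measure $\nu$ on $E\subset C(\bbR^d;\bbR_+)$ to $(\bbR^d,p(\xi)\,d\xi)$, followed by absorbing the density via $k(x,\xi):=g(x,\xi)p(\xi)$, $v:=u/p(\xi)$, is the usual change-of-reference-measure trick and the intensity calculation $u^{-2}\,du\,p(\xi)\,d\xi\mapsto v^{-2}\,dv\,d\xi$ is correct. Two small refinements: (i) you do not need a measurable \emph{isomorphism} of $(E,\nu)$ with $\bbR^d$, only a measurable map $T:\bbR^d\to E$ pushing $p(\xi)\,d\xi$ forward to $\nu$, which exists for any Borel probability measure on a Polish space (atoms are then harmless, since $T$ need not be injective, so your worry about atomlessness and ``trivial enlargement'' can be dispensed with); joint measurability of $(x,\xi)\mapsto T(\xi)(x)$ comes from joint measurability of the evaluation map on $\bbR^d\times C(\bbR^d)$. (ii) The resulting $k(\cdot,\xi)$ is continuous in $x$ for ($p$-a.e.) $\xi$ but only measurable in $\xi$; that is all the de Haan--Resnick--Roy representation delivers, and it is how the continuity claim in the statement should be read.
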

A useful heuristic for de Haan's spectral characterization is that of weather
extremes: the locations $\{\xi_j\}$ of the support points are taken to be
storm centers, the kernel functions $\ker{\xi_j}$ describe the shape of
the storm, and the marks $\{u_j\}$ quantify storm severity.  In this context,
the process realization is the maximum over some period of time of a
climatological quantity, such as precipitation or temperature.  To create a
time-indexed process, we endow the points $\xi_j$ with birth times,
lifetimes, velocities, and shapes.  This approach has the advantage of easily
extending the physical interpretation of the points $\xi_j$ as storms or,
more generally ``events.''  Now, the storms will move and have finite
lifespans.

Specifically: fix positive numbers $\beta>0$ and $\delta>0$ and a Borel
probability measure $\PI(da)$ on a Polish ``attribute space'' $\mc A$.
Define a $\sigma$-finite Borel measure
\begin{align}
\nu(d\omega)&:=\beta u^{-2} du ~d\xi ~d\sigma 
   ~\delta e^{-\delta\tau}d\tau~\PI(da) \label{e:nu}
\end{align}
on the space $\Omega:= \bbR_+\times \bb{R}^d\times \bb{R}\times \bbR_+\times
\mc{A}$, and let $\mc{N}(d\omega) \sim \text{Po} \big(\nu(d\omega)\big)$ be a
Poisson random measure on $\Omega$ with intensity $\nu(d\omega)$--- i.e., a
measure that assigns independent random variables
$\mc{N}(B_j)\sim\text{Po}(\nu(B_j))$ to disjoint Borel sets
$B_j\subset\Omega$.  Fix a nonnegative function $ k:~ \bb{R}^d\times \bbR
\times \bb{R}^d\times\bbR \times \bbR_+ \times \mc{A}\to \bb{R}_+$ that
satisfies $\int_{\bbR^d} k(x,t; \xi,\sigma,\tau,a)\,d\xi=1$ for each
$x,t,\sigma,\tau,a$.  Using the climatological heuristic, the support points
$\{\omega_j\} = \{(u_j,\xi_j,\sigma_j,\tau_j,a_j)\}$ of $\mc{N}(d\omega)$ can
be thought of as representing storms of magnitudes $u_j>0$, initiating at
locations $\xi_j \in \bb{R}^d$ at times $\sigma_j \in\bb{R}$, with lifetimes
$\tau_j\in\bbR_+$ and attribute vectors $a_j$ that may include velocities
$v_j$, shapes $\Lambda_j$, or other features.  For any location
$x\in\bb{R}^d$ and time $t\in\bbR$, define
\begin{subequations}\label{eq:msv-both}
\begin{alignat}{2}
  Y  (x,t) &:= \sup_{j}\{ u_j\, k(x, t; \xi_j,\sigma_j,\tau_j, a_j) \} 
     \qquad &t& \in\bb{R} \label{eq:msv} \\
  Y^*(x,t) &:= \sup_{0\le s\le t} Y(x,s),&t&\ge0.\label{eq:msv-max}
\end{alignat}
\end{subequations}
We refer to \eqref{eq:msv} as a \emph{max-stable velocity (MSV) process} and 
\eqref{eq:msv-max} as the corresponding \emph{maximal MSV process}.

In the sequel we will take $\mc{A}=(\bb R^d \times \mc P^d)$ with elements
$a_j = (v_j,\Lambda_j) \in \mc{A}$ that consist of a velocity vector
$v_j\in\bbR^d$ and a shape matrix $\Lambda_j\in \cP^d$, an element of the
cone $\cP^d$ of positive-definite $d{\times}d$ matrices.  Let $\varphi: \bb
R^d \to \bb R_+$ be a continuous pdf satisfying $\int_{\bb R^d} \varphi(z) dz
= 1$ that is non-increasing in $z'z$ and, for $\Lambda \in \mc P^d$, set
$\vpL(z) := |\Lambda|^{1/2} \varphi(\Lambda^{1/2} z)$ (here $|\Lambda|$
denotes the determinant of $\Lambda\in\mc P^d$).  We take $k$ to have the
specific form
\be k(x,t;\xi,\sigma,\tau,a) 
  &= \varphi_{\Lambda}\big(x-\xi-v(t-\sigma)\big)
     \bone{\sigma\le t<\sigma+\tau}, \label{eq:phidef} \ee
the magnitude at time $t$ and location $x\in\bbR^d$ of a storm of unit
severity that originated at location $\xi\in\bbR^d$ at a time $\sigma<t$
and moved at velocity $v\in\bbR^d$ for time $(t-\sigma)$.  We write
$Y \sim \msv(\beta,\delta,\PI,\varphi)$ to denote a process of the form
\eqref{eq:msv} with $k$ defined by \eqref{eq:phidef} and $\nu(d\omega)$ as
in \eqref{e:nu}. This includes and generalizes the Gaussian and Student $t$
kernels proposed as models for the generic max-stable process by \citet
{smith1990max}.

\subsection{Main results}
Max-stable velocity processes are in fact space-time indexed forms of the max-stable process, 
as shown in Theorem \ref{thm:msvp}.  All proofs are deferred to the Appendix.
\begin{theorem}[Max-stable velocity processes are max-stable] \label{thm:msvp}
 If $Y\sim \msv(\beta,\delta,\PI,\varphi)$, then $Y$ is max-stable jointly in
 $(x,t)$ on the index set $\bb R^d \times \bb R_+$.
\end{theorem}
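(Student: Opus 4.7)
The plan is to derive max-stability directly from the Poisson construction by combining the superposition and scaling properties of the driving random measure $\mc N$, in the same spirit as de Haan's spectral characterization (\Thm{thm:dehaan}). The first step is to factor the intensity as $\nu(d\omega) = u^{-2}du \otimes m(d\omega')$, where $\omega' := (\xi,\sigma,\tau,a)$ and
\[
m(d\omega') := \beta\, d\xi\, d\sigma\, \delta e^{-\delta\tau} d\tau\, \PI(da),
\]
and then verify that $\int k(x,t;\xi,\sigma,\tau,a)\, m(d\omega') = \beta/\delta$ for every $(x,t) \in \bbR^d \times \bbR_+$. This follows from a short computation: the $\xi$-integral of $\varphi_\Lambda(x - \xi - v(t-\sigma))$ equals $1$ regardless of $\sigma,v,\Lambda$, and $\int \bone{\sigma \le t < \sigma + \tau}\, \delta e^{-\delta\tau} d\tau\, d\sigma = 1/\delta$ uniformly in $t$. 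This $(x,t)$-uniform normalization, via the standard Fr\'echet calculation, also gives $\P(Y(x,t) \le y) = \exp(-\beta/(\delta y))$ and in particular that the sup in \eqref{eq:msv} is a.s.\ finite.

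Next, for $n \in \bbN$ I would build iid copies $Y_1,\dots,Y_n$ of $Y$ from independent Poisson random measures $\mc{N}^{(1)},\dots,\mc{N}^{(n)}$, each with intensity $\nu$. By superposition, $\widetilde{\mc N} := \sum_{i=1}^n \mc{N}^{(i)}$ has intensity $n\nu = (n u^{-2}du)\otimes m(d\omega')$, so for every $(x,t)$,
\[
\tfrac{1}{n}\bigvee_{i=1}^n Y_i(x,t) \;=\; \sup_{(u,\omega') \in \widetilde{\mc N}} (u/n)\, k(x,t;\xi,\sigma,\tau,a).
\]
The decisive step is scale invariance in the $u$-coordinate: the map $\Phi:(u,\omega') \mapsto (u/n,\omega')$ pushes $\widetilde{\mc N}$ forward to a Poisson random measure whose intensity, computed by change of variables $u' = u/n$, is $n(nu')^{-2} \cdot n\, du' \otimes m(d\omega') = (u')^{-2}du' \otimes m(d\omega') = \nu$. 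Hence $\Phi_* \widetilde{\mc N} \distd \mc N$.

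Because the expression $u\, k(x,t;\xi,\sigma,\tau,a)$ couples $u$ and $\omega'$ only through multiplication, this equality of Poisson random measures transfers to joint equality of finite-dimensional distributions: for every finite collection $\{(x_k,t_k)\}_{k=1}^p \subset \bbR^d \times \bbR_+$,
\[
\Bigl(\tfrac{1}{n}\bigvee_{i=1}^n Y_i(x_k,t_k)\Bigr)_{k=1}^p \;\distd\; \bigl(Y(x_k,t_k)\bigr)_{k=1}^p,
\]
which is exactly \eqref{eq:maxstable} applied to the index set $\bbR^d \times \bbR_+$. I do not anticipate a serious obstacle here: the argument shows that de Haan's spectral construction is preserved whenever one enlarges the spectral index space by additional Poisson marks (here $\sigma,\tau,a$), provided the enlarged kernel has an $(x,t)$-uniform integral against the reference measure. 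The only points requiring care are tracking the Jacobian in the $u$-rescaling and confirming that $m(d\omega')$ is the correct $\sigma$-finite reference measure on the augmented spectral space $\bbR^d \times \bbR \times \bbR_+ \times \mc A$.
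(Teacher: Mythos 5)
Your proposal is correct, and its first half coincides with the paper's: the paper also builds the $n$ iid copies from independent Poisson random measures and uses superposition to identify $\bigvee_{i\le n} Y_i$ as an $\msv(n\beta,\delta,\PI,\varphi)$ process (its Theorem on maxima of MSV processes). Where you diverge is the scaling step. The paper finishes by invoking the explicit joint CDF of Theorem \ref{thm:joint-cdf}: in the inclusion--exclusion formula each term $\nu(\cap_{j\in J}B_j)$ is invariant under $\beta\mapsto n\beta$, $y_j\mapsto n y_j$, so the finite-dimensional laws of $\bigvee_i Y_i$ agree with those of $nY$. You instead never leave the point-process level: you push the superposed measure forward under $(u,\omega')\mapsto(u/n,\omega')$ and use the $-2$ homogeneity of $u^{-2}du$ (the Jacobian computation $n(nu')^{-2}\cdot n\,du'=(u')^{-2}du'$) together with the Poisson mapping theorem to conclude $\Phi_*\widetilde{\mc N}\distd\mc N$, after which the fdd identity follows because $\tfrac1n\bigvee_i Y_i(x_k,t_k)$ is exactly the MSV functional of $\Phi_*\widetilde{\mc N}$. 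Both routes exploit the same homogeneity, but yours is self-contained given the Poisson construction and makes transparent that max-stability is a general feature of de Haan-type spectral constructions with extra marks, while the paper's is shorter once Theorem \ref{thm:joint-cdf} is in hand and reuses machinery needed elsewhere. Your preliminary verification that $\int k\,m(d\omega')=\beta/\delta$ (hence Fr\'echet margins and a.s.\ finiteness of the supremum) matches the paper's marginal computation and is a sensible well-definedness check, though it is not strictly required for the distributional identity itself.
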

Theorem \ref{thm:msvp} is shown by verifying the definition in
\eqref{eq:maxstable} directly--- that is, by verifying that the process is
``stable'' under finite maxima.  It turns out that the maximal MSV process
$Y^*(x,t)$ of \eqref{eq:msv-max} is also max-stable.
\begin{theorem}[Maximal MSV process is max-stable] \label{thm:msv-max}
 If $Y\sim \msv(\beta,\delta,\PI,\varphi)$, then $Y^*(x,t)$ is max-stable in
 $(x,t)$ on the index set $\bb R^d \times \bb R_+$.
\end{theorem}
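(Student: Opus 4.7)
The plan is to reduce Theorem \ref{thm:msv-max} to the same Poisson-superposition argument that underlies Theorem \ref{thm:msvp}, by recasting $Y^*$ in the de Haan form. Because the supremum over $s\in[0,t]$ commutes with the supremum over Poisson atoms $j$, one has
\[
Y^*(x,t) \;=\; \sup_{0\le s\le t}\sup_j u_j\, k(x,s;\xi_j,\sigma_j,\tau_j,a_j)
            \;=\; \sup_j u_j\, \widetilde k(x,t;\xi_j,\sigma_j,\tau_j,a_j),
\]
where $\widetilde k(x,t;\xi,\sigma,\tau,a) := \sup_{0\le s\le t} k(x,s;\xi,\sigma,\tau,a)$. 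This exhibits $Y^*$ as a pointwise supremum over the same Poisson random measure $\cN$ that drives $Y$, with only the kernel changed to the measurable $\widetilde k$. The verification of \eqref{eq:maxstable} will then rely only on the $u^{-2}\,du$ structure of the mark intensity and not on the particular form of $\widetilde k$.

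Next, I would take independent copies $\cN^{(1)},\ldots,\cN^{(n)}$ of $\cN$ with corresponding maximal processes $Y^{*(i)}$. By superposition, $\bigcup_{i=1}^n \cN^{(i)}$ is a Poisson random measure with intensity $n\nu$. Under the scaling $(u,\xi,\sigma,\tau,a)\mapsto (u/n,\xi,\sigma,\tau,a)$, the factor $n\beta u^{-2}\,du$ in $n\nu$ pushes forward to $\beta v^{-2}\,dv$ (since $u^{-2}\,du$ is $(-1)$-homogeneous in $u$), so the image PRM is distributed as $\cN$. Consequently, jointly over any finite collection of space-time points,
\[
\bigvee_{i=1}^n Y^{*(i)}(x,t) \;=\; \sup_{j\in\bigcup_i \cN^{(i)}} u_j\,\widetilde k(x,t;\xi_j,\sigma_j,\tau_j,a_j) \;\distd\; n\,Y^*(x,t),
\]
which is exactly \eqref{eq:maxstable} for $Y^*$.

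The main technical obstacle is the joint measurability of $\widetilde k$ in $(x,t,\xi,\sigma,\tau,a)$; this is immediate from the form \eqref{eq:phidef}, because between birth $\sigma$ and death $\sigma+\tau$ the map $s\mapsto k(x,s;\xi,\sigma,\tau,a)$ is continuous, and therefore the supremum over $s\in[0,t]$ agrees with the supremum over $s\in\bb Q\cap [0,t]$, which is a countable supremum of measurable functions. Once this is in place, the $u^{-2}\,du$ scaling is identical to the one used in Theorem \ref{thm:msvp} and no further input is needed; in particular, one does not require $\int \widetilde k\, d\xi = 1$, which would be false in general.
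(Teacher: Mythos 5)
Your proposal is correct, and at heart it is the same superposition-plus-homogeneity argument as the paper's, but you execute the scaling step by a different device. The paper factors the result into Lemma \ref{lem:Yc} (the maximum of $n$ iid maximal processes is the maximal process of an $\msv(n\beta,\delta,\PI,\varphi)$ process, obtained by superposing the underlying Poisson random measures --- exactly your ``union of supports'' step) and Lemma \ref{lem:cYcbeta} (for $c>0$, $c\,Y_1^*\distd Y_c^*$, proved by writing the joint CDF of $Y^*$ at finitely many space-time points via inclusion--exclusion, as in \eqref{e:capBstar}, and observing that each term is invariant when $\beta$ and all thresholds $y_j$ are multiplied by $c$). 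You replace that second, computational lemma by rescaling the Poisson marks: pushing the superposed measure forward under $(u,\xi,\sigma,\tau,a)\mapsto(u/n,\xi,\sigma,\tau,a)$ carries the intensity $n\nu$ back to $\nu$ because $u^{-2}\,du$ scales by $c^{-1}$ under $u\mapsto cu$, so $\bigvee_i Y^{*(i)}\distd n\,Y^*$ with no CDF calculation; recasting $Y^*$ as $\sup_j u_j\,\widetilde k$ with $\widetilde k(x,t;\omega)=\sup_{0\le s\le t}k(x,s;\omega)$ makes this immediate and, as you correctly note, requires no normalization of $\widetilde k$ (only the $u^{-2}\,du$ structure matters). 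What your route buys is brevity and the general observation that any spectral construction over a $u^{-2}\,du$ Poisson measure is automatically max-stable; what the paper's route buys is the explicit joint CDF of $Y^*$, which is of independent use. One cosmetic caveat: the supremum over $s\in\bbQ\cap[0,t]$ can miss the true supremum when the interval $[0\vee\sigma,\,t\wedge(\sigma+\tau))$ degenerates to a single, possibly irrational, point (e.g.\ $\sigma=t$); adjoin the points $0\vee\sigma$ and $t$ to your countable set, or observe that the supremum is attained at one of finitely many explicit measurable candidate times, as in the paper's case analysis for Theorem \ref{thm:marginals}. This affects only measurability bookkeeping, not any distributional identity, so your argument stands.
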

Therefore, if data are generated from a MSV process and transformed by taking
blockwise maxima, the transformed data are realizations of a max-stable
process. 

The advantage of this particular max-stable process for our purposes is 
that waiting time distributions are fairly tractable. Theorem 
\ref{thm:marginals}
gives the marginal distribution of the max-stable velocity process $Y(x,t)$
and that of the waiting times $V(x)$ until first exceedance. 
\begin{theorem}[Marginals and waiting times]\label{thm:marginals}
 The max-stable velocity process has unit Fr\'{e}chet marginals, with
 distribution function $\P [Y(x,t) \le y] = e^{-\beta/\delta y}$ for $y>0$.
 The  marginal waiting time distribution is given for $t\ge0$ by
\be \P[V(x) >t] &=
     \exp\left(- \frac{\beta}{\delta y} - t\frac{\beta}{\delta y}
     \set{\delta + \int_{\bbR^d \times \cP^d\times v^\perp} \vpL(\zeta)\,
     d\zeta \,|v|\,\PI(dv\,d\Lambda)} \right), \label{e:kap-surv}
\ee 
where $v^\perp:=\{\zeta\in\bbR^d:~v'\Lambda \zeta=0\}$ denotes the orthogonal
complement of the span of $v$ in the $\Lambda$ metric.  This is a mixture of
a point-mass at zero with weight $1-\exp(\beta/\delta y)$ and an
exponential distribution with a shape-dependent rate constant that increases
monotonically in the mean particle speed.
\end{theorem}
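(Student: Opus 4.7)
The plan is to apply the Poisson void probability formula at the level of $\cN$ twice: first to obtain the marginal, and then to obtain $\P[\kappa_i > t]$ via the equivalence $\{\kappa_i > t\} = \{Y(x_i, s) \le y_i \text{ for all } s \in [0,t]\}$. The bulk of the work concentrates in a single ``sausage integral'' that measures the expected contribution of one generic storm to an exceedance of level $y_i$ at location $x_i$ somewhere in a time window of length $t$.

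The marginal part is short: $\P[Y(x,t) \le y] = \exp\!\bigl({-}\!\int \bone{u k > y}\,\nu(d\omega)\bigr)$. Integrating $u \in (y/k, \infty)$ against $u^{-2}\,du$ yields a factor $k/y$; the $\xi$-integral collapses to $\bone{\sigma \le t < \sigma+\tau}$ by $\int \vpL(\cdot)\,d\xi = 1$; the $\sigma$-integral contributes $\tau$; and $\int \tau\,\delta e^{-\delta\tau}\,d\tau = 1/\delta$. The total exposure is therefore $\beta/(\delta y)$, giving the claimed Fr\'echet marginal.

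For the waiting time, the analogous $u$-integration reduces $S := -\log \P[\kappa_i > t]$ to $(\beta/y_i)\int I(\sigma,\tau,v,\Lambda)\,d\sigma\,\delta e^{-\delta\tau}\,d\tau\,\PI(dv\,d\Lambda)$, where $I(\sigma,\tau,v,\Lambda) := \int \sup_{r \in R} \vpL(\eta - vr)\,d\eta$, with $R := [\max(-\sigma,0), \min(t-\sigma,\tau)]$ and $\eta := x_i - \xi$. To evaluate $I$, substitute $\tilde\eta := \Lambda^{1/2}\eta$ and $\tilde v := \Lambda^{1/2} v$ (the Jacobian $|\Lambda|^{-1/2}$ cancels the prefactor $|\Lambda|^{1/2}$ in $\vpL$) to pass to the isotropic density $\varphi$. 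Decomposing $\tilde\eta = c\hat{\tilde v} + \tilde\zeta$ with $\tilde\zeta \perp \hat{\tilde v}$, radial monotonicity of $\varphi$ forces $\sup_r \varphi(\tilde\eta - \tilde v r) = \varphi(\tilde\zeta)$ precisely on the slab $\{c/|\tilde v| \in R\}$, while the two complementary tails recombine by reflection symmetry of $\varphi$ to contribute $\int_{\bbR^d}\varphi = 1$; the slab itself contributes $|R|\,|\tilde v|\int_{\hat{\tilde v}^\perp}\varphi(\tilde\zeta)\,d\tilde\zeta$. To translate back to the original coordinates, extend any orthonormal basis $\{e_1,\ldots,e_{d-1}\}$ of $v^\perp$ by $\hat v := v/|v|$: the defining relation of $v^\perp$ gives $\hat v'\Lambda E = 0$, so $[\hat v\mid E]'\Lambda[\hat v\mid E]$ is block-diagonal, and taking determinants yields $(v'\Lambda v)\det(E'\Lambda E) = |v|^2\det(\Lambda)$, which is exactly the Jacobian required to rewrite $|\tilde v|\int_{\hat{\tilde v}^\perp}\varphi(\tilde\zeta)\,d\tilde\zeta = |v|\int_{v^\perp}\vpL(\zeta)\,d\zeta$.

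What remains is elementary case analysis on $(\sigma,\tau)$: one checks $\int \bone{R \ne \emptyset}\,d\sigma\,\delta e^{-\delta\tau}\,d\tau = t + 1/\delta$ and $\int |R|\,d\sigma\,\delta e^{-\delta\tau}\,d\tau = t/\delta$ (the sub-cases $t \ge \tau$ and $t < \tau$ both give the same answer). Integrating against $\PI$ and collecting produces total exposure $\frac{\beta}{\delta y_i} + \frac{\beta t}{\delta y_i}\bigl\{\delta + \int |v|\int_{v^\perp}\vpL(\zeta)\,d\zeta\,\PI(dv\,d\Lambda)\bigr\}$, which is exactly \eqref{e:kap-surv}. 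The mixture interpretation is then immediate: setting $t=0$ gives $\P[\kappa_i > 0] = \exp(-\beta/(\delta y_i)) = \P[Y(x_i,0) \le y_i]$, identifying the atom; for $t>0$ the remaining mass is exponential with rate manifestly monotone in $\E|v|$. The main obstacle is the sausage integral combined with the Gram-determinant identity; once these are in hand the rest is bookkeeping.
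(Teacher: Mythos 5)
Your proposal reaches the correct formula and is, in substance, a valid proof, but it is organized quite differently from the paper's. The paper writes the exceedance set as a disjoint union of five pieces according to \emph{where} the running supremum is attained (interior CPA, the endpoints $s=0$ and $s=t$, birth, death), computes each piece separately, and invokes a time-reversal symmetry to equate two pairs of them; summing gives $\nu(A)=\frac{\beta}{\delta y}(1+\delta t)$ plus the speed-dependent term. You instead fix $(\sigma,\tau,v,\Lambda)$, evaluate the entire spatial integral of the running supremum in one stroke by whitening with $\Lambda^{1/2}$ and splitting into a slab (the analogue of the paper's $A_1$) plus two caps whose union is all of $\bbR^d$ (the analogue of the paper's $A_2$--$A_5$), and only then integrate over $(\sigma,\tau)$ using $\int\bone{R\ne\emptyset}\,d\sigma\,\delta e^{-\delta\tau}d\tau=t+1/\delta$ and $\int|R|\,d\sigma\,\delta e^{-\delta\tau}d\tau=t/\delta$. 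This is cleaner: it avoids the half-space bookkeeping and the time-reversal appeals, and your whitened rate $\sqrt{v'\Lambda v}\int_{\hat{\tilde v}^{\perp}}\varphi(\tilde\zeta)\,d\tilde\zeta$ yields the paper's Gaussian value $(v'\Lambda v/2\pi)^{1/2}$ of Corollary \ref{cor:gausswaittime} immediately. The marginal computation is the same as the paper's.

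The one step that is wrong as written is the Gram-determinant identity. With $E$ a Euclidean-orthonormal basis of $v^\perp=\{\zeta:\,v'\Lambda\zeta=0\}$, the matrix $[\hat v\mid E]$ is \emph{not} orthogonal: $\hat v$ is Euclidean-orthogonal to $\mathrm{span}(E)=(\Lambda v)^{\perp}$ only when $v$ is an eigenvector of $\Lambda$, and in general $|\det[\hat v\mid E]|=v'\Lambda v/(|v|\,|\Lambda v|)$. Taking determinants of the block-diagonal matrix therefore gives $(v'\Lambda v)\det(E'\Lambda E)=\det([\hat v\mid E])^2\,|v|^2\det\Lambda$, i.e.\ $\det(E'\Lambda E)=(v'\Lambda v)\det\Lambda/|\Lambda v|^2$, not $|v|^2\det\Lambda/(v'\Lambda v)$; e.g.\ for $\Lambda=\mathrm{diag}(4,1)$ and $v=(1,1)'$ your identity reads $100/17=8$. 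Consequently, if $d\zeta$ on $v^\perp$ is read as $(d-1)$-dimensional surface measure, your whitened expression equals $\frac{v'\Lambda v}{|\Lambda v|}\int_{v^\perp}\vpL(\zeta)\,d\zeta$, which agrees with $|v|\int_{v^\perp}\vpL(\zeta)\,d\zeta$ only when $v$ is an eigenvector of $\Lambda$. This is a defect in the translation step, not in your probability computation: the invariant content of your sausage integral is correct and matches \eqref{eq:gausswaitmrg}, and the paper's own Appendix makes the same conflation (the asserted Jacobian ``$|v|$'' for $\xi\mapsto(\Delta,\zeta)$ and identity \eqref{e:vpL-vperp} presuppose a particular identification of $v^\perp$ with $\bbR^{d-1}$ rather than surface measure). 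To make your write-up airtight, either prove and use the corrected identity, carrying the factor $v'\Lambda v/|\Lambda v|$, or state explicitly that $\int_{v^\perp}\vpL(\zeta)\,d\zeta\,|v|$ is to be interpreted as the whitened cross-section $\sqrt{v'\Lambda v}\int_{\hat{\tilde v}^{\perp}}\varphi(\tilde\zeta)\,d\tilde\zeta$; with either repair, your derivation of \eqref{e:kap-surv} and the mixture interpretation stand.
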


The next result shows that $Z(x_1,x_2)$ and $V(x)$ obey the central limit theorem,
and that the convergence rate will in most cases be uniform over all points
in the index set. This implies that one can use two-sample $t$ statistics to 
test $H_0 : \M = 0$ for our procedure when the data originate from a MSV process.
\begin{corollary} \label{cor:BerryEsseen}
The distribution of $Z(x_1,x_2)$ is stochastically dominated by a mixture of an 
exponential and an atom at zero. Therefore
\be
J^{-1} \sum_{j=1}^J z_j(x_1,x_2), \quad J^{-1} \sum_{j=1}^J v_j(x),
\ee
suitably centered and scaled, both obey the central limit theorem and converge 
to Gaussian in the Wasserstein-1 and Kolmogorov metrics. If further
\be \label{eq:UniformRate}
\sup_{x \in \mc X} \int_{\bbR^d \times \cP^d\times v^\perp} \vpL(\zeta)\,
     d\zeta \,|v|\,\PI(dv\,d\Lambda) < \infty,
\ee
then the convergence rate is uniform over all $x$.
\end{corollary}

Theorem \ref{thm:marginals} also gives immediately the distribution of $Y^*$.
\begin{corollary}[Distribution of $Y^*$] \label{cor:maxmsv}
 The distribution of $Y(x,t) = \P[Y^*(x,t) < y]$ is given by 
\eqref{e:kap-surv}.
\end{corollary}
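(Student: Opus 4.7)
The plan is to reduce the corollary to Theorem \ref{thm:marginals} by means of a simple event-equivalence argument. Specifically, the first-exceedance time at location $x_i$ is defined by $\kappa_i = \inf\{t \ge 0 : Y(x_i,t) > y_i\}$, while the maximal MSV process is $Y^*(x_i,t) = \sup_{0 \le s \le t} Y(x_i,s)$. I would first observe that
\[
\{Y^*(x_i,t) \le y_i\} \;=\; \{\kappa_i > t\},
\]
since the left-hand event says that $Y(x_i,s) \le y_i$ for every $s \in [0,t]$, and the right-hand event says that no exceedance of the level $y_i$ has occurred by time $t$; by the definitions these two statements are logically identical. Taking probabilities and applying Theorem \ref{thm:marginals} then yields the claimed formula \eqref{e:kap-surv} for $\P[Y^*(x_i,t) \le y_i]$.

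The only remaining issue is the strict-versus-non-strict inequality appearing in the statement of the corollary. I would handle this by noting that Theorem \ref{thm:marginals} already shows that $Y(x_i,t)$ has a continuous (unit Fr\'echet) marginal distribution, and an essentially identical Poisson-superposition calculation shows that $Y^*(x_i,t)$ has no atoms in $(0,\infty)$ either. Hence $\P[Y^*(x_i,t) = y_i] = 0$, so $\P[Y^*(x_i,t) < y_i] = \P[Y^*(x_i,t) \le y_i]$ and the formula in \eqref{e:kap-surv} applies to both.

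The main step, then, is purely bookkeeping — identifying the two events — and I do not anticipate any genuine obstacle. The closest thing to a subtlety is making sure the sample paths are well-enough behaved that the supremum defining $Y^*$ coincides with the supremum over a countable dense subset (so that measurability and the equivalence of events hold pointwise); this follows from the fact that at each time $t$ only finitely many storms with $\sigma_j \le t < \sigma_j + \tau_j$ contribute to $Y(x_i,t)$, so $Y(x_i,\cdot)$ is a maximum of finitely many continuous functions on each bounded time interval away from the countable set of birth/death epochs, which suffices for the argument.
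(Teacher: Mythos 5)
Your proposal is correct and follows essentially the same route as the paper, which disposes of the corollary in one line by noting that $[Y^*(x,t)<y]$ and $[\kappa(y)>t]$ are identical events and invoking Theorem \ref{thm:marginals}. Your extra care about the strict versus non-strict inequality (via the absence of atoms of $Y^*(x_i,t)$ on $(0,\infty)$) is a reasonable refinement of the same argument rather than a different approach.
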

Thus the waiting time distribution and the distribution of $Y^*(x,t)$,
unlike the marginal distribution of $Y(x,t)$, depends on velocity and
shape. For Gaussian kernels $\varphi$, \eqref{e:kap-surv} has a simple
expression:
\begin{corollary}[Waiting time distribution for Gaussian
  kernels] \label{cor:gausswaittime} For Gaussian kernels
  $\varphi(z) = (2 \pi)^{-d/2} \exp(-z'z/2)$,
\be
 \P[V(x) >t] &=
     \exp\left(- \frac{\beta}{\delta y} - t\frac{\beta}{\delta y}
     \set{\delta + \E_\PI \left[\cet{v'\Lambda v/2\pi}^{\half}\right]}\right).
     \label{eq:gausswaitmrg}
 \ee
\end{corollary}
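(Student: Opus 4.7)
The plan is to specialize \eqref{e:kap-surv} by substituting the Gaussian form of $\vpL$ and evaluating the resulting hyperplane integral. Since the prefactor $-\beta/(\delta y_i)$ and the additive $\delta$ inside the braces of \eqref{e:kap-surv} do not depend on $\varphi$, comparing \eqref{e:kap-surv} with \eqref{eq:gausswaitmrg} shows that it suffices to prove the pointwise identity
\[
\int_{v^\perp}\vpL(\zeta)\,d\zeta\;\cdot\;|v|\;=\;(v'\Lambda v/2\pi)^{1/2},
\]
since integrating this against $\PI(dv\,d\Lambda)$ then reproduces the expectation on the right side of \eqref{eq:gausswaitmrg}.

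To verify this pointwise identity I would recognize $\vpL(\zeta)=(2\pi)^{-d/2}|\Lambda|^{1/2}\exp(-\zeta'\Lambda\zeta/2)$ as the density of $Z\sim N(0,\Lambda^{-1})$ and $v^\perp=\{\zeta:\,v'\Lambda\zeta=0\}$ as the zero level set of the linear functional $\ell(\zeta):=v'\Lambda\zeta$. Under this Gaussian law, $\ell(Z)\sim N(0,\,v'\Lambda v)$, so the univariate marginal density of $\ell(Z)$ at zero equals $(2\pi v'\Lambda v)^{-1/2}$. Disintegrating Lebesgue measure on $\bbR^d$ along $\ell$ (equivalently, along the Euclidean normal direction $\Lambda v/\|\Lambda v\|$ to $v^\perp$), the integral of $\vpL$ over $v^\perp$ equals this marginal density times the Jacobian of the disintegration; paired with the external $|v|$ factor, the $v$- and $\Lambda$-dependent Jacobian terms collapse to give $\sqrt{v'\Lambda v/2\pi}$. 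A useful consistency check is $d=1$, where $v^\perp=\{0\}$ and the left-hand side reduces to $|v|\,\vpL(0)=|v|\sqrt{\Lambda/2\pi}=\sqrt{v'\Lambda v/2\pi}$, matching.

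Plugging this simplification back into \eqref{e:kap-surv} and pulling the pointwise value under the $\PI$-integral yields \eqref{eq:gausswaitmrg} directly; the rest is pure substitution. The only nontrivial step is the Gaussian hyperplane computation, and the main bookkeeping obstacle is keeping the Jacobian implicit in the parametrization of $v^\perp$ consistent with the measure convention used in the derivation of \eqref{e:kap-surv}.
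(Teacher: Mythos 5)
Your overall route coincides with the paper's: specialize \eqref{e:kap-surv} to the Gaussian kernel by evaluating $\int_{v^\perp}\vpL(\zeta)\,d\zeta\,\cdot|v|$, which the paper does by taking the ratio of the full-space Gaussian integral to the integral over the span of $v$ (see \eqref{e:Rd}--\eqref{e:vpL-vperp}), whereas you use the marginal density of a linear functional of $Z\sim N(0,\Lambda^{-1})$. Your final formula is correct, but the step you summarize as ``the Jacobian terms collapse'' is exactly where the argument, as literally described, would fail, because the pointwise identity you set out to prove is convention-dependent. If $d\zeta$ denotes Euclidean surface measure on the hyperplane $v^\perp=\{\zeta:\,v'\Lambda\zeta=0\}$ and you disintegrate along the Euclidean normal $\Lambda v/|\Lambda v|$, the coarea factor is $|\nabla(v'\Lambda\zeta)|=|\Lambda v|$, so
\begin{equation*}
\int_{v^\perp}\vpL(\zeta)\,\mathcal{H}^{d-1}(d\zeta)=\frac{|\Lambda v|}{\sqrt{2\pi\,v'\Lambda v}},
\end{equation*}
and multiplying by the external $|v|$ gives $|v|\,|\Lambda v|/\sqrt{2\pi\,v'\Lambda v}$, which equals $\sqrt{v'\Lambda v/2\pi}$ only when $\Lambda v$ is parallel to $v$ (Cauchy--Schwarz). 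For instance $d=2$, $\Lambda=\mathrm{diag}(1,4)$, $v=(1,1)'$ yields $\sqrt{34/(10\pi)}\neq\sqrt{5/(2\pi)}$. Your $d=1$ sanity check cannot detect this, since in one dimension every $v$ is trivially an eigenvector of $\Lambda$.

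The missing piece is to recognize that the factor $|v|$ in \eqref{e:kap-surv} arises from the change of variables $\xi\mapsto(\Delta,\zeta)$ in the derivation of $\nu(A_1)$ (see \eqref{e:nu-a1}), and is stated relative to the same normalization of $d\zeta$ on $v^\perp$ that underlies \eqref{e:vpL-vperp}; only the product (change-of-variables Jacobian)\,$\times$\,(hyperplane integral) is convention-free. You can finish in either of two ways: adopt the paper's convention throughout, in which case the hyperplane integral is $(v'\Lambda v/2\pi)^{\half}/|v|$, obtained in the appendix as a ratio of \eqref{e:Rd} to \eqref{e:para-v} precisely so that the normalization on $v^\perp$ cancels; or insist on Euclidean surface measure, in which case the correct Jacobian is $v'\Lambda v/|\Lambda v|$ rather than $|v|$, and its product with $|\Lambda v|/\sqrt{2\pi\,v'\Lambda v}$ is again $\sqrt{v'\Lambda v/2\pi}$. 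With that single step made explicit, your marginal-density argument delivers \eqref{eq:gausswaitmrg} exactly as the paper's ratio-of-integrals computation does.
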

\noindent
Thus in the Gaussian case,
the marginal waiting times converge to zero in distribution as $\E_\PI
\big[\sqrt{v' \Lambda v}\big] \to \infty$.

\subsection{Results on multivariate marginals and $Z(x_1,x_2)$}
We now give results on the joint distribution and the distribution of waiting 
times between exceedances. 
Theorem \ref{thm:joint-cdf}
gives the joint distribution of the max-stable velocity process at finite
collections of locations $\{x_i\}\iN$ and times $\{t_i\}\iN$.
\begin{theorem}[Joint CDF] \label{thm:joint-cdf}
Let $Y\sim\msv(\beta,\delta,\PI,\varphi)$ be a max-stable velocity process and
let $\{x_i\}\iN\subset\bbR^d$ and $\{t_i\}\iN\subset\bbR$ for some integer
$n\in\bbN$.  The joint CDF for $\{Y(x_i,t_i)\}\iN$ is given by
\begin{subequations}\label{e:mv-cdf}
\begin{align}
   F(y_1,\dots,y_n)&:= \P\big(\cap\iN [Y(x_i,t_i)\le y_i]\big)
                    = \exp\big(-\nu(\cup\iN B_i)\big)\label{e:mv-cdf-a}
\intertext{for $B_i:=\set{\omega:~Y(x_i,t_i)>y_i}$, with}
  \nu\big(\cup\iN B_i\big) &= \sum\iN\nu(B_i) - \sum_{i\ne j}\nu(B_i\cap B_j)
                     + \sum_{i\ne j\ne k}\nu(B_i\cap B_j\cap B_k) - \cdots,
                  \label{e:mv-cdf-b}
\intertext{the alternating sum of terms}
  \nu\big(\cap_{i\in J}B_i\big) &= \frac\beta\delta e^{-\delta(t^J-t_J)}
  \int_{\bbR^d\times \cA} \min\jJ \set{\frac{\vpL(x_j-vt_j -z)}{y_j}}
     dz\,\PI(dv\,d\Lambda)\label{e:mv-cdf-c}
\end{align}
\end{subequations}
for subsets $J\subseteq\set{1,\cdots,n}$.  Here $t^J:=\max\{t_j\}$
denotes the maximum and $t_J:=\min\{t_j\}$ the minimum of $\{t_j\}\jJ$.
\end{theorem}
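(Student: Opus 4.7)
The plan is to exploit the Poisson structure of $\cN$ together with the void-probability formula, reducing the joint CDF to a computation of an intensity-measure integral. First I would rewrite each event $[Y(x_i,t_i)\le y_i]$ as a statement about the Poisson random measure. Since $Y(x_i,t_i)=\sup_j u_j k(x_i,t_i;\xi_j,\sigma_j,\tau_j,a_j)$, the event $[Y(x_i,t_i)\le y_i]$ holds iff no atom of $\cN$ lies in
\[
B_i:=\{(u,\xi,\sigma,\tau,a)\in\Omega:\ u\,k(x_i,t_i;\xi,\sigma,\tau,a)>y_i\}.
\]
Consequently $\cap_{i=1}^n[Y(x_i,t_i)\le y_i]$ coincides with $\{\cN(\cup_i B_i)=0\}$, and the void-probability formula for the Poisson measure $\cN\sim\mathrm{Po}(\nu)$ delivers \eqref{e:mv-cdf-a}. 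The inclusion--exclusion identity \eqref{e:mv-cdf-b} then follows by additivity of $\nu$ applied to $\cup_i B_i$.

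The substantive work is the computation of $\nu(\cap_{i\in J}B_i)$ in \eqref{e:mv-cdf-c}. I would integrate $\nu(d\omega)=\beta u^{-2}du\,d\xi\,d\sigma\,\delta e^{-\delta\tau}d\tau\,\PI(da)$ in stages. The condition $u\,\vpL(x_j-\xi-v(t_j-\sigma))>y_j$ for all $j\in J$ is equivalent to
\[
u>\max_{j\in J}\frac{y_j}{\vpL(x_j-\xi-v(t_j-\sigma))},\qquad \sigma\le t_J,\qquad \sigma+\tau>t^J,
\]
where the support indicator $\one_{\sigma\le t<\sigma+\tau}$ in $k$ forces the last two constraints for every $j\in J$. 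Integrating $\beta u^{-2}du$ from the maximum to $\infty$ produces a factor $\beta\min_{j\in J}\vpL(x_j-\xi-v(t_j-\sigma))/y_j$. Integrating $\delta e^{-\delta\tau}$ from $t^J-\sigma$ to $\infty$ produces $e^{-\delta(t^J-\sigma)}$.

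The final step is to simplify what remains by the change of variables $z:=\xi-v\sigma$ (for fixed $v,\sigma$, this is a translation with unit Jacobian), so that
\[
x_j-\xi-v(t_j-\sigma)=x_j-vt_j-z.
\]
The integrand then depends on $\sigma$ only through the overall factor $e^{-\delta(t^J-\sigma)}\one_{\sigma\le t_J}$, whose $\sigma$-integral evaluates to $\delta^{-1}e^{-\delta(t^J-t_J)}$. Collecting constants and what remains of the $z,v,\Lambda$ integral yields exactly \eqref{e:mv-cdf-c}.

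The main obstacle I anticipate is purely bookkeeping: justifying Fubini for the iterated integral (the inner integrand is nonnegative, so Tonelli applies as long as the min-of-kernels integral is locally finite, which holds because each $\vpL$ is a probability density), and being careful that the support indicator in $k$ provides the correct constraints on $\sigma$ and $\tau$ simultaneously for every $j\in J$. Once those checks are in place, the rest is a direct calculation along the lines sketched above.
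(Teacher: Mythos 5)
Your proposal is correct and follows the same route as the paper's (very terse) proof: express the joint CDF as the void probability of $\cup_i B_i$, apply inclusion--exclusion, and compute $\nu(\cap_{j\in J}B_j)$ by integrating out $u$, $\tau$, and $\sigma$ under the constraints $\sigma\le t_J$, $\tau>t^J-\sigma$ after the change of variables $z:=\xi-v\sigma$, which is exactly the substitution the paper invokes. The staged integrals you describe reproduce the factor $\frac{\beta}{\delta}e^{-\delta(t^J-t_J)}$ and the $\min$-of-kernels integrand of \eqref{e:mv-cdf-c}, so no gap remains.
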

While the univariate marginal distributions of $Y(x,t)$ do
not depend on $\PI(dv\, d\Lambda)$ at all, higher order marginal
distributions do.  For example,

\begin{corollary}[Gaussian MSV process joint CDF at two
  points] \label{cor:gauss-joint-dist} For the Gaussian max-stable velocity
  process with $\varphi(z)=(2\pi)^{-d/2}\exp(-z'z/2)$, the bivariate
  distribution is given for $y_1,y_2>0$ by
  $F(y_1,y_2) =\exp\big(-\nu(B_1\cup B_2)\big)$ with
\begin{align}
\nu(B_1\cup B_2) &=  \frac\beta\delta\Big(1-e^{-\delta|t_2-t_1|}\Big)
                    \Big(\frac1{y_1}+\frac1{y_2}\Big)
         +  \frac\beta\delta e^{-\delta|t_2-t_1|}  \notag\\
       \times
          \int_{\cP^d\times\bbR^d}&\set{
          \frac1{y_1} \Phi\bigg(\frac{S_\Lambda(v)}{2}
                      - \frac{\log(y_1/y_2) } {S_\Lambda(v)}\bigg) +
          \frac1{y_2} \Phi\bigg(\frac{S_\Lambda(v)}{2}
                      - \frac{\log(y_2/y_1)} {S_\Lambda(v)}\bigg) }
          \, \PI(dv\,d\Lambda) \label{eq:Fgaussmain}
\end{align}
where $\Phi(\cdot)$ is the standard Gaussian CDF and
$S_\Lambda(v):=\sqrt{\Delta_v'\Lambda\Delta_v}$ for $\Delta_v := (x_2-x_1) -
(t_2-t_1)v$.
\end{corollary}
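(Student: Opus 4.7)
The plan is to specialize Theorem \ref{thm:joint-cdf} to $n=2$ with Gaussian $\varphi$, and reduce the required integral to the classical Smith (1990) bivariate extremal calculation.

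By \eqref{e:mv-cdf-a}--\eqref{e:mv-cdf-b} with $n=2$,
\[
  \nu(B_1 \cup B_2) = \nu(B_1) + \nu(B_2) - \nu(B_1 \cap B_2).
\]
The singleton terms follow immediately from \eqref{e:mv-cdf-c} with $|J|=1$: since $t^J = t_J$ and $\int_{\bbR^d} \vpL(x - vt - z)\,dz = 1$ for every $(v,\Lambda)$, we get $\nu(B_i) = \beta/(\delta y_i)$, in agreement with the Fr\'echet marginal already established in Theorem \ref{thm:marginals}. The pairwise term is
\[
  \nu(B_1 \cap B_2) = \frac{\beta}{\delta} e^{-\delta|t_2-t_1|}
     \int_{\cP^d\times\bbR^d} I(v,\Lambda)\,\PI(dv\,d\Lambda),
\]
with
\[
  I(v,\Lambda) := \int_{\bbR^d} \min\!\Big\{
     \tfrac{\vpL(x_1 - vt_1 - z)}{y_1},\,
     \tfrac{\vpL(x_2 - vt_2 - z)}{y_2}\Big\}\, dz.
\]

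The main work is computing $I(v,\Lambda)$ for Gaussian $\varphi$. I would translate by setting $u := x_1 - vt_1 - z$, so that $x_2 - vt_2 - z = u + \Delta_v$ with $\Delta_v := (x_2-x_1)-(t_2-t_1)v$, giving
\[
  I(v,\Lambda) = \int_{\bbR^d} \min\!\big\{\vpL(u)/y_1,\,\vpL(u+\Delta_v)/y_2\big\}\, du.
\]
Taking logs of the two Gaussian densities, the region where the first term is smaller is the half-space $\{u : u'\Lambda\Delta_v < \log(y_1/y_2) - S_\Lambda(v)^2/2\}$, with $S_\Lambda(v)^2 = \Delta_v'\Lambda\Delta_v$. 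On that half-space I integrate $\vpL(u)/y_1$; since $\vpL$ is the density of $U\sim N(0,\Lambda^{-1})$, the linear form $U'\Lambda\Delta_v$ is $N(0, S_\Lambda(v)^2)$, so this integral equals $(1/y_1)\,\Phi\!\big(\log(y_1/y_2)/S_\Lambda(v) - S_\Lambda(v)/2\big)$. On the complementary half-space I substitute $u' = u + \Delta_v$ and integrate $\vpL(u')/y_2$ against an analogous Gaussian tail. Using $\Phi(-x)=1-\Phi(x)$, and combining with the identity $\int \vpL(u)/y_1\,du + \int\vpL(u+\Delta_v)/y_2\,du = 1/y_1 + 1/y_2$, I would obtain
\[
  I(v,\Lambda) = \Big(\tfrac{1}{y_1} + \tfrac{1}{y_2}\Big)
  - \tfrac{1}{y_1}\Phi\!\Big(\tfrac{S_\Lambda(v)}{2} - \tfrac{\log(y_1/y_2)}{S_\Lambda(v)}\Big)
  - \tfrac{1}{y_2}\Phi\!\Big(\tfrac{S_\Lambda(v)}{2} - \tfrac{\log(y_2/y_1)}{S_\Lambda(v)}\Big).
\]

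Substituting this expression into $\nu(B_1 \cup B_2) = (\beta/\delta)(1/y_1 + 1/y_2) - \nu(B_1 \cap B_2)$ and grouping the constant term with the pairwise term produces the factor $(1-e^{-\delta|t_2-t_1|})(1/y_1 + 1/y_2)$ and leaves exactly the integral in \eqref{eq:Fgaussmain}. The main technical obstacle is the Gaussian min-integral $I(v,\Lambda)$; everything else is bookkeeping from Theorem \ref{thm:joint-cdf}. A minor care point is the degenerate case $S_\Lambda(v) = 0$ (collinearity of $\Delta_v$ with $\ker\Lambda$, or $\Delta_v = 0$), which corresponds to the two spatio-temporal points coinciding along a trajectory and is handled by taking the limit $S_\Lambda(v)\downarrow 0$ so that both $\Phi$ terms tend to indicators and $I \to \min(1/y_1, 1/y_2)$, consistent with the upper-tail dependence of the process.
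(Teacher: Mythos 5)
Your proposal is correct and follows essentially the same route as the paper: inclusion--exclusion for $\nu(B_1\cup B_2)$, reduction of $\nu(B_1\cap B_2)$ to the Gaussian min-integral with shift $\Delta_v$, splitting by the half-space where the minimum is attained, and evaluating each piece as a $\Phi$ value with argument $\pm\log(y_1/y_2)/S_\Lambda(v)-S_\Lambda(v)/2$. The only cosmetic differences are that you invoke the pairwise term of Theorem \ref{thm:joint-cdf} directly (the paper re-derives it by integrating out $u,\tau,\sigma$ and recentering $\xi$) and you compute the half-space integral via the law of the linear form $U'\Lambda\Delta_v\sim N(0,S_\Lambda(v)^2)$ rather than the paper's explicit decomposition parallel and orthogonal to $\Delta_v$ in the $\Lambda$ metric.
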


Equation \eqref{eq:Fgaussmain} generalizes \citet [equation 3.1]
{smith1990max}, and reduces to it if $t_1=t_2$ and if $\PI$ is a unit point
mass. It leads to an explicit expression for the likelihood function for the
Gaussian max-stable velocity process with observations at two locations and times. 
This expression could be used to fit the max-stable velocity process to data
by maximum composite likelihood.

Our final result is for the distribution of
\be
Z^*(x_1,x_2) = |V(x_2) - V(x_1)|, 
\ee
which is more convenient to study than $Z(x_1,x_2)$, since we need 
not be concerned about the order in which the exceedances occur. This 
considerably simplifies the construction of sets to integrate over in obtaining 
the following result. The subsequent corollary gives a stochastic 
ordering result for either $Z(x_1,x_2)$ or $Z(x_2,x_1)$ every $x_1,x_2 \in \mc 
X$. Results are given here for the Gaussian kernel;
expressions for the general case can be found in the Appendix.

\begin{theorem}[Stochastic bound for survival function with nonzero
    velocity] \label{thm:stochbound} Suppose
  $Y\sim\msv(\beta,\delta,\PI,\varphi)$ is a Gaussian max-stable velocity
  process.  Then for any two points $x_1,x_2$ and thresholds $y_1,y_2$,
  $\jwtime$ satisfies the stochastic ordering
\be \P[Z^*(x_1,x_2) > t] &\le \exp\big\{-\nu(A) \big\}, \ee
with
\be \nu(A) =
  \frac{\beta}{\sqrt{2\pi}} \int_{\substack{\bbR^d \times \cP^d\\
            \cap \{-t \le \Delta_{12}\le t\}}}
      &(t-\Delta_{12})\,e^{-\delta \Delta_{12}}
     \bigg\{ \frac{1}{y_1} \Phi\bigg( -\frac{S^{\perp}_{\Lambda}(v)}{2} +
              \frac{\log(y_1/y_2)}{S^{\perp}_{\Lambda}(v)}
        \bigg) \\
        &+ \frac{1}{y_2} \Phi\bigg( -\frac{S^{\perp}_{\Lambda}(v)}{2}
         - \frac{\log(y_1/y_2)}{S_{\Lambda}^{\perp}(v)}
        \bigg) \bigg\} \sqrt{v'\Lambda v} \PI(d\Lambda\, dv)
        , \label{eq:jwtimeboundgauss} 
\ee
where $S^{\perp}_{\Lambda}(v) := \{(x_2-x_1)'\Lambda
 (x_2-x_1)^{\perp}\}^{1/2}$, $\Delta_{12} := v'\Lambda(x_2-x_1)/v'\Lambda v$,
 and $(x_2-x_1)^{\perp} := (x_2-x_1)-\Delta_{12} v$ is the projection of
 $(x_2-x_1)$ into the orthogonal complement of $v$ in the $\Lambda$ metric.
\end{theorem}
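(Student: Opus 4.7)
My approach is a Poisson void argument. The key observation is that if any single point $\omega\in\mc N$ corresponds to a storm whose kernel exceeds $y_1$ at $x_1$ and $y_2$ at $x_2$, each at some time in $[0,t]$, then both $\kappa_1\le t$ and $\kappa_2\le t$, so $\jwtime=|\kappa_1-\kappa_2|\le t$ automatically. Letting $A\subset\Omega$ denote the set of such configurations, $\{\jwtime>t\}\subseteq\{\mc N(A)=0\}$, and the Poisson void property gives $\P[\jwtime>t]\le e^{-\nu(A)}$. It therefore suffices to exhibit a tractable subset $A'\subseteq A$ whose intensity equals the stated $\eta(A)$, from which $\P[\jwtime>t]\le e^{-\nu(A)}\le e^{-\nu(A')}=e^{-\eta(A)}$.

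I would take $A'$ to consist of those $\omega=(u,\xi,\sigma,\tau,v,\Lambda)$ satisfying: (a) both closest-approach times $s_i^*=\sigma+v'\Lambda(x_i-\xi)/v'\Lambda v$ lie in $[0,t]\cap[\sigma,\sigma+\tau]$; and (b) $u\,\vpL((x_i-\xi)^\perp)>y_i$ for $i=1,2$, where $(x_i-\xi)^\perp$ denotes the $\Lambda$-perpendicular component. Since $\vpL$ attains its trajectory-wise maximum at closest approach, these conditions force both exceedances to occur at $s_i^*\in[0,t]$, so $A'\subseteq A$. To compute $\nu(A')$, change variables $\xi=x_1-\zeta-vr_1$ with $\zeta\in v^\perp$ and $r_1=s_1^*-\sigma\in\bbR$; in $\Lambda$-orthonormal coordinates on $v^\perp$ the Jacobian evaluates to $\sqrt{v'\Lambda v/|\Lambda|}$. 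The closest-approach offset at $x_2$ then becomes $\zeta+\Delta_\perp$ with $\Delta_\perp:=(x_2-x_1)-\Delta_{12}v$, and the two closest-approach times are separated by $\Delta_{12}=v'\Lambda(x_2-x_1)/v'\Lambda v$.

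The integrations now proceed in sequence. First, $\int_{u^*}^\infty u^{-2}\,du=\min(\vpL(\zeta)/y_1,\vpL(\zeta+\Delta_\perp)/y_2)$ where $u^*:=\max_i\{y_i/\vpL(\cdot)\}$. Second, integrating this minimum over $\zeta\in v^\perp$ against the Gaussian $\vpL$, via $\min(a,b)=a\bone{a\le b}+b\bone{b<a}$ and a scalar Gaussian tail calculation along $\Delta_\perp$, produces $(|\Lambda|^{1/2}/\sqrt{2\pi})\{(1/y_1)\Phi(-S/2+\log(y_1/y_2)/S)+(1/y_2)\Phi(-S/2-\log(y_1/y_2)/S)\}$ with $S=S^\perp_\Lambda(v)$; the $|\Lambda|^{1/2}$ factors cancel against the Jacobian, leaving $\sqrt{v'\Lambda v}/\sqrt{2\pi}$. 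Third, the $(\sigma,r_1,\tau)$ integration under the constraints $s_i^*\in[0,t]$ and $r_i\in[0,\tau]$ yields $(t-|\Delta_{12}|)_+\cdot e^{-\delta|\Delta_{12}|}$, the exponential $\tau$ integral absorbing the $r_1$ length factor. Restricting $(v,\Lambda)$ to $|\Delta_{12}|\le t$ and integrating against $\PI$ recovers the claimed $\eta(A)$, the sign convention on $\Delta_{12}$ resolved WLOG by the symmetry $(x_1,y_1)\leftrightarrow(x_2,y_2)$. The principal obstacle is the Gaussian minimum integration over the $(d-1)$-dimensional subspace $v^\perp$, which generalizes the bivariate computation underlying Corollary \ref{cor:gauss-joint-dist} and requires careful bookkeeping of both the non-Euclidean Jacobian and the reduction to a scalar Gaussian tail along $\Delta_\perp$.
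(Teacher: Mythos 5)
Your proposal is correct and follows essentially the same route as the paper: the paper likewise bounds $\P[\jwtime>t]$ by the Poisson void probability of the set of particles whose closest-point-of-approach exceedances occur at both $x_1$ and $x_2$ within $[0,t]$, and evaluates the $\nu$-measure of that set via the same parallel/perpendicular change of variables, Gaussian tail reduction to the $\Phi$ terms, and time integration producing $(t-\Delta_{12})e^{-\delta\Delta_{12}}$ on $\{|\Delta_{12}|\le t\}$. The only difference is cosmetic: the paper splits the set into four pieces $A_{ij}$ indexed by which CPA occurs first and which threshold demands the larger mass $u$, whereas you integrate the minimum directly and handle the time-ordering by the symmetry $(x_1,y_1)\leftrightarrow(x_2,y_2)$.
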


\begin{corollary} \label{cor:MinZ}
 At least one of
 \be
 \P[Z(x_1,x_2) > t] \le \exp\{-\nu(A)/2\}
 \ee
 or
 \be
 \P[Z(x_2,x_1) > t] \le \exp\{-\nu(A)/2\}
 \ee
 holds. Also
 \be
 \P[Z(x_1,x_2) \wedge Z(x_2,x_2) > t] \le \exp\{-\nu(A)/2\}. 
 \ee
\end{corollary}

Theorem \ref{thm:stochbound} shows that for the Gaussian MSV process, 
$Z^*(x_1,x_2)$ -- and at least one of $Z(x_1,x_2)$ and $Z(x_2,x_1)$ -- 
converges to zero in probability if
\be 
\E_\PI\left[ \sqrt{v'\Lambda v} \Phi\left(-\sqrt{(x_2-x_1)' 
    \Lambda (x_2-x_1)^{\perp}}\right) \right] \to \infty.  
\ee
This condition is only slightly stronger than the condition $\E_\PI 
[\sqrt{v'\Lambda v}] \to \infty$, which is sufficient for the marginal
waiting times to converge to zero in probability based on \eqref
{eq:gausswaitmrg}.  Informally, the difference between these conditions is
that $\PI(dv\,d\Lambda)$ cannot place too much mass on $\Lambda$ with large
eigenvalues, which corresponds to extremely concentrated kernels.  This is
intuitive: it is difficult for the same point to cause an exceedance at two
different locations when the kernels are extremely concentrated, since the
volumes of space where exceedances can occur are very small.
This result and Theorem \ref{thm:marginals} together imply that 
the waiting time distributions are stochastically decreasing in mean speed,
and the distribution of $Z(x_1,x_2)$ is dominated by a mixture
of an atom at zero and an exponential distribution. 
Moreover, the stochastic ordering also implies that at least one of 
$\E [Z(x_1,x_2)]$ and $\E [Z(x_2,x_1)]$ is decreasing in mean speed, so the 
use of $M$ for $\d$ in defining $\gamma_\d$ is in some cases reasonable.

\section{Simulation}
In this section, a simulation study is constructed to illustrate the method. We 
simulate from a MSV process and then perform inference using our 
waiting time-based procedure.
The simulation is motivated by extreme weather events, where basic scientific
knowledge allows informative choices.  Data are simulated from a Gaussian
max-stable velocity process with attribute distribution
\begin{align*}
\PI(da) = \PI(dr\, d\phi\, d\Lambda) ~\propto~ &|\Lambda|^{(\nu-k-1)/2}
e^{-\text{tr}(\Psi^{-1} \Lambda)/2} ~r^{-3/2} e^{-(a (r-m)^2)/(2m^2 r)}
\nonumber \\ ~\times~ &\prod_{h=1}^{k-1} \left[\frac{1}{2} q e^{-q
    \phi_h} \bone{\phi_h>0} + \frac{1}{2} q e^{-q \phi_h} \bone{\phi_h<0}
  \right]\,dr\,d\phi \,d\Lambda,
\end{align*}
where $(r,\phi_1,\ldots,\phi_{k-1})$ is the polar parametrization of the
velocity $v$.  This corresponds to a Wishart distribution for $\Lambda$ with
degrees of freedom $\nu$ and shape $\Psi$, an inverse Gaussian distribution
for the magnitude of the velocity $r$ with parameters $m, a$, and wrapped
Laplace distributions with parameter $q$ for the angles
$(\phi_1,\ldots,\phi_{k-1})$ defining the direction of the velocity in
$\bb{R}^k$.  The storm lifetimes $\tau \sim \Exp(\delta)$ and the support
points $\xi$ follow a homogeneous spatial Poisson process with rate $\beta
d\xi$.  The intensity measure in the specification of the process $u
~\propto~ u^{-2}$ is improper.  For the simulation, we put $u \sim
\mathrm{Pareto}(u_{\min},1)$, the conditional distribution of $u$ given $u>
u_{\min}$.  Hyperparameters for the simulation, and their units,
are shown in Table \ref{tab:hyperparameters}.

\begin{center}
\captionof{table}{Hyperparameter choices for simulations}
\begin{tabular}{ccccccccccc}
  & $\beta$ & $u_{\min}$ & $\delta$ & $\Psi$ & $\nu$ & $m$ & $a$ & $q$ \\ 
  \hline 
  value & $1/(600)$ & 1 & 1/120 & $I_2$ & 7 & 1/10 & 1/2 & 1/2 \\
  units & $(500^2 \km^2 \hr)^{-1}$ & -- & $\hr^{-1}$ & $500^2 \km^2$ & 
  -- & $500 \km \hr^{-1}$ & $500 \km \hr^{-1}$ & -- \\
\end{tabular}\label{tab:hyperparameters}
\end{center}

The data are simulated on a $10 \times 10$ box $B$.  In order to inform
hyperparameter choices, this box is taken to roughly represent a $5\,000^2
\mathrm{km}^2$ area containing the continental United States and southern
Canada.  As a result, the distributions of velocity and lifetimes of storms
are chosen to approximate the behavior of weather events in this geographic
region.  To set the time scale and allow easier interpretation of results,
one unit of time in the simulation is considered one hour.  Support points of
the marginal process $\mc{N}(d\xi\, d\sigma)$ are sampled on $B \times
[0,T]$, with $T = 114 \times 365 \times 24=998\,640$, so that the number of
support points of $\mc{N}(d\xi\, d\sigma)$ are Poisson distributed with mean
$\beta \times T \times 100 \,(500^2 \km^2 \hr)^{-1}$.  The choice of 
$\beta = 1/600 \,(500^2 \km^2 \hr)^{-1}$ gives an average
of four storms a day forming in the region.  Storm lifetimes $\tau_j$ are
sampled for each support point from $\Exp(1/120)$, which gives an average
storm lifetime of five days.  Intensities, shapes, and velocities are sampled
from the specified distributions with the hyperparameters given in Table
\ref{tab:hyperparameters}.  The values $m = 0.1, a=1/2$ for the inverse
Gaussian distribution on $r$ gives an average speed of about $50 \km \hr^{-1}$.  
The parameter $q=0.5$ places most of the mass on easterly storm
tracks.

The value of the process $Y(x,t)$ is recorded at one million homogeneously
spaced time points from $[0,T]$ at the five locations $\{x_1 = (5,5),x_2 =
(5,5.5),x_3 = (1,1),x_4 = (8,8),x_5 = (3,5)\},$ as well as twenty additional
locations sampled uniformly on $B$.  The five fixed points should result in
the process at some pairs of locations being highly tail dependent, some
pairs weakly dependent, and some nearly independent.  After simulation,
waiting times between exceedances of $y(x)=\wh F^{-1}_{Y(x,\cdot)}(0.995)$ 
are calculated at every unique pair of points. We then
estimate $\gamma_\d$ for $\d = \AD, \KS, \ck_G$, and $\M$. Plots of $\wh \gamma_\d$
against Euclidean distance between pairs of points are shown in Figure 
\ref{fig:sims}. As expected, $\wh \gamma_\d(x_1,x_2)$ is decreasing in $\|x_1 - 
x_2\|_2$. For the $t$ statistic, decreasing $\gamma_\d(x_1,x_2)$ corresponds to 
a decrease in the absolute value of the statistic, which is what we observe. 
Throughout, we show the raw $t$ statistic instead of its absolute value since 
the $t$ statistic is negative when $\E [Z(x_1,x_2)] < \E [V(x_2)]$. As 
expected based on Theorem \ref{thm:stochbound}, the sign of the $t$ statistic 
tends to be negative when $\|x_1-x_2\|_2$ is small, but is equally likely to be 
positive or negative when $\|x_1-x_2\|_2$ is large. In addition, noticeable 
differences are observed between the behavior of $\wh \gamma_\d$ with $\d=\AD$, 
$\d = \KS$, and $\d = \ck_G$ as a function of distance. In particular, when $\d 
= \KS$, the statistic is somewhat noisier as a measure of dependence.

\begin{figure}[h]
\centering
 \begin{tabular}{cc}
\includegraphics[width=0.4\textwidth]{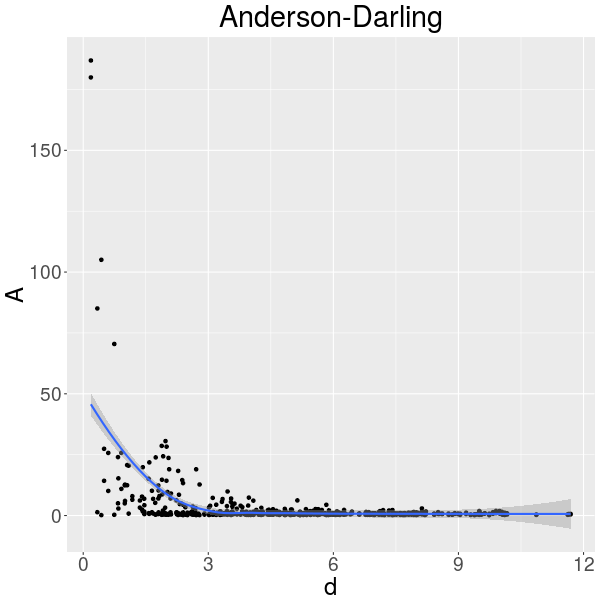} &
\includegraphics[width=0.4\textwidth]{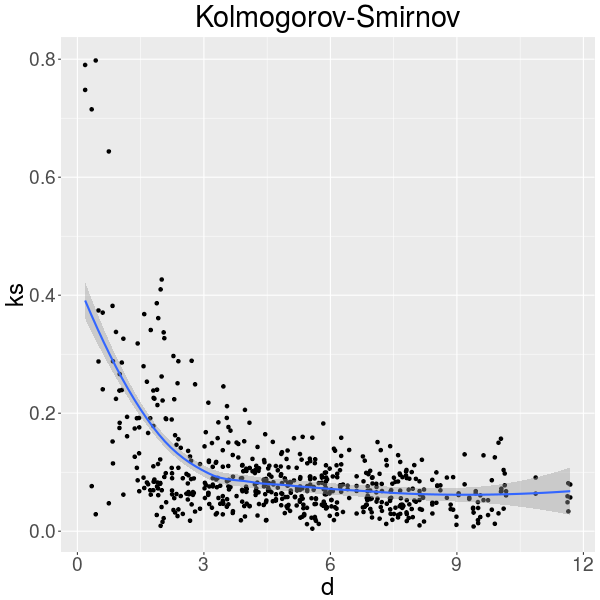} \\
\includegraphics[width=0.4\textwidth]{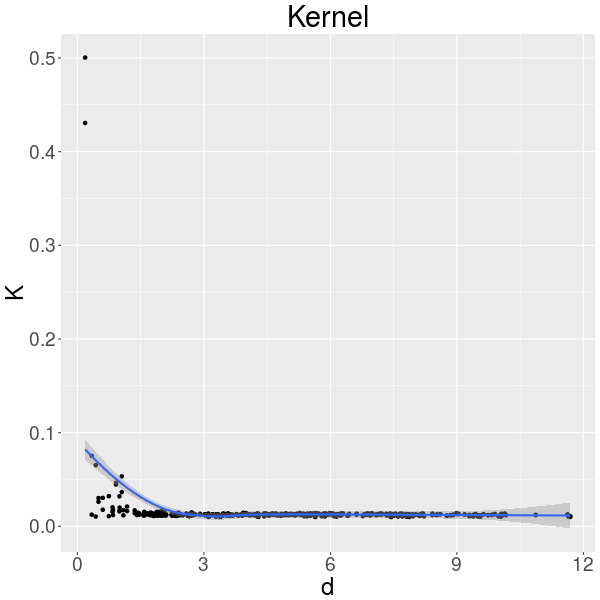} &
\includegraphics[width=0.4\textwidth]{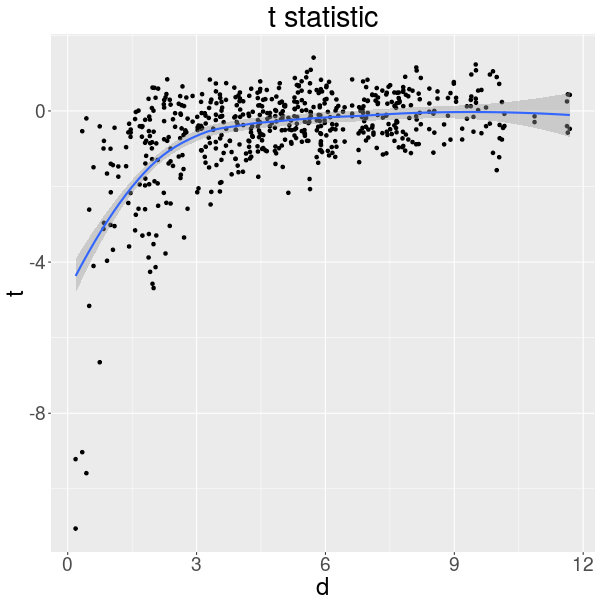} \\
\end{tabular}
\caption{$\wh \gamma_\d(x_1,x_2)$ as a function of $\|x_1 - x_2\|_2$ for 
different choices of $\d$. Results for $\ck_G$ are labeled ``\textnormal{Kernel},'' and $t$ statistics 
are shown rather than \textnormal{d=M}.} \label{fig:sims} 
\end{figure}

\FloatBarrier
\section{Applications}
The method is applied to three additional real data sets: (1) Daily 
precipitation data
for 25 weather stations in the United States for the period 1940--2014; (2)
Daily exchange rates for 12 currencies for the period 1986--1996; 
and (4) Electrical potential at 62
single neurons in the brain of a mouse exploring a maze, sampled at 500 Hz.

The first dataset is similar to the simulation study and the physical
heuristic that we introduced in Section \ref{sec:model}.  
The third--- the mouse electrophysiology data--- retains
an explicit spatial component, as the physical distance between neurons is
related to the speed at which signals can be propagated between them.  The
second application, like the Dow Jones application in Section \ref{sec:djia}, 
is financial. In these applications, the 
``spatial'' index $\mathcal X$ can be thought of as a coordinate in a latent
``attribute'' space, with the distances between assets reflecting similarity in the
factors that determine their price. No explicit reference to the index set 
$\mathcal X$ is necessary, however, to make the waiting times between threshold 
exceedances meaningful. In financial settings, these reflect the speed with 
which sentiment about particular asset classes propagates through the market.
Note that in financial
settings it is usually extremes in the \emph {left} tail that are of
particular interest, and thus it is useful to model with max-stable velocity
processes the negative of the observed data--- so exceedances of the negative
of a low threshold are the relevant events.

In choosing thresholds $\bar y(x)$ for analysis, the heuristic used is that
\be \label{eq:MinimumObs}
\min_{x_1, x_2 \in \mc X^s \times \mc X^s} \# \{z_j(x_1,x_2)\} \ge 75
\ee
where the set $\{z_j(x_1,x_2)\}$ is obtained using the procedure in 
\eqref{eq:z}, $\# A$ for a set $A$ indicates the cardinality of $A$, and $\mc 
X^s$ is the set of spatial locations at which the process is sampled.
Thresholds correspond to empirical quantiles of the observed data, and the same 
threshold is used for $\bar y(x_1), \bar y(x_2)$ in both the procedure for 
estimation of $\mc L(V(x_1))$ and $\mc L(V(x_2))$ and that for estimation of 
$\mc L(Z(x_1,x_2))$. We always use the empirical median for $\underbar y(x)$.
A consequence of
choosing thresholds in this way is that for some datasets, the empirical
quantile of the chosen threshold may be much more extreme than for others.
For example, the electrophysiology data has nearly two million observations,
so we can choose a threshold of $\bar y(x)=\wh F^{-1}_{Y(x,\cdot)}(0.995)$ 
for analysis; the exchange rate data has only about two thousand
observations, so the threshold chosen is $y_i=\wh F^{-1}_{Y(x,\cdot)}(0.90)$.  
For the Dow Jones data, extensive temporal clustering of extreme events results in the 
highest possible threshold that satisfies \eqref{eq:MinimumObs} being 
$y(x)=\wh F^{-1}_{Y(x,\cdot)}(0.90)$. For the 
precipitation data,
the threshold used is $y(x)=\wh F^{-1}_{Y(x,\cdot)}(0.975)$.
Examples
of single components of the four datasets are shown in Figure
\ref{fig:rawdat}.  In the case of the two financial datasets, the displayed
series and data used for our analysis are the log daily returns
$\log[y(x,t)/y(x,t-1)]$, following standard practice in finance.  The other
two figures show the raw data plotted in the time domain. 

\begin{figure}
\centering
\includegraphics[width=0.7\textwidth]{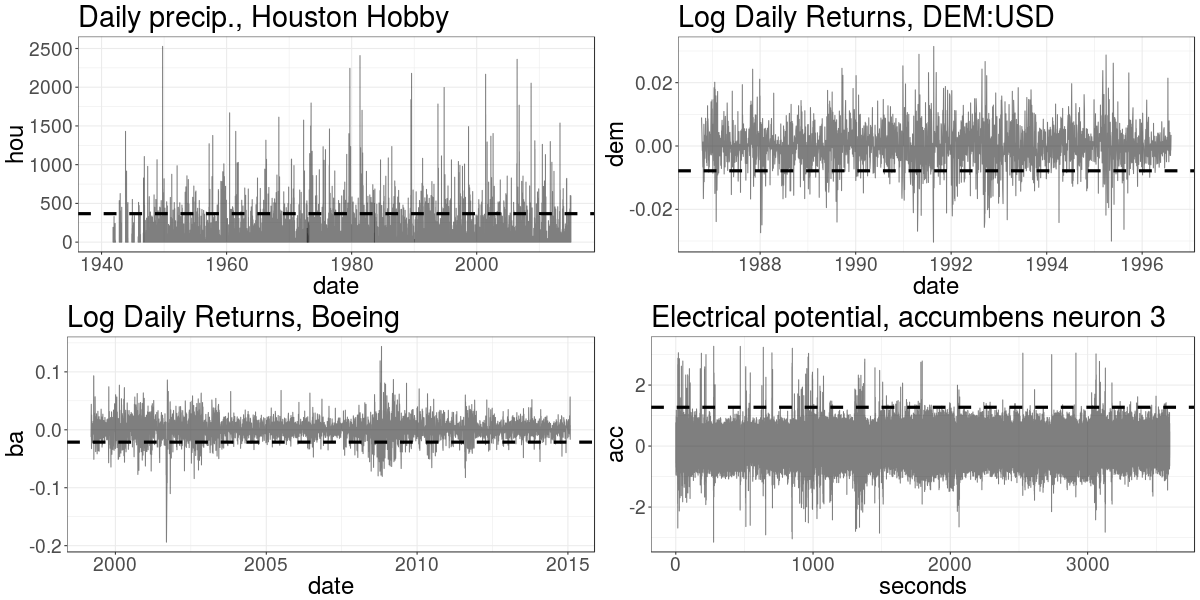} \\
\caption{Examples of raw data $y(x,t)$.  Dashed lines show $\bar y(x)$. 
The electrophysiology data are downsampled by a factor of $100$.} \label{fig:rawdat}
\end{figure}

\subsection{Precipitation} 
Results for analysis of the precipitation data are summarized in Figure
\ref{fig:weather}.  A map showing the location of each station can be found
in Figure \ref{fig:weathermap} in the Appendix. Clear geographic
structure is evident in the estimated values of $\gamdy(x_1,x_2)$.  
Overall, tail dependence is evident at nearby sites but decays with distance;
for distances greater than about 500 km the estimated values of $\gamdy$ are
all very similar. Particularly strong dependence is observed between two nearby 
cities in California: San Francisco and Sacramento. Extremely high dependence 
exists between two sites both located in Sacramento. There is noticeable 
difference between the observed dependence structure when $\KS$ is used compared 
to $\AD$. Of the $25^2-25 = 500$ estimated statistics $\wh \gamma_\d(x_1,x_2)$ 
with $\d=M$, only 21 were statistically significant. These include every combination of the 
two sites in Sacramento and the site in San Francisco in either temporal order, 
and several other sensible pairs like Baltimore and Washington, and New York and
Boston. The results are largely consistent 
with background knowledge about weather patterns in the United States and the 
relative proximity and spatial orientation of these sites. Also shown is a plot 
of $\wh \gamma_\d(x_1,x_2)$ vs $\|x_1 - x_2\|_2$ with $\d = \KS$. The expected 
pattern of decreasing dependence at increasing distance is seen.

\begin{figure}[h]
\centering
 \begin{tabular}{cc}
 \multicolumn{2}{c}{\large{$\log_{10}(1+\wh \gamma_\d(x_1,x_2))$}} \\
 $\d = \AD$ & $\d = \KS$ \\
\includegraphics[width=0.4\textwidth]{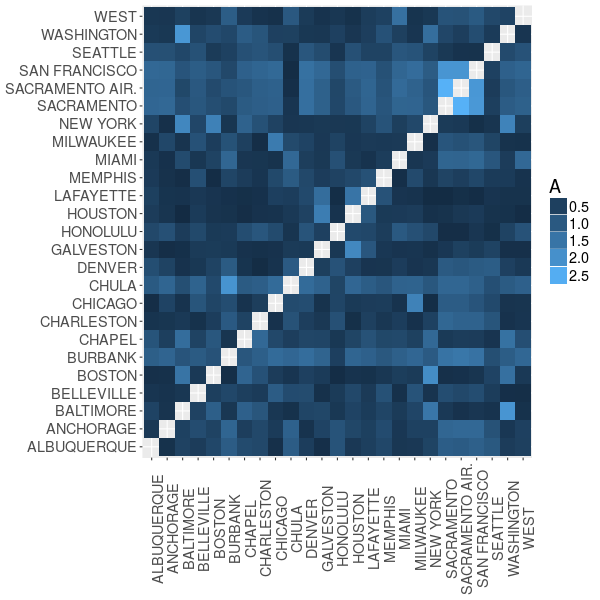} &
\includegraphics[width=0.4\textwidth]{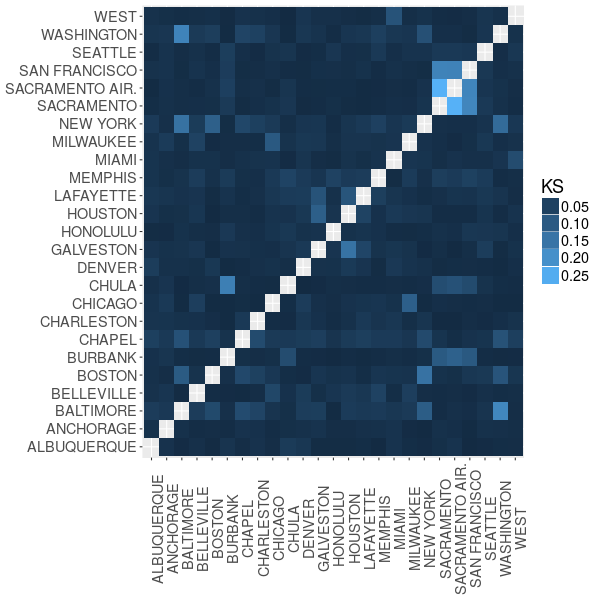} \\
$t$ statistic & $\wh \gamma_\d(x_1,x_2)$ vs $\|x_1 -x_2\|_2$ \\
\includegraphics[width=0.4\textwidth]{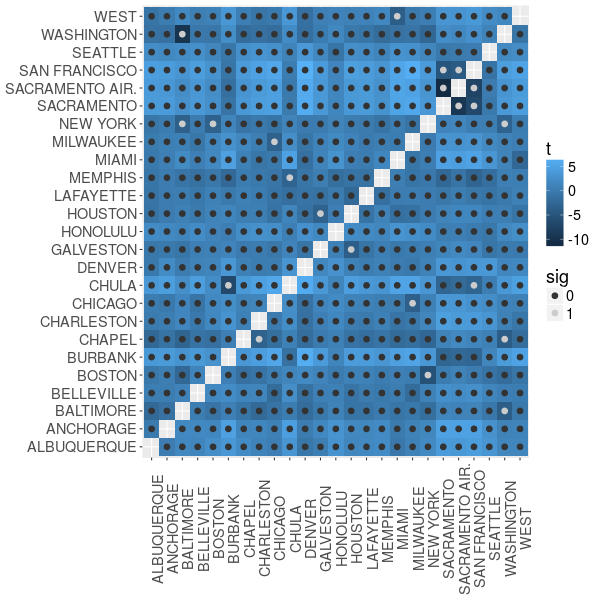} &
\includegraphics[width=0.4\textwidth]{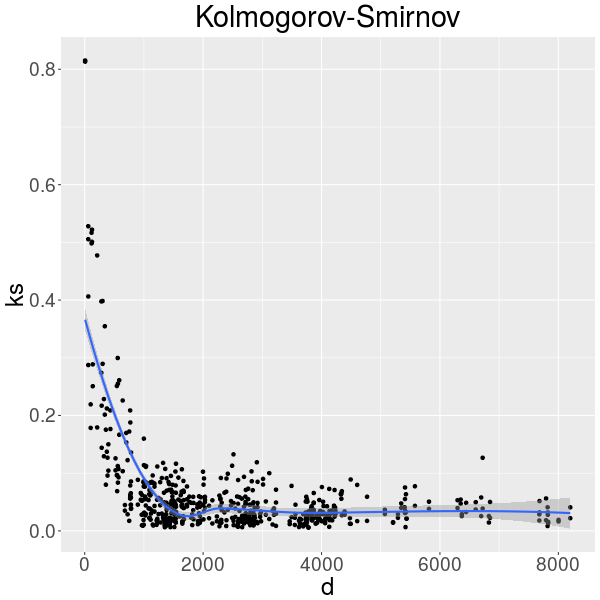} 
 \end{tabular}
\caption{Results for daily precipitation data as described in text. The values 
of $\wh \gamma_\d(x_1,x_2)$ for $\d = \KS, \AD$ in the top two panels 
are shown on the log scale for
improved contrast. Overlaid dots on $t$ statistic graphic show significance at the
0.05 level after adjusting for multiplicity (light dot: $p<0.05$).} 
\label{fig:weather}
\end{figure}

\subsection{Exchange Rates}
Exchange rate data for twelve currencies is described in \citep
{harrison1999bayesian} and \citep{prado2010time}.   
A table giving the full name of each currency and its
corresponding row/column in the color map is provided in Table
\ref{tab:currency} in the Appendix.  Figure \ref{fig:exchrate}
shows some results.  Here, there is very clear structure in the pattern of
pairwise dependence, with the European currencies (BEF, FRF, DEM, NLG, ESP,
SEK, CHF, and GBP) showing strong evidence of dependence while the other four
currencies (AUD, CAD, JPY, and NZD) show little evidence of tail dependence
among themselves or with the European currencies. Order matters as well; for example, 
SEK is highly dependent in one direction but not the other.
About one third of the 
$12^2-12 = 132$ pairs exhibit statistically significant tail dependence with 
$\d=\M$.  

\begin{figure}[h]
\centering
 \begin{tabular}{cc}
 $\log_{10}(1+\wh \gamma_\d(x_1,x_2)), \d = \AD$ & $t$ statistic \\ 
  \includegraphics[width=0.35\textwidth]{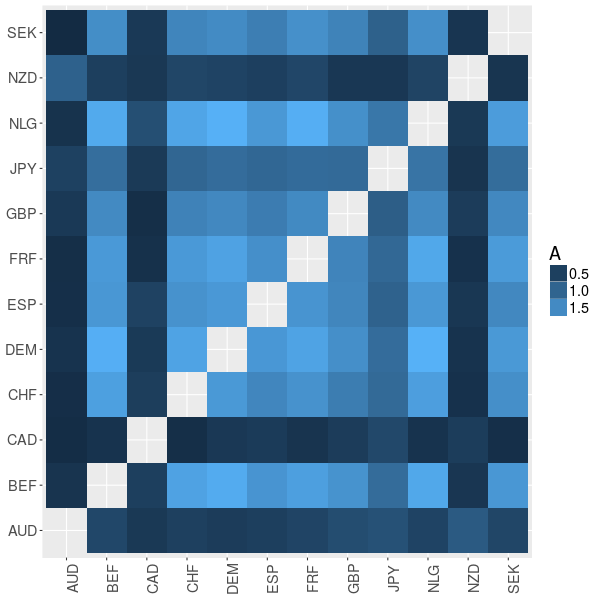} &
\includegraphics[width=0.35\textwidth]{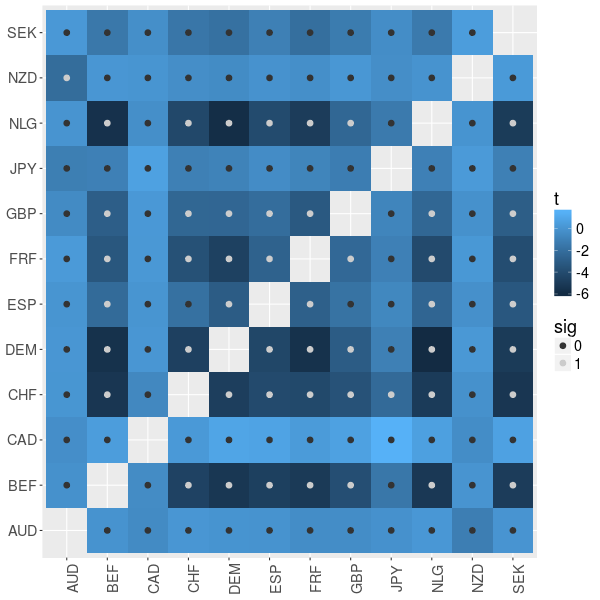} \\
 \end{tabular}
  \caption{Results for exchange rate data as described in text.} \label{fig:exchrate}
\end{figure}

\subsection{Electrophysiology}
Potential data recorded at single neurons in the brain of a mouse interacting
with a maze are described in \citep {dzirasa2010noradrenergic}.
In electrophysiology, interest lies in modeling dependence between neuron
``spikes'' at different locations in the brain.  Neuronal voltage spikes
indicate transmission of signals along axonal pathways, and large potentials
tend to cluster together in small time windows.  These events are referred to
as ``spike trains.'' Thus, in these data one expects to see extensive tail
dependence, but the waiting times between spikes at different neurons are
relevant, since they inform about the pathway that the signal takes through
the brain.

The neurons are assigned to regions of the brain, which are labeled in Figure
\ref{fig:electro_ad}.  There is ample evidence of strong
tail dependence for many pairs of neurons, but the matrix of $\wh 
\gamma_\d(x_1,x_2)$ is highly asymmetric, indicating that spikes in some 
regions tend to follow spikes in other regions. This is consistent with the 
basic understanding of how signals are propagated through the brain. For space 
reasons, we only show results for $\d=\AD$ here. 
Figure \ref{fig:electro_pval} shows a histogram of the $p$ values for testing $H_0 : 
\wh \gamma_\d(x_1,x_2) = 0$ with $\d = \M$ using the $t$ test with unequal 
variances, adjusted using the method of Benjamini and Hochberg. It is clear 
that even in this setting where dependence is very high, it is possible to 
separate more and less scientifically interesting pairs using this method. 
Performing the tests at the level of brain region instead of individual neuron 
might give more power, at the cost of spatial resolution.

\begin{figure}[h]
\centering
$\wh \gamma_\d(x_1,x_2), \d = \AD$ \\
 \includegraphics[width=.9\textwidth]{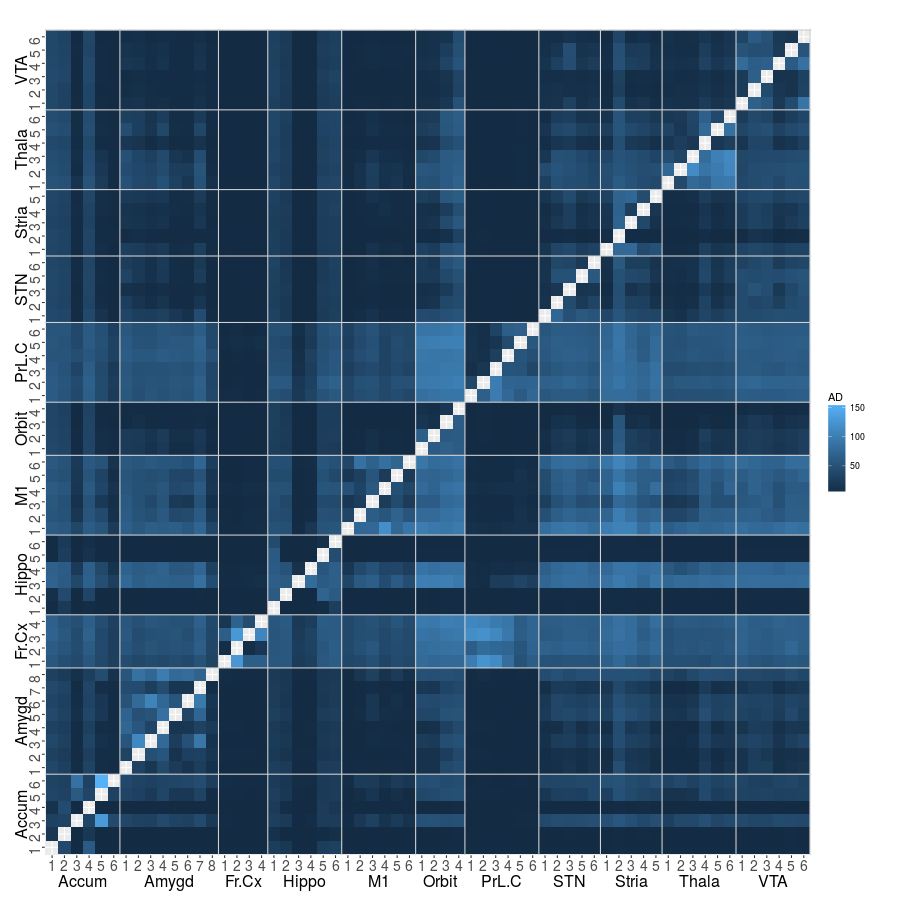} \\ 
\caption{Results for electrophysiology application as described in text. The neurons are assigned to 11 different brain regions; the axis labels show the abbreviated brain region name and an identifier for the neuron within each region. Blocks of neurons within a single brain region are delineated by horizontal and vertical white lines. } 
\label{fig:electro_ad}
\end{figure}

\begin{figure}[h]
\centering
$p$ values for $t$ tests \\
\includegraphics[width=0.4\textwidth]{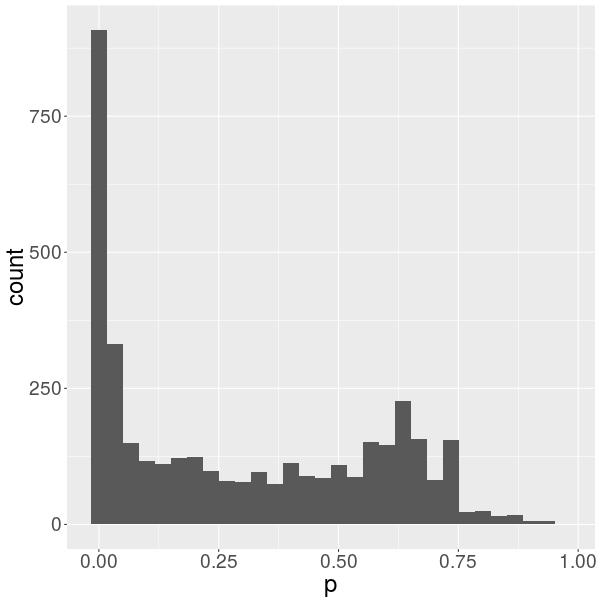}
\caption{$p$ values for $t$ tests of difference in means between $Z(x_1,x_2)$ and $V(x_2)$ for electrophysiology data.} \label{fig:electro_pval}
\end{figure}

\section{Discussion}
Characterizing tail dependence based on waiting times between peaks over
thresholds has the advantage of greater flexibility and generality than
existing alternatives in cases where temporal lags in extreme events are
possible.  The method relies strictly on the waiting times and inference on
the parameter $\gamma_\d(x_1,x_2)$, and is relatively simple and computationally 
scalable compared to fitting models of max-stable processes to data. The 
inferential method based on waiting times is robust to misspecification
of the underlying process as long as the method selected for estimation of
the waiting time distributions is sufficiently flexible. Here we have opted for 
nonparametric methods to avoid misspecification problems; where significant 
domain knowledge is available, power could be gained by using parametric 
methods to estimate $\mc L(V(x))$ and $\mc L(Z(x_1,x_2))$. 

Like other peaks-over-thresholds methods, our approach requires the choice of
appropriate thresholds.  Substantial work has been done on threshold choice
for standard peaks-over-thresholds methods.  It is unclear whether this will
translate directly to threshold choice in the waiting times context.  We have
taken the simpler approach of choosing a threshold to achieve a minimum number 
of data points.  Further work on threshold
choice is called for.  Additionally, we have thus far modeled the
pairwise waiting times entirely independently; clear gains in estimation
efficiency would result from sharing information across all pairs.  These are
worthwhile areas for future work.

\section*{Acknowledgments}
The authors thank Dan Cooley and David Dunson for helpful comments on drafts
of this manuscript, and Kristian Lum for assistance in obtaining data for the
climatological applications.  James Johndrow acknowledges funding from the
United States National Institute of Environmental Health Sciences, grant
numbers ES017436, ES020619, and ES017240, as well as the National Science 
Foundation and National Institutes of Health. Robert Wolpert acknowledges
funding from United States National Science Foundation, grant numbers
SES-1521855 and ACI-1550225.

\FloatBarrier
\begin{appendix}
\section*{Appendix}
\section{Proof of Theorems \ref{thm:msvp} and \ref{thm:msv-max}}
\subsection{Proof of Theorem \ref{thm:msvp}}
Theorem \ref{thm:msvp} is a corollary of the following result.
\begin{theorem}[Maxima of MSV Processes] \label{thm:msv-maxf}
If $Y_i\iid\msv(\beta,\delta,\PI,\varphi)$ are independent MSV processes 
for $1\le i\le n$ then their maximum
$\Ymn:=\max\iN Y_i(x,t)$ is a max-stable velocity
process with the $\msv(n\beta,\delta,\PI,\varphi)$ distribution.
\end{theorem}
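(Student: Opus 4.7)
The plan is to reduce the statement to the superposition theorem for Poisson random measures. Each constituent process $Y_i$ is constructed from its own Poisson random measure $\cN_i$ on $\Omega$ with common intensity
\[\nu(d\omega) = \beta u^{-2}\,du\,d\xi\,d\sigma\,\delta e^{-\delta\tau}\,d\tau\,\PI(da),\]
and the $Y_i$ are mutually independent because their driving measures are.

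First, I would observe that taking a pointwise maximum over the component processes commutes with taking the supremum over the atoms of the driving measures. Writing each $Y_i(x,t) = \sup_j u^{(i)}_j\, k(x,t;\omega^{(i)}_j)$, where $\{\omega^{(i)}_j\}_j$ enumerates the atoms of $\cN_i$, one has
\[\Ymn \;=\; \sup_{1 \le i \le n} \sup_j u^{(i)}_j\, k(x,t;\omega^{(i)}_j) \;=\; \sup_{\omega \in \supp \cN} u\, k(x,t;\omega),\]
where $\cN := \sum_{i=1}^n \cN_i$ denotes the superposed measure.

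Second, I would invoke the standard superposition theorem for Poisson processes: the sum of $n$ independent PRMs with common intensity $\nu$ is itself a PRM with intensity $n\nu$. Since
\[n\nu(d\omega) = (n\beta)\, u^{-2}\,du\,d\xi\,d\sigma\,\delta e^{-\delta\tau}\,d\tau\,\PI(da)\]
differs from $\nu$ only by the replacement $\beta \mapsto n\beta$, the representation of $\Ymn$ as $\sup_{\omega} u\,k(x,t;\omega)$ over the atoms of $\cN$ matches the defining recipe \eqref{eq:msv} for an $\msv(n\beta,\delta,\PI,\varphi)$ process, giving the conclusion.

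There is essentially no obstacle: the whole argument hinges on recognizing that passing from the family $\{Y_i\}$ to $\Ymn$ amounts to superposing their driving PRMs, which rescales only the leading constant in the intensity and leaves $(\delta,\PI,\varphi)$ untouched. The companion result Theorem \ref{thm:msvp} then drops out by pairing this superposition identity with the scale invariance of the marginal intensity $\beta u^{-2}\,du$ under the change of variables $u \mapsto u/n$ (whose Jacobian exactly cancels the extra factor of $n$), which supplies the factor of $1/n$ required by the max-stability definition \eqref{eq:maxstable}.
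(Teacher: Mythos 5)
Your argument is correct and is essentially the paper's own proof: both identify $\Ymn$ as the MSV process driven by the superposed measure $\cN^+=\sum\iN\Ni$, invoke the Poisson superposition theorem to get intensity $n\nu$, and note that $n\nu$ is precisely the intensity of an $\msv(n\beta,\delta,\PI,\varphi)$ process. Your closing remark on deducing Theorem \ref{thm:msvp} via the scaling $u\mapsto u/n$ (equivalently, multiplying $\beta$ and the thresholds by $n$) also matches the paper's route.
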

\begin{proof} Each of $\{Y_i\}$ has a representation
\eqref{eq:msv} for the support points $\{\wij\}_{j\in\bbN}$ of the
$i$th of $n$ independent Poisson random measures $\Ni(d\omega)\iid
\Po\big(\nu(du)\big)$ for the intensity measure $\nu(d\omega)$ of
\eqref{e:nu}.  Then their sum $\cN^+(d\omega):= \sum\iN\Ni(d\omega)
\sim\Po\big(n\nu(du)\big)$ is also a Poisson random measure, with intensity
measure $n$ times $\nu(d\omega)$ and with support equal to the union of the
supports of $\{\Ni\}$.  The MSV processes associated with $\cN^+$ is
precisely $\Ymn$, since 
\be
\P\left[\Ymn < y \right] &= \P\left[ \cap_{i=1}^n \{Y_i < y\} \right] \\
&= \prod_{i=1}^n e^{-\nu(B)} = e^{-n \nu(B) },
\ee
for $B = \{Y_i < y\}$. But the measure $n{\cdot}\nu$ of \eqref{e:nu} is exactly
that of the $\msv(n\beta,\delta,\PI,\varphi)$ process.
\end{proof}
We now prove Theorem \ref{thm:msvp}.

\begin{proof} 
Fix $Y\sim\msv(\beta,\delta,\PI,\varphi)$ and $n$
iid max-stable copies $\{Y_i\}\iN\iid\msv(\beta,\delta,\PI,\varphi)$.  By
Theorem~\ref{thm:msv-maxf} the maximum $\Ymn$ has the $\msv (n\, \beta,
\delta, \PI,\varphi)$ distribution.  By \eqref{e:mv-cdf} in
Theorem~\ref{thm:joint-cdf}, multiply $\beta$ and each $y_j$ by $n$ to see
that all finite-dimensional marginal joint distributions of $\{\Ymn\}$ are
identical to those of $\{n\, Y(x,t)\}$, so $Y(x,t)\distd\frac1n\Ymn$
satisfies \eqref{eq:maxstable} of Theorem~\ref{thm:dehaan} and $Y$ is
max-stable.
\end{proof}

\subsection{Proof of Theorem \ref{thm:msv-max}}
We first prove two lemmas used in obtaining the main result.
\begin{lemma} \label{lem:cYcbeta}
If $Y_1\sim\msv(\beta,\delta,\PI,\varphi)$ and
$Y_c\sim\msv(c\beta,\delta,\PI,\varphi)$ for some $c>0$, then $c Y_1^* \distd
Y_c^*$.
\end{lemma}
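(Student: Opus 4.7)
The plan is to relate $Y_c$ to $Y_1$ through a deterministic rescaling of the underlying Poisson random measure. Specifically, I first argue that $cY_1 \distd Y_c$ as stochastic processes on $\bbR^d\times\bbR$, and then transfer this equality through the measurable supremum functional defining $Y^*$ in \eqref{eq:msv-max}.

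For the first step, consider the map $T\colon \Omega \to \Omega$ defined by $T(u,\xi,\sigma,\tau,a) = (cu,\xi,\sigma,\tau,a)$. Under the substitution $u' = cu$ the factor $u^{-2}\,du$ transforms to $c\,(u')^{-2}\,du'$, while the remaining factors $d\xi\,d\sigma\,\delta e^{-\delta\tau}\,d\tau\,\PI(da)$ of $\nu$ in \eqref{e:nu} are untouched. Hence $T_*\nu_\beta = \nu_{c\beta}$, where $\nu_\beta$ denotes the intensity with rate $\beta$. By the mapping theorem for Poisson random measures, if $\cN \sim \Po(\nu_\beta)$ then $T\cN \sim \Po(\nu_{c\beta})$, which is a valid driving PRM for $Y_c$. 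Since the magnitudes $u_j$ enter \eqref{eq:msv} linearly, applying $T$ to the support points of $\cN$ replaces each $u_j$ by $cu_j$ and hence multiplies $Y_1(x,t)$ by $c$ at every $(x,t)$. Therefore $cY_1 \distd Y_c$ jointly in $(x,t)$.

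For the second step, multiplication by $c>0$ commutes with the supremum, so
\[
cY_1^*(x,t) \;=\; c\sup_{0\le s\le t}Y_1(x,s) \;=\; \sup_{0\le s\le t}cY_1(x,s),
\]
and analogously for $Y_c^*$. Both sides can be written as the countable maximum over particles $j$ of the per-particle supremum $\sup_{s\in[0,t]\cap[\sigma_j,\sigma_j+\tau_j)}u_j\,k(x,s;\xi_j,\sigma_j,\tau_j,a_j)$; since $k$ is continuous on the open lifetime interval (vanishing outside $[\sigma_j,\sigma_j+\tau_j)$), this per-particle supremum is a measurable function of $\omega_j$, and the countable maximum is then a measurable functional of the underlying Poisson random measure. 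Applying the same measurable functional to the PRMs underlying $cY_1$ and $Y_c$, which have identical laws by step one, yields equality in distribution of the finite-dimensional marginals of $cY_1^*$ and $Y_c^*$, and hence of the processes.

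I do not foresee any serious obstacle; the only point worth handling with care is the expression of $Y^*$ as a measurable functional of the PRM, so that the distributional equality at the level of the process $Y$ transfers cleanly to the maximal process $Y^*$.
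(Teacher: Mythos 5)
Your proof is correct, but it takes a genuinely different route from the paper. The paper's proof is computational: it fixes finitely many points $(x_i,t_i)$, writes the joint CDF of $Y_1^*$ as the void probability $\exp\big(-\nu(\cup_i A_i)\big)$ for the exceedance sets $A_i$, expands $\nu(\cup_i A_i)$ by inclusion--exclusion, and evaluates each intersection measure explicitly (after the change of variables $z=\xi+v\sigma$, as in \eqref{e:capBstar}); the lemma then follows from the observation that this expression is invariant when $\beta$ and every threshold $y_j$ are multiplied by the same constant $c$, so that $\P[\cap_i\{cY_1^*(x_i,t_i)\le y_i\}]=\P[\cap_i\{Y_c^*(x_i,t_i)\le y_i\}]$. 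You instead work at the level of the driving Poisson random measure: the dilation $u\mapsto cu$ pushes the intensity \eqref{e:nu} with rate $\beta$ to the one with rate $c\beta$ (your Jacobian computation is right), the mapping theorem then gives a coupling in which $cY_1$ is exactly the MSV process of the transformed PRM, and since scaling by $c>0$ commutes with the running supremum, $cY_1^*$ is the maximal process of that PRM; applying the same measurable functional to two PRMs with the same law gives the equality of finite-dimensional distributions of $cY_1^*$ and $Y_c^*$. You were right to route the argument through the PRM rather than through finite-dimensional distributions of $Y$ itself (which would not by themselves control an uncountable supremum), and your measurability remarks are adequate for that purpose. The trade-off: your argument is shorter, coupling-based, and transfers automatically to any measurable functional of the process (it would give Theorem~\ref{thm:msvp}-type scaling statements with no extra work), at the cost of invoking the Poisson mapping theorem and a measurability discussion; the paper's argument stays entirely within the explicit CDF machinery already developed for Theorem~\ref{thm:joint-cdf}, and the intermediate formula \eqref{e:capBstar} is reused in the companion Lemma~\ref{lem:Yc}, which is why the authors go that way.
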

\par\leftline{Proof.}\par
Fix $\{x_i\}\iN\subset\bbR^d$ and $\{t_i\}\iN\subset\bbR$. 
The joint CDF for $\{Y_1^*(x_i,t_i)\}$ at $\{y_i\}\subset\bbR_+$ is
\begin{align}
\P\big[\cap\iN[Y_1^*(x_i,t_i)\le y_i]\big]
   &= \exp\big(-\nu(\cup\iN A_i)\big)\label{e:joint-st}
\end{align}
for the exceedance events $A_i := [Y_1^*(x_i,t_i)> y_i]$, which may be written
\begin{align}
A_i &= \set{\omega:~ u >
   \frac{y_i} 
   {\sup\limits_{(0\vee\sigma)\le s\le (t_i\wedge\sigma+\tau)}
   \set{\vpL\big(x_i-\xi -v(s-\sigma)\big)}}}. \notag
\end{align}
By the inclusion-exclusion principle, $\nu\big(\cup A_i\big)$ can be
evaluated as
\begin{align}
\nu\big(\cup\iN A_i\big) &= \sum_{\emptyset\ne J\subset\{1,\cdots,n\}}
  (-1)^{1+|J|} \nu\big(\cap\jJ A_j\big),\label{e:iep}
\end{align}
where $|J|$ denotes the cardinality of $J$.
For a finite set $J\subset\{1,\cdots,n\}$ of indices, the intersection
\begin{align}
  \bigcap\jJ A_j 
   &= \set{\omega:~ u > \max\jJ \bet{ \frac{y_j} 
      {\sup\limits_{(0\vee\sigma)\le s\le (t_j\wedge\sigma+\tau)}
       \set{\vpL\big(x_j-\xi -v(s-\sigma)\big)}} }} \notag
\intertext{has $\nu$-measure (after changing variables from $\xi$ to
  $z:=\xi+v\sigma$)}
  \nu\cet{\bigcap\jJ A_j}
   &= \int \min\jJ  \bet{\frac
      {\sup\limits_{(0\vee\sigma)\le s\le (t_j\wedge\sigma+\tau)}
       \set{\vpL\big(x_j -vs -z\big)}}      
      {y_j}}
       dz\,\beta d\sigma\, \delta e^{-\delta\tau}d\tau\,
       \PI(dv\,d\Lambda)\label{e:capBstar}
\end{align}
The measure $\nu\big(\cap\jJ A_j\big)$ in \eqref{e:capBstar} is
unchanged if both $\beta$ and each $y_j$ are multiplied by the same constant
$c>0$.  By \eqref{e:iep}, the $\nu\big(\cup\iN A_i\big)$ is also unchanged.  

Finally, by \eqref {e:joint-st} and \eqref{e:capBstar},
\begin{align*}
\P\big[\cap\iN[c\,Y_1^*(x_i,t_i)\le y_i]\big] 
  &= \P\big[\bigcap\iN[Y_1^*(x_i,t_i)\le y_i/c]\big]\\
  &= \P\big[\bigcap\iN[Y_c^*(x_i,t_i)\le y_i]\big],
\end{align*}
proving the lemma.\qed

\begin{lemma}\label{lem:Yc}
If $\{Y_i\}\iN\iid\msv(\beta,\delta,\PI,\varphi)$ are independent MSV processes 
with maximal processes
\[Y^*_i(x,t):=\sup_{0\le s\le t} Y_i(x,s),\qquad t>0
\]
then the maximum $\Ystmn$ is the maximal process $Y^*(x,t)$ for a
max-stable velocity process $Y\sim \msv(n\beta,\delta,\PI,\varphi)$.
\end{lemma}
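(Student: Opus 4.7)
}
The plan is to reduce the statement to Theorem \ref{thm:msv-maxf} by swapping the order of the two suprema that define the left-hand side. Concretely, for each fixed $(x,t)$ with $t\ge 0$ I would write
\begin{align*}
 \Ystmn &= \bigvee_{i=1}^n \sup_{0\le s\le t} Y_i(x,s)
        = \sup_{0\le s\le t} \bigvee_{i=1}^n Y_i(x,s),
\end{align*}
which is just the elementary identity $\sup_i \sup_s a_{i,s} = \sup_s \sup_i a_{i,s}$ applied pathwise. Call the inner pointwise maximum $\widetilde Y(x,s):=\bigvee_{i=1}^n Y_i(x,s)$, so that $\Ystmn = \sup_{0\le s\le t}\widetilde Y(x,s)$.

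The next step is to invoke Theorem \ref{thm:msv-maxf}, which gives $\widetilde Y \sim \msv(n\beta,\delta,\PI,\varphi)$ \emph{jointly} in $(x,s)$. I would emphasize that the theorem produces equality in distribution of the two processes as random fields on $\bbR^d\times\bbR$, not merely of finite-dimensional marginals; this is clear from its proof, since the Poisson random measure $\cN^+ = \sum_i \Ni$ is itself an $\msv(n\beta,\delta,\PI,\varphi)$ driving measure and the construction \eqref{eq:msv-both} is a deterministic functional of the driving Poisson measure.

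Because the map $Z(\cdot,\cdot)\mapsto \sup_{0\le s\le t}Z(\cdot,s)$ is a measurable functional of the process (and indeed acts pathwise), applying it to two processes with the same distribution yields outputs with the same distribution. Thus
\begin{align*}
 \Ystmn = \sup_{0\le s\le t}\widetilde Y(x,s) \;\distd\; \sup_{0\le s\le t} Y(x,s) = Y^*(x,t),
\end{align*}
where $Y\sim\msv(n\beta,\delta,\PI,\varphi)$, and the equality in distribution holds jointly in $(x,t)$, which is exactly the claim.

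The only mildly delicate point, which I expect to be the main obstacle, is justifying that the joint (infinite-dimensional) equality in distribution of $\widetilde Y$ and $Y$ passes through the sup-over-$s$ functional. I would handle this by pointing to the Poisson-representation argument underlying Theorem \ref{thm:msv-maxf}: both $\widetilde Y$ and $Y$ can be realized as the \emph{same} deterministic function \eqref{eq:msv} of a Poisson random measure with intensity $n\nu(d\omega)$, so after passing through any common deterministic functional (here $\sup_{0\le s\le t}$) the resulting random fields are equal in distribution as well. No separability issue arises because $\widetilde Y(x,\cdot)$ is upper-semicontinuous in $s$ by construction (being a finite max of translated kernel functions multiplied by indicators of time intervals), so the supremum over $s\in[0,t]$ coincides almost surely with the supremum over any countable dense subset and is therefore a well-defined measurable functional of the underlying Poisson measure.
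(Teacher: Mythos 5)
Your proposal is correct and takes essentially the same route as the paper: exchange the two suprema, identify $\bigvee_{i=1}^n Y_i$ as the MSV process driven by the summed Poisson measure $\cN^+=\sum\iN\Ni\sim\Po\big(n\nu(d\omega)\big)$, invoke Theorem \ref{thm:msv-maxf} to get the $\msv(n\beta,\delta,\PI,\varphi)$ law, and read off the maximal process. The paper simply states the final identification pathwise---the maximal process of the process constructed from $\cN^+$ \emph{is} $\Ystmn$---which is precisely the content of your Poisson-representation remark, so the distributional-functional and separability discussion, while sound, is not needed.
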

\par\leftline{Proof.}\par Each of $\{Y_i\}$ has a representation
\eqref{eq:msv} for the support points $\{\wij\}_{j\in\bbN}$ of the $i$th of
$n$ independent Poisson random measures $\Ni(d\omega)\iid
\Po\big(\nu(du)\big)$ for the intensity measure $\nu(d\omega)$ of
\eqref{e:nu}.  The sum $\cN^+(d\omega):= \sum\iN\Ni(d\omega)
\sim\Po\big(n\nu(du)\big)$ is also a Poisson random measure, with intensity
measure $n\,\nu(d\omega)$ and with support equal to the union $\{\wij\}$ of
the supports of $\{\Ni\}$, and the max-stable velocity process $Y(x,t)$
associated with $\cN^+(d\omega)$ by \eqref{eq:msv} is $\Ymn$.  By
Theorem~\ref{thm:msv-maxf}, $Y\sim\msv(n\beta,\delta,\pi,\varphi)$.
But $\Ystmn = Y^*(x,t)$ and the Lemma follows.\qed

We now prove Theorem \ref{thm:msv-max}.
\par\leftline{Proof.}\par
Let $\{Y_i\}\iN\iid\msv(\beta,\delta,\PI,\varphi)$ and
$Y\sim\msv(\beta,\delta,\PI,\varphi)$ be independent MSV processes 
with maximal processes
\begin{align*}
 Y^*_i(x,t)&:=\sup_{0\le s\le t} Y_i(x,s)&
   Y^*(x,t)&:=\sup_{0\le s\le t} Y (x,s),\qquad t>0
\end{align*}
By Lemma~\ref{lem:Yc}, there exists a process $Y_0\sim\msv(n\, \beta,\delta,
\PI,\varphi)$ whose maximal process $Y^*_0$ is the maximum $\Ystmn$.
By Lemma~\ref{lem:cYcbeta} with $c=n$, $Y^*_0\distd n\,Y^*$, i.e.,
$Y^*(x,t) \distd \frac1n\Ystmn$.  By \eqref {eq:maxstable} in
Theorem~\ref{thm:dehaan}, $Y^*(x,t)$ is max-stable.\qed

\section{Proof of Theorem \ref{thm:marginals} and Corollaries
  \ref{cor:gausswaittime} and \ref{cor:maxmsv}}
\subsection{Marginal Distribution of $Y(x,t)$}\label{s:Y.t-marg}
Fix $x\in\bbR^d$, $t\in\bbR$, and $y>0$.  Then the event
\[ [Y(x,t) \le y] = \big\{\cN(A)=0\}
\]
that $Y(x,t)$ does not exceed $y$ is identical to the event that
the Poisson random measure $\cN(d\omega)$ assigns zero points to the set
\[
A = \big\{\omega:~\sigma < t < \sigma+\tau,~
     u \vpL\big(x-\xi-(t-\sigma)v\big)> y \big\}
\]
of particles born before time $t$, surviving until time $t$, that move from
their birth point $\xi$ at velocity $v$ to a location
$\xi_t:=\xi+(t-\sigma)v$ sufficiently close to $x$ that their intensity $u$
will lead to exceedance of level $y$.  We will use $\nu(\cdot)$ to denote the
measure of a set with respect to the intensity measure of $\cN$.  The
probability of the event $\P(\cN(A)=0)$ for the Poisson measure
$\cN\sim\Po(\nu(d\omega))$ is
\begin{align*}
\P[ Y(x,t) \le y ] &= \exp\big(-\nu(A)\big)\\
  &= \exp\Big(-\int_{\bbR^d\times\cP^d\times\bbR^d}
               \int_{\sigma<t<\sigma+\tau}
               \int_{u>y/\vpL(x-\xi_t)}
               u^{-2}du\,\beta\delta e^{-\delta\tau}\,d\tau\,d\sigma
               \,d\xi\PI(dv\,d\Lambda)\Big)\\
  &= \exp\Big(-\int_{\bbR^d\times\cP^d\times\bbR^d}
               \int_{\sigma<t<\sigma+\tau}
               \frac1y\vpL(x-\xi_t)
               \,\beta\delta e^{-\delta\tau}\,d\tau\,d\sigma
               \,d\xi\PI(dv\,d\Lambda)\Big)\\
  &= \exp\Big(-\int_{\bbR^d\times\cP^d\times\bbR^d}
               \frac{\beta}{\delta y}\vpL\big(x-\xi-(t-\sigma)v\big)
               \,d\xi\PI(dv\,d\Lambda)\Big)\\
  &= \exp\Big(-\frac\beta{\delta y}\Big),
\end{align*}
so $Y(x,t)\sim \Fr(1,\beta/\delta)$ has the unit Fr\'echet distribution
with scale $\beta/\delta$ for all locations $x\in\bbR^d$ and times $t>0$,
for any probability distribution $\PI(dv\,d\Lambda)$.

\subsection{Marginal Distribution of $\kappa$ and $Y^*(x,t)$}\label{s:k.x-marg}
Now suppose that $\varphi(z)$ is a monotonically decreasing
function of squared Euclidean length, and denote by
$\xi_s:=\xi + v(s-\sigma)$ the location at time $s\in\bbR$ of the particle
$\omega = (u,\xi,\sigma,\tau,\Lambda,v)$, born at location $\xi$ at time
$\sigma$.  The event that the first exceedance time $\kappa(y)$ of level
$y>0$ is later than any specified $t>0$ is the event that the Poisson
random measure $\cN(d\omega)$ assigns zero points to the Borel set
\begin{align}
A &:= \Big\{\omega:~ \sup_{s\in[0,t]\cap[\sigma,\sigma+\tau]}
      u \vpL(x-\xi_s)>y\Big\},\label{e:A}
\end{align}
whose probability is $\P[\kappa(y) >t]=\exp(-\nu(A))$.  It is convenient
for us to write $A$ as the disjoint union of several simpler pieces, and
then sum their measures to find $\nu(A)$.  For fixed $\Lambda\in\cP^d$ and
$v\in\bbR^d$, denote by $\Delta$ the time interval (positive or negative)
between birth and arrival at the closest point of approach to $x$ (CPA),
starting from $\xi$ and traveling at velocity $v$, \ie,
\begin{align*}
   \Delta &:= \argmax_{s\in\bbR} \big\{\vpL(x-\xi-s v)\big\}\\
         &= \argmin_{s\in\bbR} \big\{(x-\xi-s v)'\Lambda(x-\xi-s v)\big\}\\
         &= {v'\Lambda(x-\xi)}/{v'\Lambda v}.
\end{align*}
Any vector $z\in\bbR^d$ can be written uniquely as the sum
$z=(z^\|+z^\perp)$ of its projections onto the vector space spanned by the
velocity vector $v$, and its orthogonal complement, in the inner product
$\Lambda$, given by:
\begin{align}
   z^\|   &:= (v'\Lambda z/v'\Lambda v)v&
   z^\perp &:= z-(v'\Lambda z/v'\Lambda v)v \label{e:perp+para}
\end{align}
The projection of $(x-\xi)$ onto the span of $v$ is $(x-\xi)^\|=\Delta v$,
and $\xi+\Delta v$ is the CPA to $x$.

The particle is initially \emph{approaching} $x$ if $v'\Lambda(x-\xi)>0$,
\ie, $\Delta>0$; otherwise it is initially \emph{receding} from $x$.  The
supremum in \eqref{e:A} will be attained at time $s^*=\sigma+\Delta$, if
that is in the interval $[0,t]\cap[\sigma, \sigma+\tau]$.  If not, the
supremum will be attained at one of the endpoints of that interval.

We write $A$ as the disjoint union of five sets, one for each possible time
\[s^*=\argmax\big\{Y(x,s):~s\in[0,t]\cap[\sigma,\sigma+\tau]\big\}\]
at which $Y(x,s)$ attains its maximum in $[0,t]$: the beginning $s=0$ or
end $s=t$ of the interval, the time of the particle's birth $s=\sigma$ or
death $s=\sigma+\tau$, or some intermediate point.
\begin{itemize}
\item[$A_1$:] $s^*\in(0,t)\cap(\sigma,\sigma+\tau)$.  The particle
  initially approaches $x$ (\ie, $\Delta>0$) and reaches CPA before its
  death or time $t$, whichever occurs first.  
  The supremum $\sup_s\big[\vpL(x-\xi_s)\big]$ occurs at CPA time
  $s^*=\sigma+\Delta$, at which time $\xst=\xi+\Delta v$ so
\[ (x-\xst)=x-\xi-\Delta v = (x-\xi)^\perp,
\]
the projection of $(x-\xi)$ onto the orthogonal complement 
of $v$.  It follows that 
\begin{align}
   A_1&=\set{\omega:~ u\vpL(x-\xst)>y,\qquad
     (0\vee\sigma) \le (\sigma+\Delta) \le \big(t\wedge(\sigma+\tau)\big)}
     \label{e:a1}
\intertext{After integrating wrt $u^{-2}du$,}
\nu(A_1)&=\frac1y\int_
     {\bbR^d \times \cP^d\times \bbR^d\times
      (-\Delta\le\sigma\le t-\Delta)\times
      (\Delta\le \tau<\infty)} \bone{\Delta>0}
     \beta\delta e^{-\delta\tau} \vpL(x-\xst)\,\
     d\tau d\sigma d\xi \PI(dv\,d\Lambda)\notag
\intertext{Integrating wrt $\tau$, then $\sigma$,}
     &=\frac{\beta}{y}~t~\int_
     {\bbR^d \times \cP^d\times \bbR^d} \bone{\Delta>0}
     e^{-\delta\Delta} \vpL\big((x-\xi)^\perp\big)\,\
     d\xi \PI(dv\,d\Lambda)\notag
\intertext{Changing variables from $\xi$ to $(\Delta,\zeta)$ with
     $\Delta=v'\Lambda(x-\xi)/v'\Lambda v$ (so $\Delta v=(x-\xi)^\|$) and
       $\zeta=(x-\xi)^\perp\in v^\perp\equiv\bbR^{d-1}$, with Jacobian
       $|v|=\sqrt{v'v}$, and integrating wrt $\Delta$,}
     &=\frac{\beta}{\delta y}~t~\int_
     {\bbR^d \times \cP^d\times \{\zeta\in v^\perp\}}
     \vpL(\zeta)\,d\zeta \,|v|\,\PI(dv\,d\Lambda).\label{e:nu-a1}
\end{align}

\need1
\item[$A_2$:] $s^*=t$.  The particle approaches $x$ and survives until time
  $t$ but but fails to reach CPA by $t$.  The supremum
  $\sup_s\big[\vpL(x-\xi_s)\big]$ occurs at time $s^*=t$, at which time
  $\xst=\xi+(t-\sigma) v$:
\begin{align}
   A_2&=\set{\omega:~ u\vpL\big(x-\xi-(t-\sigma)v\big)>y,\qquad
   (0\vee\sigma) \le t < \sigma + (\tau\wedge\Delta)} \label{e:a2}
\intertext{After integrating wrt $u^{-2}du$, and noting that
$(x-\xst)=(x-\xi)^\perp+(\Delta-(t-\sigma))v$ so the $d\xi$ integral
extends over the half-space on which $v'\Lambda(x-\xst)>0$,}
\nu(A_2)&=\frac1y\int_
     {\bbR^d \times \cP^d \times
      (-\infty, t] \times       
      (t-\sigma, \infty) \times 
       \bbR^d}                  
      \bone{\Delta>t-\sigma}
 \beta\delta e^{-\delta\tau} \vpL(x-\xst)\,\
     d\xi\, d\tau\, d\sigma\, \PI(dv\,d\Lambda)\notag\\
&=\frac1{2y}\int_
     {\bbR^d \times \cP^d \times
      (-\infty, t] \times       
      (t-\sigma, \infty)}       
      \beta\delta e^{-\delta\tau}\,
     d\tau\, d\sigma\, \PI(dv\,d\Lambda)\notag\\
&=\frac1{2y}\int_
    {(-\infty, t] \times        
      (t-\sigma, \infty)}       
     \beta\delta e^{-\delta\tau}\,
     d\tau\, d\sigma\notag\\
&=\frac\beta{2\delta y}.\label{e:nu-a2}
\end{align}

\need1
\item[$A_3$:] $s^*=\sigma+\tau$.  The particle lives to time $0$ and
  approaches $x$ but dies before time $t$ without reaching the CPA.
  The supremum $\sup_s\big[\vpL(x-\xi_s)\big]$ occurs at death time
  $s^*=\sigma+\tau$, at which time $\xst=\xi+\tau v$, so
\begin{align}
   A_3&=\set{\omega:~ u\vpL\big(x-\xi-\tau v\big)>y,\qquad
 (0\vee\sigma) \le \sigma + \tau \le t\wedge(\sigma+\Delta)} \label{e:a3}
\intertext{After integrating wrt $u^{-2}du$, and noting that
$(x-\xst)=(x-\xi)^\perp+(\Delta-\tau)v$ so the $d\xi$ integral
extends over the half-space on which $v'\Lambda(x-\xst)>0$,}
\nu(A_3)&=\frac1y\int_
     {\bbR^d \times \cP^d \times
     (0,\infty) \times        
     (-\tau,t-\tau) \times    
     \bbR^d}
     \bone{\Delta>\tau}
 \beta\delta e^{-\delta\tau} \vpL(x-\xi-\tau v)\,\
     d\xi\, d\sigma\, d\tau\, \PI(dv\,d\Lambda)\notag\\
    &=\frac1{2y}\int_
      {\bbR^d \times \cP^d \times
     (0,\infty) \times        
     (-\tau,t-\tau)}          
      \beta\delta e^{-\delta\tau}
       \,d\sigma\, d\tau\, \PI(dv\,d\Lambda)\notag\\
    &=\frac1{2y}\int_
    {(0,\infty) \times        
     (-\tau,t-\tau)}          
      \beta\delta e^{-\delta\tau}
       \,d\sigma\, d\tau\notag\\
    &=\frac\beta{2\delta y}~\delta t.\label{e:nu-a3}
\end{align}

\need1
\item[$A_4$:] $s^*=0$.  The particle is born before time zero, and by time
  zero is still alive and is receding from $x$ (either because it receded
  initially, or because it passed the CPA before time zero).  Because this 
  system is invariant under time-reversal, this set has the same $\nu$ 
  measure as the set $A_2$, giving $\nu(A_4) = \beta (2 \delta y)^{-1}$.


\need1
\item[$A_5$:] $s^*=\sigma$.  The particle was born during the interval
  $[0,t]$ and recedes from $x$.  Again appealing to time-reversibility,
  $\nu(A_5) = \nu(A_3) = \beta (2 \delta y)^{-1}$.
  

\end{itemize}
\subsection{Summary}\label{ss:kap-marg}
The $\nu$ measures of the five pieces are:
\begin{align}
\nu(A_1) 
  &= \frac{\beta}{\delta y}~t~\int_
     {\bbR^d \times \cP^d\times \{\zeta\in v^\perp\}}
     \vpL(\zeta)\,d\zeta \,|v|\,\PI(dv\,d\Lambda)\notag\\
\nu(A_2)  &= \frac\beta{2\delta y}\qquad
\nu(A_3)   = \frac\beta{2\delta y}~\delta t\qquad
\nu(A_4)   = \frac\beta{2\delta y}\qquad
\nu(A_5)   = \frac\beta{2\delta y}~\delta t\notag
\intertext{whose sum is}
\nu(A) &= \frac{\beta}{\delta y}\set{1+\delta t + t
     \int_{\bbR^d \times \cP^d\times \{\zeta\in v^\perp\}}
      \vpL(\zeta)\,d\zeta \,|v|\,\PI(dv\,d\Lambda)}.
      \label{e:eta.A}
\end{align}
Thus, the probability distribution of the first time $\kappa(y)$ that
$Y(x,t)$ exceeds any level $y$ is of the form
\begin{align}
  \P[\kappa(y) >t]&=\exp\big(-\nu(A)\big)
                   =\exp\Big(-\frac{\beta}{\delta y}-t\cdot \text{const}\Big),
  \label{e:kap-surv-x} 
\end{align}
a mixture of a point-mass at zero of magnitude $1-\exp(\beta/\delta y)$ and
an exponentially-distributed random variable whose rate constant
\[ \frac\beta{\delta y}\set{\delta +
     \int_{\bbR^d \times \cP^d\times v^\perp}
     \vpL(\zeta)\,d\zeta \,|v|\,\PI(dv\,d\Lambda)}
\]
depends on the mean particle speed and kernel shape. Corollary \ref{cor:maxmsv}
follows immediately since $[Y^*(x,t) < y]$ and $[\kappa(y) > t]$ are identical
events.

\subsection{Special Case: Gaussian Kernel}\label{ss:Gauss}
Consider the Gaussian case of
$\varphi(z)=(2\pi)^{-d/2}\exp(-z'z/2)$, and fix any
$\Lambda\in\cP^d$ and $v\in\bbR^d$.  The projection of $\xi\in\bbR^d$ onto
the span of $v$ (see \eqref{e:perp+para}) is
$\xi^\| = (v'\Lambda \xi/v'\Lambda v)v = \Delta v$, where
$\Delta = (v'\Lambda \xi/v'\Lambda v)$, with squared $\Lambda$-length
$(\xi^\|)'\Lambda(\xi^\|) = \Delta^2 v'\Lambda v $.  Using \eqref{e:vpL-vperp}
\begin{align}
   \int_{v^\perp}\vpL(\zeta)\,d\zeta \,|v|
  &= \cet{v'\Lambda v/2\pi}^{\half},\notag
\intertext{and $\nu(A)$ from \eqref{e:eta.A} is}
\nu(A) &= \frac{\beta}{\delta y}\set{1+\delta t + t
     \int_{\bbR^d \times \cP^d}\cet{v'\Lambda v/2\pi}^{\half}
      \,\PI(dv\,d\Lambda)}.\label{e:eta-A-Gauss}
\end{align}

\section{Proof of Corollary \ref{cor:BerryEsseen}}
It follows from Theorem \ref{thm:marginals} that $\E[|V(x_i)|^3] = c_i < \infty$ for all
$x_i \in \mc X$. Let 
\be
V^*(x_i) = \frac{J^{-1} \sum_{j=1}^J V_j(x_i) - \E[V(x_i)]}{\sqrt{n^{-1} \var(V(x_i))}},
\ee
then for an absolute constant $C$ we have
\be \label{eq:BerryEsseenV}
\sup_t |\P[V^*(x_i) < t] - \Phi(t)| \le \frac1{\sqrt{n}} C c_j,
\ee
where $\Phi(\cdot)$ is the standard normal distribution function. If \eqref{eq:UniformRate} holds, then $\sup_j c_j = c < \infty$, and we can replace $c_j$ by $c$ in \eqref{eq:BerryEsseenV} for all $i$. The same result holds in the Wasserstein 1 metric with respect to the Euclidean distance with a different constant $C$. The proof for $Z$ is essentially identical.

\section{Proof of Theorem \ref{thm:joint-cdf}}
Applying the inclusion-exclusion principle and change-of-variables from $\xi$ to
$z:=(\xi-v\sigma)$ to \eqref{e:mv-cdf-a} gives the result.

\section{Proof of Corollary \ref{cor:gauss-joint-dist}}
By Theorem \ref{thm:joint-cdf}, the joint CDF of $\Yt1$ and $\Yt2$ can be
found as 
\[ \P[\Yt1\le y_1,~\Yt2\le y_2] = \exp\big(-\nu(B_1\cup B_2)\big) \]
for the sets
\[ B_i := \big\{\omega:~\sigma < t_i < \sigma+\tau,~
     u \vpL\big(x_i-\xi-(t_i-\sigma)v\big)> y_i \big\}
\]
of particles leading to an exceedance of $y_i$ at $(x_i,t_i)$ for $i=1,2$. 
In Theorem \ref{thm:marginals}, we found $\nu(B_i)=\beta/\delta y_i$.  To find
the required $\nu\big(B_1\cup B_2)\big)=\nu(B_1)+\nu(B_2)-\nu(B_1\cap B_2)$ we
need the measure of the intersection. Putting $\xi_{t_i} = \xi+(t_i-\sigma)v$,
\begin{align}
\nu\big(B_1\cap B_2)\big) 
  &= \int_{\cP^d\times\bbR^d\times\bbR^d}
     \int_{\sigma\le(t_1\wedge t_2)}\notag\\ &\hspace*{15mm}
     \int_{\tau\ge(t_1\vee t_2)-\sigma} 
     \int_{u\ge \big(\frac{y_1}{\vpL(x_1-\xi_{t_1})}\vee\frac{y_2}{\vpL(x_2-\xi_{t_2})}\big)}
      u^{-2}du\delta e^{-\delta\tau}d\tau\,\beta d\sigma\, d\xi\,
      \PI(dv\,d\Lambda)\notag\\
  &= \int_{\cP^d\times\bbR^d\times\bbR^d}
     \int_{\sigma\le(t_1\wedge t_2)}\notag\\ &\hspace*{15mm}
     \int_{\tau\ge(t_1\vee t_2)-\sigma}
        \left(\frac{\vpL(x_1-\xi_{t_1})}{y_1} \wedge
        \frac{\vpL(x_2-\xi_{t_2})}{y_2} \right)
        \delta e^{-\delta\tau}d\tau\,\beta d\sigma\, d\xi\,
        \PI(dv\,d\Lambda)\notag\\
  &= \int_{\cP^d\times\bbR^d\times\bbR^d}
     \int_{\sigma\le(t_1\wedge t_2)}\notag\\ &\hspace*{15mm}
        \left(\frac{\vpL(x_1-\xi_{t_1})}{y_1} \wedge
        \frac{\vpL(x_2-\xi_{t_2})}{y_2} \right)
         e^{-\delta((t_1\vee t_2)-\sigma)}\,\beta d\sigma\, d\xi\,
        \PI(dv\,d\Lambda)\notag
\intertext{Set} 
\Delta_v&:=(x_2-x_1)-(t_2-t_1)v \label{eq:delta}
\intertext{(which doesn't depend on $\sigma$) and change variables from $\xi$ 
  to $z:=\xi-\frac{x_1+x_2}2 + v[\frac{t_1+t_2}2-\sigma]$:}
  &= \int_{\cP^d\times\bbR^d\times\bbR^d}
     \int_{\sigma\le(t_1\wedge t_2)}
        \left(\frac{\vpL(z-\Delta_v/2)}{y_1} \wedge
        \frac{\vpL(z+\Delta_v/2)}{y_2} \right)\notag\\ &\hspace*{75mm}
         e^{-\delta((t_1\vee t_2)-\sigma)}\,\beta d\sigma\, dz\,
        \PI(dv\,d\Lambda)\notag\\
  &= \frac\beta\delta e^{-\delta|t_2-t_1|}
       \int_{\cP^d\times\bbR^d\times\bbR^d}
           \left(\frac{\vpL(z-\Delta_v/2)}{y_1} \wedge
           \frac{\vpL(z+\Delta_v/2)}{y_2} \right)
           \, dz\, \PI(dv\,d\Lambda), \label{e:A1A1-general}
\intertext{so then}        
 \nu(A_1 \cup A_2) &=  \frac{\beta}{\delta}
     \left[ \frac{1}{y_1} + \frac{1}{y_2} \right] \notag \\
 &- \frac\beta\delta e^{-\delta|t_2-t_1|}
       \int_{\cP^d\times\bbR^d\times\bbR^d}
     \left(\frac{\vpL(z-\Delta_v/2)}{y_1} \wedge
        \frac{\vpL(z+\Delta_v/2)}{y_2} \right)
        \, dz\, \PI(dv\,d\Lambda) \notag
\intertext{In the Gaussian case with $\varphi(z)=(2\pi)^{-d/2}\exp(-z'z/2)$
  this simplifies.  Set $B^*_1:=B_1\cap B_2\cap \{\omega: \Delta'\Lambda z
  <\log\frac{y_1}{y_2}\}$, the set of $\omega\in B_1\cap B_2$ where the
  minimum in \eqref {e:A1A1-general} is attained at $(x_1,t_1)$, and set
  $B^*_2=(B_1\cap B_2)\backslash B^*_1$, the set on which it is attained at
  $(x_2,t_2)$.  Then}
\nu(B^*_1) &= \frac\beta{\delta y_1} e^{-\delta|t_2-t_1|}
       \int_{\cP^d\times\bbR^d\times\bbR^d} \vpL(z-\Delta_v/2)
       \bone{\Delta'\Lambda z < \log\frac{y_1}{y_2}}
        \, dz\, \PI(dv\,d\Lambda)\notag
\end{align}
For fixed $v$ let $\zeta:=z-(\Delta_v'\Lambda
z/\Delta_v'\Lambda\Delta_v)\Delta_v$ be the orthogonal projection onto the
space $\Delta_v^\perp$ perpendicular to $\Delta_v$ in the $\Lambda$ norm, and
change variables from $z$ to $(\zeta, s)$ with $z=\zeta+s\Delta_v$ and
Jacobian $dz = (\Delta_v'\Delta_v)^\half\,d\zeta\,ds$, to find
\begin{align}
\nu(B^*_1) 
    &= \frac\beta{\delta y_1} e^{-\delta|t_2-t_1|}
       \int_{\cP^d\times\bbR^d\times\Delta_v^\perp\times \bbR}
       |\Lambda/2\pi|^\half \exp\big(-\zeta'\Lambda\zeta/2\big)
       e^{-s^2 \Delta_v'\Lambda\Delta_v/2}\notag\\
&\hspace*{60mm} \bone{s < \log\frac{y_1}{y_2}/ \Delta'\Lambda\Delta}
       \,\sqrt{\Delta'_v\Delta_v} \, ds\,d\zeta\, \PI(dv\,d\Lambda)\notag\\
    &= \frac\beta{\delta y_1} e^{-\delta|t_2-t_1|} \int_{\cP^d\times\bbR^d}
       \Phi\Big(\frac{\log\frac{y_1}{y_2}}{S_\Lambda(v)} - \half S_\Lambda(v)\Big)
       \, \PI(dv\,d\Lambda)\notag
\intertext{where $S_\Lambda(v):=\sqrt{\Delta_v'\Lambda\Delta_v}$.
  Similarly,}
\nu(B^*_2) &= \frac\beta{\delta y_2} e^{-\delta|t_2-t_1|}    \int_{\cP^d\times\bbR^d}
       \Phi\Big(\frac{\log\frac{y_2}{y_1}}{S_\Lambda(v)} - \half S_\Lambda(v)\Big)
       \, \PI(dv\,d\Lambda),\text{\quad so}\notag\\
\nu(B_1\cap B_2) &= \nu(B^*_1)+\nu(B^*_2) = \frac\beta\delta e^{-\delta|t_2-t_1|}
          \int_{\cP^d\times\bbR^d}
          \bigg\{
           \frac1{y_1} \Phi\bigg(
                \frac{\log\frac{y_1}{y_2}}
                     {S_\Lambda(v)} - \half S_\Lambda(v)\bigg) +
                  \notag\\ &\hspace*{33mm} +
           \frac1{y_2} \Phi\bigg(
                \frac{\log\frac{y_2}{y_1}} {S_\Lambda(v)}
                       - \half S_\Lambda(v)\bigg)\bigg\}
           \, \PI(dv\,d\Lambda)\notag\\
\intertext{and}
\nu(B_1\cup B_2) &= \frac{\beta}{\delta}
          \Big(\frac1{y_1}+\frac1{y_2}\Big)
                 -\nu(B_1 \cap B_2)\notag\\
      &=  \frac\beta\delta\Big(1-e^{-\delta|t_2-t_1|}\Big)
                   \Big(\frac1{y_1}+\frac1{y_2}\Big)\notag\\
      &+  \frac\beta\delta e^{-\delta|t_2-t_1|}
          \int_{\cP^d\times\bbR^d}\bigg\{
          \frac1{y_1} \Phi\bigg(\frac12 S_\Lambda(v)
                      - \frac{\log\frac{y_1}{y_2}} {S_\Lambda(v)}\bigg) +
                 \notag\\ &\hspace*{33mm} +
          \frac1{y_2} \Phi\bigg(\frac12 S_\Lambda(v)
                      - \frac{\log\frac{y_2}{y_1}} {S_\Lambda(v)}\bigg) \bigg\}
          \, \PI(dv\,d\Lambda).\label{e:nu-Y1-Y2}
\end{align}

\section{Proof of Theorem \ref{thm:stochbound}}
In this section we compute a lower bound for the probability of two
exceedances $Y(x_j,s_j)>y_j,\quad j=1,2$ of levels $\{y_j\}\subset\bbR_+$
at locations $\{x_j\}\subset\bbR^d$ within the interval
$\{s_j\} \subset [0,t]$ for $t>0$.  Some calculations used in the sequel are found
in Section \ref{sec:integrals}.

\subsection{General case}\label{sss:cpa@1+2}
We consider the case in which CPA is attained at both points
during the interval $[0,t]$ for the general max-stable velocity process.  For $j=1,2$ set:
\begin{alignat*}2
  \Delta_j &:= \quad\argmax_{s \in \bbR} &&\set{ \vpL(x_j-\xi-sv) }
             = v'\Lambda(x_j-\xi)/v'\Lambda v
\\
      s^*_j &:= \argmax_{s \in [0,t] \cap [\sigma,\sigma+\tau]}
           && \set{ \vpL(x_j-\xi-(s-\sigma)v) }\\
      u^*_j &:= \frac{y_j}{\vpL(x_j-\xi_{s^*_j})}.
\end{alignat*}
To achieve exceedance and CPA at both locations a particle must satisfy
\begin{align}
      u &> u_1^*\vee u_2^*&
    s^*_j&=\sigma+\Delta_j \in [0,t] \cap [\sigma,\sigma+\tau],
         \quad j=1,2.\label{e:cpa2}
\end{align}
Write the set $A\subset \Omega$ of particles that satisfy these conditions
as the union of four sets
\begin{align}
   A_{ij}:=\{\omega\in \Omega:~ \Delta_i=(\Delta_1\wedge\Delta_2),\qquad 
          u^*_j = (u^*_1 \vee u^*_2)\}\quad i,j=1,2\label{e:4A}
\end{align}
characterized by which CPA occurs first and which exceedance requires the
larger mass $u$.  For example,
\[A_{11}=\set{\omega \in \Omega:~
  u > u_1^* > u_2^*,~
  (0\vee\sigma)<\sigma+\Delta_1<\sigma+\Delta_2< t\wedge(\sigma+\tau)}
\]
consists of the particles that initially approach both $x_1$ and $x_2$ and
reach CPA for both before their death.  Both suprema
$\sup_s [\vpL(x-\xi_s)]$ occur at the CPA times $s^*_j = \sigma+\Delta_j$,
at CPA locations $\xi_{s^*_j} = \xi + \Delta_jv$, so
\begin{align*}
 (x_j - \xi_{s_j^*}) = (x_j-\xi-\Delta_jv) = (x_j -\xi)^{\perp}
\end{align*}
is the projection of $(x_j-\xi)$ onto the orthogonal complement of $v$ for
$j=1,2$.  After integrating wrt $u^{-2} du$, we have
\begin{align}
  \nu(A_{11}) &= \frac{1}{y_1}
   \int_{\substack{\bbR^d
         \times \cP^d
         \times \bbR^d\notag\\
         \times (-\Delta_1 < \sigma < t-\Delta_2)\\
         \times (\Delta_2 < \tau < \infty)}}
    \bone{0<\Delta_1<\Delta_2}
    \bone{u^*_1 > u^*_2} 
    \beta \delta e^{-\delta \tau} \vpL\big((x_1 -\xi)^{\perp}\big)
           d\tau\, d\sigma\, d\xi\, \PI(dv\,d\Lambda)\notag
\intertext{Integrating with respect to $\tau$ then $\sigma$,}
    &= \frac{\beta}{y_1}
            \int_{\bbR^d \times \cP^d \times \bbR^d}
      (t-(\Delta_2-\Delta_1))\,e^{-\delta \Delta_2} \vpL((x_1-\xi)^{\perp})
    \quad\times\notag\\&\hspace*{30mm}
      \bone{0<\Delta_1<\Delta_2}\bone{\Delta_2-\Delta_1\le t}
      \bone{u^*_1 > u^*_2} d\xi\, \PI(dv\, d\Lambda).\label{e:xi-Lam-v}
\end{align}
Change variables from $\xi\in\bbR^d$ to $(\zeta,\gamma)\in v^\perp\times
v^\|$ with
\[ \zeta  :=\big(\frac{x_1+x_2}2-\xi\big)^\perp,\qquad
   \gamma :=\big(\frac{x_1+x_2}2-\xi\big)^\|
\]
and introduce the quantity
\begin{align*}
\Delta_{12} &:= (\Delta_2-\Delta_1) = v'\Lambda(x_2-x_1)/v'\Lambda v,
\end{align*}
noting that it doesn't depend on $\xi$.  With this we can write
\begin{align*}
    \Delta_1&= -\half\Delta_{12}+v'\Lambda\gamma/v'\Lambda v&
    \Delta_2&= +\half\Delta_{12}+v'\Lambda\gamma/v'\Lambda v.
\end{align*}
Introduce $\mu:=\half(x_2-x_1)^\perp$ and note that
\begin{subequations}\label{e:zeta+diff}
\begin{alignat}2
\vpL(x_1-\xi_{s^*_j})
     &= \vpL\big((x_1-\xi)^\perp\big)&
     &= \vpL(\zeta-\mu)\\
\vpL(x_2-\xi_{s^*_j})
     &= \vpL\big((x_2-\xi)^\perp\big)&
     &= \vpL(\zeta+\mu)
\end{alignat}
\end{subequations}

The limits imposed by the indicator functions in \eqref{e:xi-Lam-v} are:
\par\centerline{\begin{tabular}{CCC}
0\le \Delta_2-\Delta_1\le t&\Leftrightarrow
                     &0\le v'\Lambda(x_2-x_1)\le t v'\Lambda v\\
0\le \Delta_1        &\Leftrightarrow
                     &\Delta_{12} v'\Lambda v \le 2 v'\Lambda\gamma\\
 u_2^* \le u_1^*     &\Leftrightarrow
                     &\vpL(\zeta-\mu)/\vpL(\zeta+\mu) \le y_1/y_2
\end{tabular}}
Rewriting \eqref{e:xi-Lam-v} with this variable change, then integrating
wrt $\gamma$, gives

\begin{align}
  \nu(A_{11})
    &= \frac{\beta}{y_1}
            \int_{\bbR^d \times \cP^d \times v^\perp\times v^\|}
      (t-\Delta_{12})\,e^{-\delta \Delta_{12}/2-\delta
      v'\Lambda\gamma/v'\Lambda v} \vpL(\zeta-\mu)
    \quad\times\notag\\&\hspace*{10mm}
      \bone{0\le v'\Lambda(x_2-x_1)\le t\,v'\Lambda v}
      \bone{\Delta_{12}v'\Lambda v\le 2v'\Lambda\gamma}
      \bone{\vpL(\zeta-\mu)/\vpL(\zeta+\mu) \le y_1/y_2}\,
      d\gamma\,d\zeta\, \PI(dv\, d\Lambda)\notag\\
    &= \frac{\beta}{\delta y_1}
            \int_{\bbR^d \times \cP^d}
      (t-\Delta_{12})\,e^{-\delta \Delta_{12}}
      \bone{0\le v'\Lambda(x_2-x_1)\le t\,v'\Lambda v}
    \quad\times\notag\\&\hspace*{10mm}
\set{ \int_{\zeta\in v^\perp:~\frac{\vpL(\zeta-\mu)}{\vpL(\zeta+\mu)} \le\frac{y_1}{y_2}}
      \vpL(\zeta-\mu)\,d\zeta}
      \,|v|\, \PI(dv\, d\Lambda).\label{e:gam-zet-Lam-v}
\end{align}

Now, we find the measure of the other three sets.  First
\[A_{21}=\set{\omega\in\Omega:~
  u > u_2^* > u_1^*,~
  (0\vee\sigma)<\sigma+\Delta_1<\sigma+\Delta_2< t\wedge(\sigma+\tau)},
\]
giving 
\begin{align}
  \nu(A_{21})
    &= \frac{\beta}{\delta y_2}
            \int_{\bbR^d \times \cP^d}
      (t-\Delta_{12})\,e^{-\delta \Delta_{12}}
      \bone{0\le v'\Lambda(x_2-x_1)\le t\,v'\Lambda v}
    \quad\times\notag\\&\hspace*{10mm}
\set{ \int_{\zeta\in v^\perp:~\frac{\vpL(\zeta-\mu)}{\vpL(\zeta+\mu)} \ge\frac{y_1}{y_2}}
      \vpL(\zeta+\mu)\,d\zeta}
      \,|v|\, \PI(dv\, d\Lambda).
\end{align}
Finally, it is clear that with
\be
A_{12} &=\set{\omega \in \Omega:~
  u > u_1^* > u_2^*,~
  (0\vee\sigma)<\sigma+\Delta_2<\sigma+\Delta_1< t\wedge(\sigma+\tau)} \\
A_{22} &=\set{\omega \in\Omega:~
  u > u_2^* > u_1^*,~
  (0\vee\sigma)<\sigma+\Delta_2<\sigma+\Delta_1< t\wedge(\sigma+\tau)}  
\ee
we have
\be
  \nu(A_{12})
    &= \frac{\beta}{\delta y_1}
            \int_{\bbR^d \times \cP^d}
      (t+\Delta_{12})\,e^{\delta \Delta_{12}}
      \bone{0\le -v'\Lambda(x_2-x_1)\le t\,v'\Lambda v}
    \quad\times\notag\\&\hspace*{10mm}
\set{ \int_{\zeta\in v^\perp:~\frac{\vpL(\zeta-\mu)}{\vpL(\zeta+\mu)}
    \le\frac{y_1}{y_2}}
      \vpL(\zeta-\mu)\,d\zeta}
      \,|v|\, \PI(dv\, d\Lambda).  \\
\nu(A_{22})
    &= \frac{\beta}{\delta y_2}
            \int_{\bbR^d \times \cP^d}
      (t+\Delta_{12})\,e^{\delta \Delta_{12}}
      \bone{0\le -v'\Lambda(x_2-x_1)\le t\,v'\Lambda v}
    \quad\times\notag\\&\hspace*{10mm}
\set{ \int_{\zeta\in v^\perp:~\frac{\vpL(\zeta-\mu)}{\vpL(\zeta+\mu)}
    \ge\frac{y_1}{y_2}}
      \vpL(\zeta+\mu)\,d\zeta}
      \,|v|\, \PI(dv\, d\Lambda).
\ee

\subsection{Gaussian case}
Now take $\varphi(z)=(2\pi)^{-d/2}\exp(-z'z/2)$ and fix
$v,\Lambda\in\bbR^d\times\cP^d$.  The set of $\zeta\in v^\perp$ over which
the bracketed integral in \eqref{e:gam-zet-Lam-v} is taken can be written
as:
\begin{align*}
 \vpL(\zeta-\mu) &\le \frac{y_1}{y_2} \vpL\big(\zeta+\mu)\\
  - \half (\zeta-\mu)'\Lambda(\zeta-\mu) &\le \log\frac{y_1}{y_2} 
  - \half (\zeta+\mu)'\Lambda(\zeta+\mu)\\
 \mu' \Lambda \zeta  &\le \frac{1}{2} \log\frac{y_1}{y_2}
\end{align*}
Writing $\zeta\in v^{\perp}$ as the sum $\zeta=\zeta_\perp+\zeta_\|$ of
components orthogonal and parallel to $(x_2-x_1)^\perp$ (in the $\Lambda$
metric), by \eqref{eq:gaussperp},
\be
  \nu(A_{11})
    &= \frac{\beta}{y_1}
            \int_{\substack{\bbR^d \times \cP^d\\
            0\le \Delta_{12}\le t}}
      (t-\Delta_{12})\,e^{-\delta \Delta_{12}}
      \sqrt{v'\Lambda v/2\pi} 
     \Phi\Big(- \frac{S_{\Lambda}^{\perp}(v)}{2} +
     \frac{\log(y_1/y_2)}{S_{\Lambda}^{\perp}(v)}  
        \Big)
    \, \PI(dv\, d\Lambda).
\ee
with $S^{\perp}_{\Lambda}(v) := \{(x_2-x_1)' \Lambda (x_2-x_1)^{\perp}
\}^{1/2}$.  The other three sets have measure 
\be
  \nu(A_{21})
    &= \frac{\beta}{y_2}
            \int_{\substack{\bbR^d \times \cP^d\\
            0\le \Delta_{12}\le t}}
      (t-\Delta_{12})\,e^{-\delta \Delta_{12}}
      \sqrt{v'\Lambda v/2\pi} 
     \Phi\Big( -\frac{S^{\perp}_{\Lambda}(v)}{2} -
     \frac{\log(y_1/y_2)}{S^{\perp}_{\Lambda}(v)}  
        \Big)
    \, \PI(dv\, d\Lambda).  \\
  \nu(A_{12})
    &= \frac{\beta}{y_1}
            \int_{\substack{\bbR^d \times \cP^d\\
            0\le -\Delta_{12}\le t}}
      (t+\Delta_{12})\,e^{\delta \Delta_{12}}
      \sqrt{v'\Lambda v/2\pi} 
     \Phi\Big( -\frac{S^{\perp}_{\Lambda}(v)}{2} +
     \frac{\log(y_1/y_2)}{S^{\perp}_{\Lambda}(v)} 
        \Big)
    \, \PI(dv\, d\Lambda).  \\    
 \nu(A_{22})
    &= \frac{\beta}{y_2}
            \int_{\substack{\bbR^d \times \cP^d\\
            0\le -\Delta_{12}\le t}}
      (t+\Delta_{12})\,e^{\delta \Delta_{12}}
      \sqrt{v'\Lambda v/2\pi} 
     \Phi\Big( -\frac{S^{\perp}_{\Lambda}(v)}{2} -
     \frac{\log(y_1/y_2)}{S^{\perp}_{\Lambda}(v)} 
        \Big)
    \, \PI(dv\, d\Lambda).  
\ee
Recognizing a simple change of variables, we have
\be
\nu(A) =  \frac{\beta}{\sqrt{2\pi}} \int_{\substack{\bbR^d \times \cP^d\\
            \cap \{-t \le \Delta_{12}\le t\}}}
      &(t-\Delta_{12})\,e^{-\delta \Delta_{12}}
     \bigg\{ \frac{1}{y_1} \Phi\bigg( -\frac{S^{\perp}_{\Lambda}(v)}{2}
     +\frac{\log(y_1/y_2)}{S^{\perp}_{\Lambda}(v)} 
        \bigg) \\
    &+ \frac{1}{y_2} \Phi\bigg( -\frac{S^{\perp}_{\Lambda}(v)}{2}
        -\frac{\log(y_1/y_2)}{S^{\perp}_{\Lambda}(v)} 
        \bigg) \bigg\} \sqrt{v'\Lambda v} \PI(dv\, d\Lambda) 
\ee
where $A = A_{11} \cup A_{12} \cup A_{21} \cup A_{22}$.  So
\begin{align*}
  \P[|\kappa_2-\kappa_1| > t] 
            &\le 1-\P[\kappa_1\vee\kappa_2\le t] \\ &
           \le 1-\P[\text{ CPA Exceedances at $x_1$, $x_2$ in $[0,t]$ }]\\
            &\le \exp\big\{-\nu(A)  \big\},
\end{align*}
so the probability $\P[|\kappa_2-\kappa_1| > t] \to 0$ as
$\E_\PI\left[ \sqrt{v'\Lambda v}
  \Phi\left(-\sqrt{(x_2-x_1)' \Lambda (x_2-x_1)^{\perp}}\right) \right] \to
\infty$.

\section{Proof of Corollary \ref{cor:MinZ}}
Any point $\omega \in A$ for the set $A$ in Theorem \ref{thm:stochbound} is in 
either
\be
B_1 := \{ \omega : Z(x_1,x_2) < t \}, \text{ or }
B_2 := \{ \omega : Z(x_2,x_1) < t \}. 
\ee
Letting $A_1 = A \cap B_1$, $A_2 = A \cap B_2$, we have $A = A_1 \cup A_2$, and 
therefore 
\be
\nu(A) \le \nu(A_1) + \nu(A_2),
\ee
so either $\nu(A_1) > \frac12 \nu(A)$ or $\nu(A_2) > \frac12 \nu(A)$. 

\section{Some Important Integrals} \label{sec:integrals}
\par\bigskip
As before, fix $v,x_1,x_2,\xi\in\bbR^d$ and $\Lambda\in\cP^d$, and set
\[\Delta_j:=v'\Lambda(x_i-\xi)\qquad
  \Delta_{12}:=(\Delta_2-\Delta_1)=v'\Lambda(x_2-x_1)\qquad
  \mu := \half(x_2-x_1)^\perp
\]
where, as before, for any $z\in\bbR^d$ we denote the projections of $z$
parallel and orthogonal to $v$ in the $\Lambda$ metric by
\[ z^\|:=(v'\Lambda z/v'\Lambda v)v \qquad z^\perp = z-z^\|
\]
and the value of the kernel function at $z$ by
\[ \vpL(z):=|\Lambda/2\pi|^\half\,\exp\big(-z'\Lambda z/2\big)
\]

\par\bigskip Then
\begin{align}
\int_{\bbR^d}\exp\big(-\half\xi'\Lambda\xi\big)\,d\xi 
  &= |\Lambda/2\pi|^\half\label{e:Rd}\\
\int_{v^\|}  \exp\big(-\half\gamma'\Lambda\gamma\big)\,d\gamma
  &=\int_{\bbR} \exp\big(-s^2v'\Lambda v/2\big)\,|v|\,ds\notag
\intertext{with the CoV $\gamma=sv$ with Jacobian $d\gamma=\sqrt{v'v}\,ds$}
  &= (v'\Lambda v/2\pi)^{-\half}\,|v|\label{e:para-v}\\
\int_{v^\perp}  \exp\big(-\half\zeta'\Lambda\zeta\big)\,d\zeta
  &= \text{the ratio of \eqref{e:Rd}/\eqref{e:para-v}}\notag\\
  &= |\Lambda/2\pi|^\half(v'\Lambda v/2\pi)^{\half}/|v|, \text{ so}\label{e:perp-v}\\
\int_{v^\perp}  \vpL(\zeta)\,d\zeta
  &= (v'\Lambda v/2\pi)^{\half}/|v|.\label{e:vpL-vperp}\\
\int_{\mu^\|}  \exp\big(-\half w'\Lambda w\big)\,dw
  &=\int_{\bbR} \exp\big(-s^2\mu'\Lambda \mu/2\big)\,|\mu|\,ds\notag\\
  &= (\mu'\Lambda \mu/2\pi)^{-\half}\,|\mu|\label{e:para-mu}\\
\int_{\{v,\mu\}^\perp} \exp\big(-\half q'\Lambda q \big)\,d q
  &= \text{the ratio of \eqref{e:perp-v}/\eqref{e:para-mu}}\notag\\
  &= |\Lambda/2\pi|^\half
     (v'\Lambda v~\mu'\Lambda\mu)^{\half}/2\pi|v|~|\mu|, \text{ so}\notag\\
\int_{\{v,\mu\}^\perp}  \vpL(\zeta)\,d\zeta
  &= (v'\Lambda v~\mu'\Lambda\mu)^{\half}/2\pi|v|~|\mu|.\label{e:v-mu-perp}
\end{align}
If the integral in \eqref{e:para-mu} extends only over those $\zeta\in\mu^\|$
with $\mu'\Lambda\zeta \le \half\log(y_1/y_2)$, its value is reduced by a
factor of
$\Phi\big(\frac{\log(y_1/y_2)-2\mu'\Lambda\mu}
               {2\sqrt{\mu'\Lambda\mu}}\big)$, leading to
\begin{align}
\int_{v^\perp}  \vpL(\zeta-\mu)\,
    \bone{\mu'\Lambda\zeta\ge\half\log(y_1/y_2)}\,d\zeta 
   &= \Phi\Big(\frac{\half\log\frac{y_1}{y_2}-\mu'\Lambda\mu}
                    {\sqrt{\mu'\Lambda\mu}}\Big)
      \Big(\frac{v'\Lambda v}{2\pi v'v}\Big)^\half.  \label{eq:gaussperp}
\end{align}

\section{Additional figures}

\begin{figure}
\captionof{table}{Key indicating identity of currencies in Figure 
\ref{fig:exchrate}.} \label{tab:currency}
\begin{footnotesize}
\begin{center}
\renewcommand{\arraystretch}{.75}
 \begin{tabular}{ccc|ccc}
  col./row \# & symbol & name & col./row \# & symbol & name \\
  \hline
1 & AUD & Australian Dollar & 7 & NLG & Dutch Guilder \\
2 & BEF & Belgian Franc & 8 & NZD & New Zealand Dollar \\
3 & CAD & Canadian Dollar & 9 & ESP & Spanish Peseta \\
4 & FRF & French Franc & 10 & SEK & Swedish Kroner \\
5 & DEM & German Deutschmark & 11 & CHF & Swiss Franc \\
6 & JPY & Japanese Yen & 12 & GBP & British Pound \\
\end{tabular}
\end{center}
\end{footnotesize}
\end{figure}

\begin{figure}
\captionof{table}{Key indicating identity of stocks in the images in Figure 
\ref{fig:djia}} \label{tab:djiacomp}
\begin{center}
\begin{footnotesize}
\renewcommand{\arraystretch}{.75}
 \begin{tabular}{ccc|ccc}
  col./row \# & symbol & name  & col./row \# & symbol & name \\
  \hline
1 & axp & American Express & 16 & mcd & McDonald's \\
2 & ba & Boeing & 17 & mmm & 3M \\
3 & cat & Caterpillar & 18 & mrk & Merck \\
4 & csco & Cisco Systems & 19 & msft & Microsoft \\
5 & cvx & Chevron & 20 & nke & Nike \\
6 & dd & DuPont & 21 & pfe & Pfizer \\ 
7 & dis & Disney & 22 & pg & Proctor \& Gamble \\
8 & ge & General Electric & 23 & t & AT\&T \\
9 & gs & Goldman Sachs & 24 & trv & Travelers \\
10 & hd & Home Depot & 25 & unh & United Healthcare \\
11 & ibm & IBM & 26 & utx & United Technologies \\
12 & intc & Intel & 27 & v & Visa \\
13 & jnj & Johnson \& Johnson & 28 & vz & Verizon \\
14 & jpm & J.P. Morgan Chase & 29 & wmt & Wal-Mart \\ 
15 & ko & Coca-Cola & 30 & xom & Exxon-Mobil \\ 
\end{tabular}
\end{footnotesize}
\end{center}
\end{figure}

\begin{figure}
\centering
\includegraphics[width=0.6\textwidth]{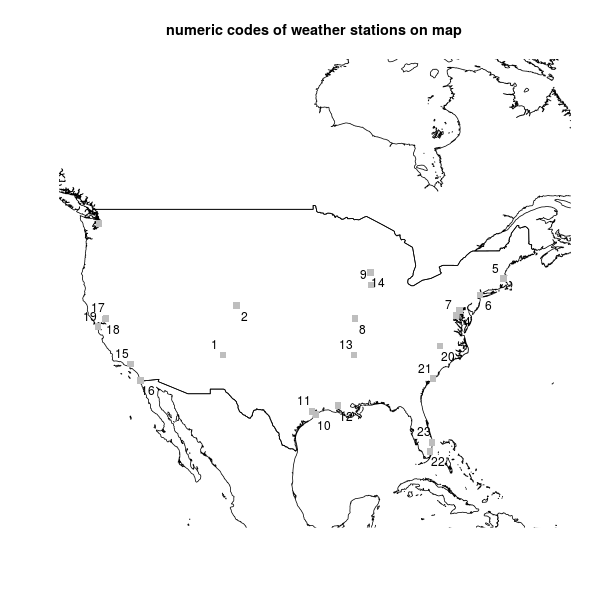}
\caption{Map with numbers labeling locations of weather stations; the numbers correspond to the order in which the stations appear in the colormap images in Figure \ref{fig:weather}.} \label{fig:weathermap}
\end{figure}

\FloatBarrier
\end{appendix}

\bibliographystyle{apalike}
\bibliography{mex-arXiv}

\vfill{\tiny{last edited:\today}}

\end{document}